\numberwithin{equation}{section}
\newtheorem{theorem}{Theorem}[section]
\newtheorem{proposition}[theorem]{Proposition}
\newtheorem{corollary}[theorem]{Corollary}
\newtheorem{lemma}[theorem]{Lemma}
\theoremstyle{definition}
\newtheorem{definition}[theorem]{Definition}
\newtheorem{remark}[theorem]{Remark}
\newcommand{\R}{\mathbb{R}}
\newcommand{\supp}{{\rm supp}{\hspace{.05cm}}}
\begin{document}
\title
 [Normalized solutions for $p$-Laplacian equations with log nonlinearity]
 {Existence and concentration of normalized solutions for $p$-Laplacian equations with logarithmic nonlinearity}

\author[L.J. Shen]{Liejun Shen}

\address{Liejun Shen, \newline\indent
Department of Mathematics, \newline\indent
Zhejiang Normal University, \newline\indent
 Jinhua, Zhejiang, 321004, People's Republic of China}
\email{\href{mailto:ljshen@zjnu.edu.cn.}{ljshen@zjnu.edu.cn.}}

 \author[M.\ Squassina]{Marco Squassina}

 \address{Marco Squassina, \newline\indent
 	Dipartimento di Matematica e Fisica \newline\indent
 	Universit\`a Cattolica del Sacro Cuore, \newline\indent
 	Via della Garzetta 48, 25133, Brescia, Italy}
 \email{\href{mailto:marco.squassina@unicatt.it.}{marco.squassina@unicatt.it.}}

\subjclass[2010]{35J10, 35J20, 35B06}
\keywords{Normalized solutions, Logarithmic $p$-Laplacian equation, Multiplicity, Singularly perturbed,
Minimization technique, Variational method.}

\thanks{L.J. Shen is partially supported by NSFC (12201565).
 M.\  Squassina  is  member  of  Gruppo  Nazionale  per
 	l'Analisi  Matematica,  la Probabilita  e  le  loro  Applicazioni  (GNAMPA)  of  the  Istituto  Nazionale  di  Alta  Matematica  (INdAM)}

\begin{abstract}
We investigate the existence and concentration of normalized solutions for a $p$-Laplacian
problem with logarithmic nonlinearity of type
\[
 \left\{
   \begin{array}{ll}
   \displaystyle -\varepsilon^p\Delta_p u+V(x)|u|^{p-2}u=\lambda |u|^{p-2}u+|u|^{p-2}u\log|u|^p
~\text{in}~\R^N, \\
    \displaystyle   \int_{\R^N}|u|^pdx=a^p\varepsilon^N,
   \end{array}
 \right.
\]
where $a,\varepsilon> 0$, $\lambda\in\R$ is known as the Lagrange multiplier, $\Delta_p\cdot =\text{div} (|\nabla \cdot|^{p-2}\nabla \cdot)$
 denotes the usual
$p$-Laplacian operator with $2\leq p < N$ and $V \in \mathcal{C}^0(\R^N)$ is the potential which satisfies some suitable
assumptions. We prove that the number of positive solutions depends on the profile of $V$ and
each solution concentrates around its corresponding global minimum point of $V$ in the semiclassical
limit when $\varepsilon\to0^+$ using variational method.
 Moreover, we also get the existence of normalized solutions
for some logarithmic $p$-Laplacian equations involving mass-supercritical nonlinearities.
\end{abstract}
\maketitle

\begin{center}
	\begin{minipage}{8.5cm}
		\small
		\tableofcontents
	\end{minipage}
\end{center}
\medskip

\section{Introduction and main results}

In this article, we aim to establish the existence
and concentrating behavior of nontrivial solutions for the following $p$-Laplacian equations
with logarithmic nonlinearity of type
\begin{equation}\label{mainequation1}
-\varepsilon^p\Delta_p u+V(x)|u|^{p-2}u=\lambda |u|^{p-2}u+|u|^{p-2}u\log|u|^p
~\text{in}~\R^N,
\end{equation}
under the constraint
\begin{equation}\label{mainequation1a}
\int_{\R^N}|u|^pdx=a^p\varepsilon^N,
\end{equation}
where $a,\varepsilon> 0$, $\lambda\in\R$ is known as the Lagrange multiplier, $\Delta_p\cdot=\text{div} (|\nabla \cdot|^{p-2}\nabla \cdot)$
 denotes the usual
$p$-Laplacian operator with $2\leq  p < N$ and $V \in \mathcal{C}^0(\R^N)$ is the potential
which satisfies
\begin{itemize}
  \item[$(V_1)$] $V\in \mathcal{C}^0(\mathbb{R}^N)$  and $-\infty<V_0=
  \inf\limits_{x\in\mathbb{R}^N}V(x)<V_\infty=\lim\limits_{|x|\to+\infty}V(x)<+\infty$;
  \item[$(V_2)$] $V^{-1}(\{V_{0}\})=\{x^1,x^2,\ldots,x^l\}$ with $x^1=0$ and $x^i\ne x^j$ if $i\ne j$
  for all $i,j\in\{1,2,\cdots,l\}$.
\end{itemize}

In the case $p=2$, Eq. \eqref{mainequation1} also comes from the study of solitary waves for the following nonlinear
Schr\"{o}dinger equation
\begin{equation}\label{Introduction1}
\text{i}\dfrac{\partial\psi}{\partial t}+\Delta\psi-V(x)\psi+h(|\psi|^2)\psi=0~\text{in}~(0,\infty)\times\mathbb{R}^N,
\end{equation}
where i is the imaginary unit, $\psi:(0,\infty)\times\mathbb{R}^{N}\rightarrow\mathbb{C}$,
$V:\R^N\to\R$ is the potential,
$h(\mathrm{e}^\mathrm{i\theta}z)=\mathrm{e}^{\mathrm{i}\theta}h(z)$
for $z\in\mathbb{C}$ and $\theta\in\mathbb{R}$.
It is not difficult to see that any solution $\psi$ of Eq. \eqref{Introduction1} with the Cauchy initial
function $\psi(0, x)$ preserves the $L^2$-mass, namely
$$
\int_{\mathbb{R}^N}|\psi(t,x)|^2\mathrm{d}x=\int_{\mathbb{R}^N}|\psi(0,x)|^2\mathrm{d}x,~\forall t\in(0,\infty).
$$
Actually, Eq. \eqref{Introduction1} is usually adopted in the study of nonlinear optics and Bose-Einstein condensates,
 where $\psi$ describes the state of the condensate and the $L^2$-mass is the total number of atoms,
 see e.g. \cite{Fibich,TVZ,Zhang}. One significant motivation associated with Eq. \eqref{Introduction1} is the search for its standing wave solutions.
The standing wave is a solution of the form $\psi(t,x) =e^{-\text{i}\lambda t}u(x)$,
where $\lambda\in \mathbb{R}$ and $u: \mathbb{R} ^N\to \mathbb{R}$ is a time-independent function that satisfies
the nonlinear elliptic equation
\begin{equation}\label{Introduction2}
- \Delta u+ V( x) u= \lambda u+ g(u)~\text{in}~ \mathbb{R}^N,
\end{equation}
with $g(u)=h(|u|^2)u$.

Usually, there are two directions
 to the study of standing waves of the Schr\"{o}dinger equation
\eqref{Introduction2}. On the one hand, one can choose the frequency $\lambda$ to be fixed and look for the
existence of nontrivial solutions for it by investigating critical points of the variational functional
$J_\lambda:H^1(\R^N)\to\R$ defined by
$$
\mathcal{J}_\lambda(u)=\frac{1}{2}\int_{\mathbb{R}^N}\left(|\nabla u|^2+(V(x)-\lambda)|u|^2\right)dx-\int_{\mathbb{R}^N}G(u)dx,
$$
where $G(t)=\int_0^tg(s)ds$.
When $g(t)=t\log t^2$, we refer to the article \cite{Zloshchastiev}
and its references therein to acquaint the
 significant physical
applications in quantum mechanics, quantum optics, nuclear physics, transport and diffusion
phenomena, open quantum systems, effective quantum gravity, superfluidity theory
and Bose-Einstein condensation for Eq. \eqref{Introduction2}.
Owing to the logarithmic type nonlinearity,
it presents some challenging mathematical problems. For instance, the
associated variational functional is not $\mathcal{C}^1$-smooth
since one can find a function below
$$
\left.u(x)=\left\{\begin{array}{ll}(|x|^{N/2}\log(|x|))^{-1},&\quad|x|\geq3,\\0,&\quad|x|\leq2,\end{array}\right.\right.
$$
such that $u\in H^1(\R^N)$, however it holds that $\int_{\R^N}u^2\log u^2dx=-\infty$.

The initial work to deal with this difficulty is due to Cazenave in \cite{Cazenave},
where the author considered the following
 logarithmic Schr\"{o}dinger equation
$$
\text{i}u_t+\Delta u
 +u\log u^2=0,~(t,x)\in\mathbb{R}\times\mathbb{R}^N,
$$
in the space $W\triangleq\{u\in H^1(\R^N):\int_{\R^N}|u^2\log u^2|dx<\infty\}$
with a suitable Luxemburg norm. Speaking it more clearly, by introducing the $N$-function
\begin{equation}\label{AA}
 \left.A(s)=\left\{\begin{array}{ll} -s^2\log s^2,&\quad0\leq s\leq e^{-3},\\
  3s^2+4e^{-3}s-e^{-6},
&\quad s\geq e^{-3},\end{array}\right.\right.
\end{equation}
$$
\|u\|_A=\inf\left\{\lambda>0:\int_{\R^N}A\left(\frac{|u|}{\lambda}\right)dx\leq1\right\},
$$
and $(\|\cdot\|_{H^1(\R^N)}+\|\cdot\|_A)$ as the Luxemburg norm,
then the author obtained the existence of infinitely many critical
points for the variational functional
$$
\mathcal{J} (u)=\frac{1}{2}\int_{\mathbb{R}^N} |\nabla u|^2 dx-\int_{\mathbb{R}^N}u^2\log u^2dx, ~u\in W,
$$
on the set $\Sigma\triangleq\{u\in W:\int_{\mathbb{R}^N}u^2dx=1\}$.
Very recently, Alves and his collaborators  \cite{Alves1,Alves2,Alves3}
have used the decomposition
\begin{equation}\label{AJdecomposition}
\hat{F}_2(s)-\hat{F}_1(s)=\frac{1}{2}s^2\log s^2,~ s\in\mathbb{R},
\end{equation}
where,
$$
\left.\hat{F}_1 (s)=\left
\{\begin{array}{lr}
0, & s\leq0,\\
-\frac{1}{2}s^2\log s^2, &0<s< \delta,\\
 -\frac{1}{2}s^2\big[\log \delta ^2+3\big]+2\delta s -\frac{1}{2} \delta ^2,&s\geq \delta,
\end{array}\right.\right.
 $$
and
$$
\left.\hat{F}_2(s)=\left\{\begin{array}{ll}0,&|s|\le \delta,\\
\frac{1}{2}|s|^2\log\big(|s|^2/ \delta ^2\big)+2\delta|s| -\frac{3}{2}|s|^2-\frac{1}{2} \delta ^2,&|s|\ge \delta,
\end{array}\right.\right.
$$
for some sufficiently small $\delta>0$, and introduced the Orlicz space
$$
L^{\hat{F}_1}(\R^N)=\left\{u\in L^1_{\text{loc}}(\R^N):\int_{\R^N}\hat{F}_1
\left(\frac{|u|}{\lambda}\right)dx<+\infty ~\text{for some}~\lambda>0\right\},
$$
then studied some different types of
logarithmic Schr\"{o}dinger equation
in the space $H^1(\R^N)\cap L^{\tilde{F}_1}(\R^N)$.
Of course, there are various techniques in the literature,
see e.g. \cite{AJ1,AJ2,AJ3,d'Avenia,zenari,Ferriere,Ikoma,Shuai,Squassina,Mederski,WangZhang} and the references therein even if
these ones are far to be exhaustive,
to find some other interesting works on logarithmic Schr\"{o}dinger equations.

On the other hand, one can deal with the case $\lambda\in\R$ is unknown. In this situation, $\lambda\in\R$
appears as a Lagrange multiplier and the $L^2$-norm of solution is prescribed. From the physical
point of view,
the research holds particular significance as it accounts for the
conservation of mass. Additionally, it provides valuable insights into the dynamic properties
of standing waves in Eq. \eqref{Introduction2}, such as stability or instability \cite{BC,CP}.
In \cite{Jeanjean1997}, combining a minimax approach and compactness argument,
  Jenajean contemplated the existence of solutions for the following Schr\"{o}dinger
 problem
 \begin{equation}\label{Jeanjean}
 \left\{
   \begin{array}{ll}
 \displaystyle   -\Delta u+\lambda u=
 g(u) ~\text{in}~ \R^N, \\
 \displaystyle     \int_{\R^N}|u|^2dx=a^2>0.
   \end{array}
 \right.
\end{equation}
 Later on, there are some complements and generalizations in \cite{JeanjeanLu2}.
 In \cite{Soave1}, for $g(t)=\mu|t|^{q-2}t+|t|^{p-2}t$ with $2<q\leq 2+\frac4N\leq p<2^*$,
  Soave considered the existence of solutions for problem \eqref{Jeanjean},
  where $2^*=\frac{2N}{N-2}$ if $N\geq3$ and $2^*=\infty$ if $N=2$.
  For this type of combined nonlinearities, Soave \cite{Soave2} proved
the existence of ground state and excited solutions when $p=2^*$.
For more interesting results for problem \eqref{Jeanjean},
we refer the reader to \cite{BartschSoave,JeanjeanLu1,JeanjeanLu3,LiXinfu,WeiWu} and the references therein.

In reality, the $p$-Laplacian operator in Eq. \eqref{mainequation1} appears in many nonlinear problems,
see \cite{Diaz,Gilbarg,Mastorakis} for example,
in fluid dynamics, the shear stress $\vec{\tau}$ combined with the velocity gradient
$\nabla u$ of the fluid are corresponding to the manner that
$$
 \vec{\tau}=r|\nabla u|^{p-2}\nabla u.
 $$
The fluid is dilatant, pseudoplastic or Newtonian when $p > 2$, $p < 2$ or $p = 2$, respectively.
Therefore, the equation governing the motion of the fluid includes the $p$-Laplacian.
Moreover, such an
operator also appears in the study of flows through porous media (when $p = 3/2$),
nonlinear elasticity (when $p \geq2$) and glaciology (when $p\in(1, 4/3]$).

In light of the physical background of $p$-Laplacian operator,
motivated by the prescribed $L^2$-norm solution
for Eq. \eqref{Introduction2},
 it is interesting to investigate the
$p$-Laplacian equation under the $L^p$-mass constraint
and regard the frequency $\lambda\in\R$ as a Lagrange multiplier.
Up to the best knowledge of us,
 there exist very few articles on this topic.
 Wang \emph{et al.} \cite{WLZL} established the existence of solutions
 for the problem
$$
\left\{
   \begin{array}{ll}
 \displaystyle -\Delta_pu+|u|^{p-2}u=\lambda u+|u|^{s-1}u~\text{in}~\mathbb{R}^N, \\
 \displaystyle     \int_{\R^N}|u|^2dx=a^2>0.
   \end{array}
 \right.
$$
when $a>0$ is sufficiently small,
where $\max\left\{1,\frac{2N}{N+2}\right\}<p<N$ and $s\in\left(\frac{N+2}Np,p^*\right)$
with $p^*=\frac{Np}{N-p}$.
In \cite{ZZ}, the authors studied the $p$-Laplacian equation with a $L^p$-norm constraint
$$
\begin{cases}-\Delta_pu=\lambda|u|^{p-2}u+\mu|u|^{q-2}u+g(u)~\text{in}~\mathbb{R}^N,\\
\displaystyle\int_{\mathbb{R}^N}|u|^p\mathrm{d}x=a^p,\end{cases}
$$
where $1<p<q\leq \bar{p}\triangleq p+\frac{p^2}N$
and $g\in \mathcal{C}^0(\R,\R)$ is odd and $L^p$-supercritical.
For the suitable $\mu$, they obtained several existence results
including the existence of infinitely many solutions
by Schwarz rearrangement technique, Ekeland variational principle
and the Fountain theorem. There exist some other similar results
for the $p$-Laplacian equation prescribed a $L^p$-norm, see e.g.
\cite{DW1,DW2,ZLL}.

Whereas, as far as we are concerned, there seems no related results
for the $p$-Laplacian equation with a logarithmic nonlinearity under the $L^p$-mass constraint,
and so one of the aims in the present article is to fulfill the blank.
Motivated by \cite{Alves,Alves1,AT}, we shall derive the multiplicity
and concentrating behavior of positive solutions for a
logarithmic $p$-Laplacian equation \eqref{mainequation1}
under the constraint \eqref{mainequation1a}.

Now, we can state the first main result as follows.

\begin{theorem}\label{maintheorem1} Let $2\leq p<N$
and $(V_1)-(V_2)$. Then, there exists a $\varepsilon^*>0$ such that
\eqref{mainequation1}-\eqref{mainequation1a} possesses at least $l$ different couples of weak
solutions $(u^j_\varepsilon,\lambda^j_\varepsilon)\in W^{1,p}(\R^N)\times\R$
for all $\varepsilon\in(0,\varepsilon^*)$ with $u^j_\varepsilon(x)>0$ for every $x\in\R^N$ and
 $\lambda^j_\varepsilon<0$, where $j\in\{1,2,\cdots,l\}$.
Moreover, each $u_j^\varepsilon$ has a maximum point $z^j_\varepsilon\in\R^N$ such that $V(z^j_\varepsilon)\to V(x^j)=V_0$
as $\varepsilon\to0^+$. Besides, there exist two constants $C_0^j>0$ and $c_0^j>0$ satisfying
$$
u^j_\varepsilon\leq  C_0^j\exp \bigg(-c_0^j\frac {|x-z^j_\varepsilon|}{\varepsilon}\bigg)
$$
for all $\varepsilon\in(0,\varepsilon^*)$ and $x\in\R^N$.
\end{theorem}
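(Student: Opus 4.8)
The plan is to remove the singular perturbation parameter by rescaling. Setting $v(x)=u(\varepsilon x)$ turns \eqref{mainequation1}--\eqref{mainequation1a} into the fixed-mass problem
\begin{equation*}
-\Delta_p v+V(\varepsilon x)|v|^{p-2}v=\lambda|v|^{p-2}v+|v|^{p-2}v\log|v|^p\ \text{in}\ \R^N,\qquad \int_{\R^N}|v|^p\,dx=a^p,
\end{equation*}
whose solutions are constrained critical points on $S_a=\{v\in W^{1,p}(\R^N)\cap L^{\hat F_1}(\R^N):\|v\|_p^p=a^p\}$ of
\begin{equation*}
J_\varepsilon(v)=\frac1p\int_{\R^N}|\nabla v|^p\,dx+\frac1p\int_{\R^N}V(\varepsilon x)|v|^p\,dx-\frac1p\int_{\R^N}|v|^p\log|v|^p\,dx,
\end{equation*}
with $\lambda$ the Lagrange multiplier (the constant $\tfrac1p\|v\|_p^p$ produced by the primitive of the nonlinearity is dropped since $v\in S_a$). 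Since $J_\varepsilon$ is not $C^1$ on $W^{1,p}$ — the logarithmic term can equal $-\infty$, as the function displayed in the Introduction shows — I would work in the Orlicz space $L^{\hat F_1}(\R^N)$ and use the splitting $\tfrac1p s^p\log s^p=\hat F_2(s)-\hat F_1(s)$ with $v\mapsto\int_{\R^N}\hat F_1(|v|)\,dx$ convex and lower semicontinuous and the remaining part of $J_\varepsilon$ of class $C^1$, exactly the functional-analytic setting of \cite{Alves1,Alves2,Alves3}.

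First I would settle the autonomous limit problem with $V\equiv V_0$: putting $m(V_0,a)=\inf_{v\in S_a}I_{V_0}(v)$, where $I_{V_0}$ is $J_\varepsilon$ with $V(\varepsilon x)$ replaced by $V_0$, a logarithmic Sobolev inequality shows that $t\mapsto t-c\log t$ is bounded below and coercive, hence so is $I_{V_0}$ in $\|\nabla v\|_p$ on $S_a$ for every $a>0$ (the logarithm being weaker than any power makes the problem effectively mass-subcritical). A concentration-compactness and Brezis--Lieb argument on minimizing sequences, ruling out vanishing and dichotomy via the strict subadditivity $m(V_0,a)<m(V_0,a_1)+m(V_0,a_2)$ when $a_1+a_2=a$, then produces a nonnegative, radially nonincreasing minimizer $w_a$ solving the limit equation with multiplier $\lambda_{V_0,a}$; V\'azquez's strong maximum principle upgrades $w_a>0$, Moser iteration gives $w_a\in L^\infty$, and $p$-Laplacian regularity gives $w_a\in C^{1,\alpha}_{\rm loc}$. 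Testing the equation against $w_a$ together with the Pohozaev identity for the $p$-Laplacian yields the sign $\lambda_{V_0,a}<0$.

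Next, for the non-autonomous problem I would adapt the del Pino--Felmer penalization scheme to the constraint, following \cite{Alves,Alves1,AT}: fix $\delta>0$ so small that the balls $B_\delta(x^j)$ are pairwise disjoint, modify the logarithmic nonlinearity outside $(\bigcup_jB_\delta(x^j))/\varepsilon$ to a subcritical power with a small coefficient so that the modified functional $\widetilde J_\varepsilon$ satisfies the Palais--Smale condition on $S_a$ at energies near $m(V_0,a)$, and then run a barycenter/category argument. Concretely, truncating and recentering the limit minimizer $w_a$ at the points $x^j/\varepsilon$ produces, for $\varepsilon$ small, a continuous map from a neighborhood of $\{x^1,\dots,x^l\}$ into the sublevel set $\{\widetilde J_\varepsilon\le m(V_0,a)+o_\varepsilon(1)\}\cap S_a$, while a barycenter map sends that sublevel set back near $\{x^1,\dots,x^l\}$, the composition being homotopic to the identity; Lyusternik--Schnirelmann theory then forces $\widetilde J_\varepsilon$ to possess at least $l$ critical points $v^1_\varepsilon,\dots,v^l_\varepsilon$, each of energy close to $m(V_0,a)$ and therefore, after translation, concentrated near a distinct well $x^j$.

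Finally I would show these are genuine solutions. An energy bound plus Moser iteration give a uniform $L^\infty$ estimate, and the concentration implies each $v^j_\varepsilon$ is uniformly small away from a fixed small neighborhood of $x^j/\varepsilon$; since the logarithm of a small number is strongly negative, the penalization is inactive there, so $v^j_\varepsilon$ solves the original rescaled equation with $\lambda^j_\varepsilon<0$ (the sign passing to the limit from the autonomous case), and $v^j_\varepsilon>0$ by the maximum principle. Letting $y^j_\varepsilon$ be a maximum point of $v^j_\varepsilon$, a comparison of energies against $m(V_0,a)$ forces $\varepsilon y^j_\varepsilon\to x^j$ and $V(\varepsilon y^j_\varepsilon)\to V_0$. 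For the decay, outside the concentration region $\lambda^j_\varepsilon+\log|v^j_\varepsilon|^p$ is negative and bounded away from $0$, so $v^j_\varepsilon$ is a subsolution of $-\Delta_pw+c|w|^{p-2}w\le0$ with $c>0$; comparing with barriers of the form $C_0\exp(-c_0|x-y^j_\varepsilon|)$ gives exponential decay, and undoing the scaling via $u^j_\varepsilon(x)=v^j_\varepsilon(x/\varepsilon)$, $z^j_\varepsilon=\varepsilon y^j_\varepsilon$ yields the stated estimate. I expect the main obstacle to be running the concentration-compactness and Palais--Smale analysis simultaneously with the non-smooth logarithmic term and the mass constraint — in particular keeping the Lagrange multipliers of Palais--Smale sequences under control and ruling out mass loss in the weak limit — together with the delicate energy comparison needed to pin the concentration points exactly to the wells $x^1,\dots,x^l$.
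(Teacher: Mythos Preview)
Your route differs from the paper's in two essential ways.

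First, you miss the paper's one-line reduction that \emph{is} the proof of Theorem~\ref{maintheorem1}. After the rescaling $v(x)=u(\varepsilon x)$, the paper makes the further substitution $v=\sigma w$ with $\sigma>0$ small: this replaces $V(\varepsilon x)$ by $V(\varepsilon x)-\log\sigma^p\ge -1$ and the mass $a^p$ by $a^p\sigma^{-p}$, which can be made arbitrarily large. Thus Theorem~\ref{maintheorem1} under $(V_1)$ with arbitrary $a>0$ follows immediately from Theorem~\ref{maintheorem2} under $(\hat V_1)$ with $a>a^*$. Your proposal attacks the problem directly and asserts that the autonomous minimization works ``for every $a>0$''; in the paper's framework the negativity $\mathcal I_{\mu,a}<0$ (Lemma~\ref{realnumber2}), which is what rules out vanishing in the compactness theorem (Theorem~\ref{compactness}), is only proved for large $a$, and your log-Sobolev remark addresses coercivity but not this point.

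Second, for the multiplicity you propose del~Pino--Felmer penalization together with Lusternik--Schnirelmann category. The paper uses neither. Since $(V_1)$ is already a global Rabinowitz-type condition $V_0<V_\infty$, compactness of $(PS)$ sequences below $\mathcal I_{0,a}+\Upsilon$ comes from the energy gap $\mathcal I_{0,a}<\mathcal I_{\infty,a}$ (Corollary~\ref{limitcorollary}, Theorem~\ref{PScondition}) with no modification of the nonlinearity. The $l$ solutions are then produced by splitting $S(a)$ into the barycenter sets $\theta_\varepsilon^j=\{u\in S(a):|Q_\varepsilon(u)-x^j|<\rho_0\}$, showing $\beta_\varepsilon^j<\tilde\beta_\varepsilon^j$ (Lemma~\ref{PS2}), and running Ekeland's principle on each $\theta_\varepsilon^j$ (Proposition~\ref{existence}). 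The category argument of \cite{Alves1} is a legitimate alternative for the count, but the penalization step is risky here: the mass constraint is global while the truncation is local, and you do not explain how to prevent the mass of a penalized solution from leaking into the truncated region, nor how to recover the genuine equation on the full constraint. A minor slip: the subadditivity should read $a_1^p+a_2^p=a^p$, not $a_1+a_2=a$.
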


\begin{remark}
A recent work by Alves and Ji we would like to mention here is the paper \cite{Alves1}, where the authors
contemplated the following Schr\"{o}dinger problem with logarithmic nonlinearity
\begin{equation}\label{AJequation}
  \left\{
   \begin{array}{ll}
   \displaystyle -\varepsilon^2\Delta  u+V(x) u=\lambda  u+ u\log|u|^2
~\text{in}~\R^N, \\
    \displaystyle     \int_{\R^N}|u|^2dx=a^2\varepsilon^N.
   \end{array}
 \right.
\end{equation}
Here the potential $V$ satisfies the assumptions
\begin{itemize}
  \item[$(\hat{V}_1)$] $V\in \mathcal{C}^0(\mathbb{R}^N)$  and $-1\leq V_0=
  \inf\limits_{x\in\mathbb{R}^N}V(x)<V_\infty=\lim\limits_{|x|\to+\infty}V(x)<+\infty$;
  \item[$(\hat{V}_2)$] $M=\{x\in\R^N:V(x)=V_0\}$ and $M_\delta=\{x\in\R^N:\text{dist}(x,M)\leq\hat{\delta}\}$ for some $\hat{\delta}>0$.
\end{itemize}
They derived that, given a $\hat{\delta}>0$, there exist $\hat{a}>0$ and $\hat{\varepsilon}>0$
such that problem \eqref{AJequation} possesses at lease $\text{cat}_{M_{\hat{\delta}}}(M)$
couples of solutions for all $a>\hat{a}$ and $\varepsilon\in(0,\hat{\varepsilon})$, where the concentration has also been studied
but the property of exponential decay of solutions is absent.
In contrast to it, there are three main contributions in the present paper which are exhibited below

(I) We just suppose that $V_0>-\infty$ in $(V_1)$ instead of $V_0\geq-1$ in $(\hat{V}_1)$;

(II) There is no restriction on the mass $a>0$ in Theorem \ref{maintheorem1}, while the results in \cite{Alves1} hold true
provided that the mass $a>0$ is sufficiently large;

(III) The $p$-Laplacian operator appearing in Eq. \eqref{mainequation1} is non-homogenous which reveals that
the calculations would be more complicated and technical. For instance,
as a counterpart of \eqref{AJdecomposition}, it is nontrivial to
  construct some functions $F_1$ and $F_2$ such that
\begin{equation}\label{decomposition}
  F_2(s)-F_1(s)=\frac1p|s|^p\log|s|^p,~\forall s\in\R.
\end{equation}
 Moreover, we shall make some efforts to investigate the property of exponential decay of obtained solutions
 which should be regarded as a replenishment. As a consequence, we could never repeat the approaches adopted in \cite{Alves1}
 to conclude the proof of Theorem \ref{maintheorem1}.
\end{remark}

We note that the considerations of Eq. \eqref{mainequation1} can date back to
the studies of semiclassical problems for Schr\"{o}dinger equations,
the reader could refer to \cite{AM,BF} for some detailed survey on such topic which comes from the
pioneering research work by Floer and Weinstein \cite{FW}. Soon afterwards, this topic on different types of
Schr\"{o}dinger equations
has been investigated extensively under several distinct hypotheses on the potential and the nonlinearity,
see e.g. \cite{ABC,DF1,DF2,Ding1,Ding2,JT,Rabinowitz,Wang} and the references therein.
Hence, it permits us to follow the effective procedures in the literature to handle the $p$-Laplacian problems with $2\leq p<N$
and logarithmic nonlinearity
in this paper.

Performing the scaling $v(x)=u(\varepsilon x)$,
one could observe that, to consider \eqref{mainequation1}-\eqref{mainequation1a}, it is equivalent to study the problem
\begin{equation}\label{mainequation2}
 \left\{
   \begin{array}{ll}
   \displaystyle -  \Delta_p v+V(\varepsilon x)|v|^{p-2}v=\lambda |v|^{p-2}v+|v|^{p-2}v\log|v|^p
~\text{in}~\R^N, \\
    \displaystyle     \int_{\R^N}|v|^pdx=a^p.
   \end{array}
 \right.
\end{equation}
In other words, if the couple $(v,\lambda)$ is a (weak)
solution of Problem \eqref{mainequation2}, then
$(u,\lambda)$ is a
solution of \eqref{mainequation1}-\eqref{mainequation1a},
where $v(x)=u(\varepsilon x)$ for all $x\in\R^N$.
Let $v=\sigma w$ with some $\sigma > 0$, we shall observe that the couple $(v,\lambda)$ is a weak solution to Problem
\eqref{mainequation2} if and only if $(w,\lambda)$ is a weak solution to problem below
\begin{equation}\label{mainequation3}
 \left\{
   \begin{array}{ll}
   \displaystyle -  \Delta_p w+[V(\varepsilon x)-\log \sigma^{p}]|w|^{p-2}w=\lambda |w|^{p-2}w+|w|^{p-2}w\log|w|^p
~\text{in}~\R^N, \\
    \displaystyle     \int_{\R^N}|w|^pdx=a^p\sigma^{-p}.
   \end{array}
 \right.
\end{equation}
At this stage, we know that if one wants to study problems \eqref{mainequation1}-\eqref{mainequation1a},
it would be enough to deal with Problem \eqref{mainequation3}.
Since $\sigma>0$ is arbitrary,
we are able to find a sufficiently small $\sigma>0$ such that
$$
V(\varepsilon x)-\log \sigma^{p}\geq -1,~\forall x\in\R^N,
$$
since $V_0=\inf\limits_{x\in\R^N}V(x)>-\infty$ by $(V_1)$ and
$ a^p\sigma^{-p}>\tilde{a}$,
where $\tilde{a}>0$ is a fixed constant.

Owing to the above discussions, to deduce Theorem \ref{maintheorem1},
we just need to prove the following result.

\begin{theorem}\label{maintheorem2} Let $2\leq p<N$
and $(\hat{V}_1)-(V_2)$. Then, there exist $a^*>0$ and $\varepsilon^*>0$ such that
\eqref{mainequation1}-\eqref{mainequation1a} has at least $l$ different couples of weak solutions
 $(u^j_\varepsilon,\lambda^j_\varepsilon)\in W^{1,p}(\R^N)\times\R$
for all $a>a^*$ and $\varepsilon\in(0,\varepsilon^*)$ with $u^j_\varepsilon(x)>0$ for every $x\in\R^N$ and
 $\lambda^j_\varepsilon<0$, where $j\in\{1,2,\cdots,l\}$.
Moreover, each $u^j_\varepsilon$ admits a maximum point $z^j_\varepsilon\in\R^N$ such that $V(z^j_\varepsilon)\to V(x^j)=V_0$
as $\varepsilon\to0^+$. Besides, there exist two constants $C_0^j>0$ and $c_0^j>0$ satisfying
$$
u^j_\varepsilon\leq  C_0^j\exp  \bigg(-c_0^j \frac{|x-z^j_\varepsilon|}{\varepsilon}  \bigg )
$$
for all $\varepsilon\in(0,\varepsilon^*)$ and $x\in\R^N$.
\end{theorem}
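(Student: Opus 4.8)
The plan is to combine a del~Pino--Felmer type penalization with constrained minimization on the $L^p$-sphere, carried out in an Orlicz space adapted to the logarithmic term; throughout one works with the scaled problem \eqref{mainequation2} (under $(\hat V_1)$, so $V\ge-1$) and with the mass $a$ large, which is exactly the setting that reduces Theorem~\ref{maintheorem1} to Theorem~\ref{maintheorem2}.

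\emph{Functional setting and the splitting \eqref{decomposition}.} First I would produce explicit $F_1,F_2$ realising \eqref{decomposition}: one may take $F_1(s)=-\frac1p|s|^p\log|s|^p$ for $|s|\le\delta$ with $\delta\in(0,1)$ small, extended affinely in $|s|$ for $|s|>\delta$, and $F_2=F_1+\frac1p|s|^p\log|s|^p$; then $F_1\ge0$ is even, convex and $\mathcal{C}^1$, while $F_2$ is even, $\mathcal{C}^1$ and of subcritical growth, so $|F_2(s)|\le\eta|s|^p+C_\eta|s|^q$ for every $\eta>0$ and some fixed $q\in(p,p^*)$. Put $X=W^{1,p}(\R^N)\cap L^{F_1}(\R^N)$ with its Luxemburg norm and $S_a=\{v\in X:\ \|v\|_{L^p}^p=a^p\}$. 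On $S_a$ consider
\[
I_\varepsilon(v)=\frac1p\int_{\R^N}|\nabla v|^p\,dx+\frac1p\int_{\R^N}V(\varepsilon x)|v|^p\,dx+\int_{\R^N}F_1(v)\,dx-\int_{\R^N}F_2(v)\,dx,
\]
which (note that $V$ is bounded, being continuous with a finite limit at infinity) splits as a $\mathcal{C}^1$ functional plus the convex lower semicontinuous term $v\mapsto\int F_1(v)$; a constrained critical point of $I_\varepsilon$ on $S_a$ in the nonsmooth (Szulkin) sense yields, through a Lagrange multiplier, a weak solution $(v,\lambda)$ of \eqref{mainequation2}, hence of \eqref{mainequation1}--\eqref{mainequation1a} after unscaling.

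\emph{The autonomous problem.} For $\mu\in[-1,V_\infty]$ set $I_\mu^\infty(w)=\frac1p\|\nabla w\|_p^p+\frac\mu p\|w\|_p^p+\int F_1(w)-\int F_2(w)$ and $m(\mu)=\inf_{S_a}I_\mu^\infty$. Using the $L^p$-logarithmic Sobolev inequality one sees that $I_\mu^\infty$ is coercive in the gradient and bounded below on $S_a$, hence $m(\mu)\in\R$; testing with $w_a=a\|\phi\|_p^{-1}\phi$ for a fixed $\phi\in\mathcal{C}_c^\infty(\R^N)$ gives $I_\mu^\infty(w_a)\le C a^p-\frac1p a^p\log a^p\to-\infty$, so $m(\mu)<0$ for all $a>a^*$, with $a^*$ depending only on $V_\infty$. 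For attainment I restrict to nonnegative competitors and Schwarz-symmetrise, so a minimising sequence may be taken radially decreasing, whence bounded in $W^{1,p}$ and relatively compact in $L^q$; weak lower semicontinuity of the gradient term and of $\int F_1$, together with strong $L^q$-convergence for the subcritical term $\int F_2$, reduce the matter to ruling out loss of mass, which follows from the strict subadditivity $m_a<m_{a_1}+m_{a_2}$ (for $a^p=a_1^p+a_2^p$) forced by the logarithmic nonlinearity. One also checks that $\mu\mapsto m(\mu)$ is continuous and nondecreasing and that ground states are positive with a fixed decaying profile.

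\emph{Multiplicity, concentration, positivity and decay.} Since $V\ge V_0$ one has $c_\varepsilon:=\inf_{S_a}I_\varepsilon\ge m(V_0)$; testing with a ground state of $I_{V_0}^\infty$ recentred at $x^j/\varepsilon$ and cut off gives $\limsup_{\varepsilon\to0}c_\varepsilon\le m(V_0)$, so $c_\varepsilon\to m(V_0)$. Fix $\rho>0$ with $\overline{B_{2\rho}(x^j)}$ pairwise disjoint and such that on each of them $V$ attains $V_0$ only at $x^j$. A concentration--compactness comparison against the family $\{m(\mu)\}$ shows that any $v\in S_a$ with $I_\varepsilon(v)\le m(V_0)+o_\varepsilon(1)$ must, for $\varepsilon$ small, concentrate around a single $x^j$ (concentration where $V>V_0$ would cost energy $\ge m(V(\cdot))>m(V_0)$; splitting is excluded by subadditivity). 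Using a truncated barycentre $\beta_\varepsilon$, for each $j$ I would minimise $I_\varepsilon$ over $\{v\in S_a:\ |\beta_\varepsilon(v)-x^j|\le\rho,\ I_\varepsilon(v)\le m(V_0)+h(\varepsilon)\}$ with $h(\varepsilon)\to0$ suitably; the concentration fact forces the minimiser $v^j_\varepsilon$ to have $\beta_\varepsilon(v^j_\varepsilon)$ strictly interior, so $v^j_\varepsilon$ is a genuine constrained critical point, giving $(v^j_\varepsilon,\lambda^j_\varepsilon)$; for $i\ne j$ these solutions concentrate near distinct points, hence are distinct. Finally: (i) testing the equation with $v^j_\varepsilon$ gives $\lambda^j_\varepsilon a^p=p\,I_\varepsilon(v^j_\varepsilon)\to p\,m(V_0)<0$, so $\lambda^j_\varepsilon<0$ for $\varepsilon$ small; (ii) taking $|v^j_\varepsilon|$ as competitor one may assume $v^j_\varepsilon\ge0$, and a maximum-principle argument for $\Delta_p$ adapted to the logarithmic reaction upgrades this to $v^j_\varepsilon>0$; (iii) since $|s|^{p-2}s\log|s|^p$ is subcritical, Moser iteration gives a bound $\|v^j_\varepsilon\|_\infty\le C$ uniform in $\varepsilon$, and the translates $v^j_\varepsilon(\cdot+y^j_\varepsilon)$ (with $y^j_\varepsilon$ a maximum point) converge, up to subsequence, to a ground state of $I_{V_0}^\infty$, forcing $\varepsilon y^j_\varepsilon\to x^j$, in particular $V(\varepsilon y^j_\varepsilon)\to V_0$; (iv) being uniformly continuous $L^p$-functions, the $v^j_\varepsilon(\cdot+y^j_\varepsilon)$ are uniformly small outside a fixed large ball for $\varepsilon$ small, so there $\log|v^j_\varepsilon|^p$ is strongly negative and $v^j_\varepsilon$ is a subsolution of $-\Delta_p w+w^{p-1}=0$; comparing with an exponential supersolution for the $p$-Laplacian yields $v^j_\varepsilon(x)\le Ce^{-c|x-y^j_\varepsilon|}$ with $C,c$ independent of $\varepsilon$. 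Undoing the scaling ($u^j_\varepsilon(x)=v^j_\varepsilon(x/\varepsilon)$, $z^j_\varepsilon=\varepsilon y^j_\varepsilon$) delivers all the claimed properties.

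\emph{Main obstacle.} The delicate points are the compactness of minimising sequences for the autonomous problem---i.e.\ the strict subadditivity of $a\mapsto m(a)$ in the Orlicz framework and for a sign-changing, non-power nonlinearity---and the \emph{quantitative} concentration of near-minimisers of $I_\varepsilon$ at a single minimum point of $V$, which simultaneously drives the multiplicity, the location of the maxima and the convergence $V(z^j_\varepsilon)\to V_0$; once the uniform $L^\infty$-bound is available, the exponential decay is comparatively routine, modulo exhibiting the correct exponential supersolution for $\Delta_p$.
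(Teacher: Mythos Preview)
Your proposal is essentially the same strategy as the paper's---Orlicz space $X=W^{1,p}\cap L^{F_1}$, the splitting \eqref{decomposition}, an autonomous limit problem, a barycentre-type map to produce $l$ localised minimising problems, Moser iteration for $L^\infty$-bounds, and a comparison with an exponential supersolution for the decay---so the overall architecture is right. A few points where the paper differs in implementation are worth knowing. First, there is no need for the Szulkin nonsmooth framework: once one works in $X$, the term $v\mapsto\int F_1(v)$ is genuinely $\mathcal{C}^1$ (this is the content of Lemma~\ref{F1} and its corollary), so $I_\varepsilon\in\mathcal{C}^1(X)$ and ordinary Lagrange multipliers apply. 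Second, for the autonomous problem the paper does \emph{not} use Schwarz symmetrisation; instead it proves the strict scaling monotonicity $\frac{a_1^p}{a_2^p}\mathcal{I}_{\mu,a_2}<\mathcal{I}_{\mu,a_1}$ (Lemma~\ref{monotone}) and a translation-based compactness theorem (Theorem~\ref{compactness}); your radial route would also work but is not what is done here. Third, rather than directly minimising on $\{|\beta_\varepsilon(v)-x^j|\le\rho\}$ and arguing a posteriori that the minimiser is interior, the paper runs Ekeland's principle on the sets $\theta_\varepsilon^j$ to produce a Palais--Smale sequence and then invokes a quantitative $(PS)_{\hat d}$-condition (Theorem~\ref{PScondition}) valid below the explicit level $\mathcal{I}_{0,a}+\Upsilon$ with $\Upsilon\le\min\{\tfrac12,\tfrac{\hat\delta}{a^p}\}(\mathcal{I}_{\infty,a}-\mathcal{I}_{0,a})$; this is where the gap $\mathcal{I}_{0,a}<\mathcal{I}_{\infty,a}$ (from $V_0<V_\infty$) is used decisively. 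Finally, two small slips: the label ``del~Pino--Felmer penalisation'' is misleading, since no truncation of the nonlinearity outside a set is performed; and your identity $\lambda^j_\varepsilon a^p=pI_\varepsilon(v^j_\varepsilon)$ is off by $a^p$ (one gets $\lambda^j_\varepsilon a^p=pI_\varepsilon(v^j_\varepsilon)-a^p$), though the conclusion $\lambda^j_\varepsilon<0$ is unaffected.
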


Now, we shall turn to investigate the existence of normalized solutions
for logarithmic $p$-Laplacian equations with mass subcritical and supercritical nonlinearities.
First of all, we recall the Gagliardo-Nirenberg
inequality (see e.g. \cite{Agueh,Weinstein}), for every $p<s<p^*$, there exists an optimal constant $\mathbb{C}_{N,p,s}>0$
 depending only
on $N$, $p$ and $s$ such that
\begin{equation}\label{GN}
\|u\|_{L^s(\R^N)}\leqslant \mathbb{C}_{N,p,s}
\|\nabla u\|_{L^p(\R^N)}^{\beta_s}\|u\|_{L^p(\R^N)}^{1-\beta_s},~\forall u\in W^{1,p}(\mathbb{R}^N),
\end{equation}
 where
 \begin{equation}\label{GN2}
 \beta_s\triangleq N\left(\frac1p-\frac1s\right)=   \frac{N(s-p)}{ps}.
\end{equation}
Due to \eqref{GN}, one observes that
$$
\bar{p}=p+\frac{p^2}N
$$
is the $L^p$-critical exponent
with respect to $p$-Laplacian equation.
Indeed, we focus on establishing the existence and multiplicity of positive solutions
for the following $p$-Laplacian type problems
\begin{equation}\label{mainequation4}
 \left\{
   \begin{array}{ll}
   \displaystyle -  \Delta_p u =\lambda |u|^{p-2}u+|u|^{p-2}u\log|u|^p+\mu|u|^{q-2}u
~\text{in}~\R^N, \\
    \displaystyle     \int_{\R^N}|u|^pdx=a^p,
   \end{array}
 \right.
\end{equation}
where $a>0$, $\mu>0$ and $q\in(p,\bar{p})\cup(\bar{p},p^*)$.

In order to contemplate the Problem \eqref{mainequation4} involving a class of pure-power type mass subcritical and supercritical nonlinearities,
we shall continue to establish the existence of global minimizer of a minimization problem, namely
$$
m(a)=\inf_{u\in S(a)}J(u),
$$
where the variational functional $J:X\to\R$ is given by
\begin{equation}\label{JJ}
  J(u) =\frac{1}{p}\int_{\mathbb{R}^{N}}\left(|\nabla u |^{p}+|u |^{p}\right)dx
-\frac{1}{p}\int_{\mathbb{R}^{N}}|u |^{p}\log|u |^{p}dx
-\frac{\mu}{q}\int_{\mathbb{R}^{N}}|u |^{q} dx
\end{equation}
and the constrained set
$$
S(a)=\left\{u\in X:\int_{\R^N}|u|^pdx=a^p\right\}.
$$
Here $a>0$ and the space $X$ can be found in Section \ref{Preliminaryresults} below.

The main results for the Problem \eqref{mainequation4} involving mass-subcritical nonlinearities
can be stated as follows.

\begin{theorem}\label{maintheorem3}
Suppose that $2\leq p<N$, $\mu>0$ and $p<q< \bar{p}$. Then, there is an $a_*>0$ such that, for all fixed $a>a_*$,
$m(a)$ admits a minimizer $u\in S(a)$ which is positive and radially symmetric
and decreasing in $r=|x|$. Moreover, there is a $\lambda\in\R$ such that $(u,\lambda)\in X\times\R$ solves Problem \eqref{mainequation4}.
\end{theorem}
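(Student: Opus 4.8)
The plan is to obtain $m(a)$ as an attained minimum by the direct method: restrict to Schwarz symmetrized functions to recover compactness, use a test-function estimate to exclude vanishing and a strict subadditivity inequality to exclude dichotomy, and finally invoke the Lagrange multiplier rule together with the $p$-Laplacian maximum principle.

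\emph{Step 1 (well-posedness and coercivity).} By the splitting \eqref{decomposition} the functional \eqref{JJ} on $S(a)$ reads $J(u)=\frac1p\int_{\R^N}|\nabla u|^p+\frac1p a^p+\int_{\R^N}F_1(u)-\int_{\R^N}F_2(u)-\frac\mu q\int_{\R^N}|u|^q$, where $F_1\geq0$ and $F_2$ vanishes near the origin with $|F_2(s)|\leq C_\eta|s|^{p+\eta}$ for every small $\eta>0$. Choosing $\eta<p^2/N$, so that $p+\eta<\bar p$, the Gagliardo--Nirenberg inequality \eqref{GN}--\eqref{GN2} bounds $\int_{\R^N}F_2(u)$ and $\frac\mu q\int_{\R^N}|u|^q$ on $S(a)$ by $C(a)\|\nabla u\|_{L^p}^{\theta_1}$ and $C(a)\|\nabla u\|_{L^p}^{\theta_2}$ with $\theta_1=N\eta/p<p$ and $\theta_2=N(q-p)/p<p$, the latter exactly because $q<\bar p$. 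Since $\int_{\R^N}F_1(u)\geq0$, this gives $J(u)\geq\frac1p\|\nabla u\|_{L^p}^p-C(a)(\|\nabla u\|_{L^p}^{\theta_1}+\|\nabla u\|_{L^p}^{\theta_2})+\frac1p a^p$, whence $m(a)>-\infty$ and every minimizing sequence is bounded in $W^{1,p}(\R^N)$, and then, rearranging the same identity, also in $X$.

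\emph{Step 2 (symmetrization and no vanishing).} Equimeasurability yields $\int_{\R^N}|u^*|^p=\int_{\R^N}|u|^p$, $\int_{\R^N}|u^*|^q=\int_{\R^N}|u|^q$, $\int_{\R^N}|u^*|^p\log|u^*|^p=\int_{\R^N}|u|^p\log|u|^p$ and $\int_{\R^N}F_1(u^*)=\int_{\R^N}F_1(u)$, while $\|\nabla u^*\|_{L^p}\leq\|\nabla u\|_{L^p}$ by P\'olya--Szeg\H o; hence $J(u^*)\leq J(u)$ and $u^*\in X\cap S(a)$, so the minimizing sequence $(u_n)$ may be taken nonnegative, radial and radially nonincreasing. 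Up to a subsequence $u_n\rightharpoonup u$ in $W^{1,p}(\R^N)$, $u_n\to u$ a.e., and by compactness of the radial embedding $W^{1,p}_{\mathrm{rad}}(\R^N)\hookrightarrow L^s(\R^N)$ for $p<s<p^*$ we get $u_n\to u$ in $L^q$ and in $L^{p+\eta}$, so $\int_{\R^N}|u_n|^q\to\int_{\R^N}|u|^q$ and $\int_{\R^N}F_2(u_n)\to\int_{\R^N}F_2(u)$. If $u\equiv0$, then using $F_1\geq0$ we would get $m(a)=\lim J(u_n)\geq\frac1p a^p$; but testing $J$ with $a\psi$ for a fixed $\psi\in S(1)$ and letting $a\to\infty$ (the term $-\frac\mu q a^q\int_{\R^N}|\psi|^q$ dominates since $q>p$) shows $m(a)<\frac1p a^p$, and even $m(a)\to-\infty$, for all large $a$. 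Hence $u\not\equiv0$ once $a$ is large.

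\emph{Step 3 (no dichotomy --- the main difficulty).} Set $b:=\|u\|_{L^p}\in(0,a]$; passing to the lower limit in $J(u_n)$ (Fatou for $\int_{\R^N}F_1$, weak lower semicontinuity for $\int_{\R^N}|\nabla\cdot|^p$, exact limits for the other terms) and using $\|u\|_{L^p}^p=b^p$ gives $J(u)\leq m(a)-\frac1p(a^p-b^p)$, hence $m(a)\geq m(b)+\frac1p(a^p-b^p)$. It thus suffices to prove that $f(a):=m(a)-\frac1p a^p=\inf_{S(a)}\big(\frac1p\|\nabla u\|_{L^p}^p-\frac1p\int_{\R^N}|u|^p\log|u|^p-\frac\mu q\int_{\R^N}|u|^q\big)$ is strictly decreasing for $a$ large. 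For $u\in S(b)$ and $\sigma:=a/b>1$ one has $\sigma u\in S(a)$ and
\[
f(a)\leq\sigma^p\Big(\tfrac1p\|\nabla u\|_{L^p}^p-\tfrac1p\!\int_{\R^N}|u|^p\log|u|^p-\tfrac\mu q\!\int_{\R^N}|u|^q\Big)-\frac{\sigma^p b^p\log\sigma^p}{p}-\frac\mu q(\sigma^q-\sigma^p)\!\int_{\R^N}|u|^q,
\]
and the last two terms are negative because $\sigma>1$ and $q>p$; taking the infimum over $u\in S(b)$ gives $f(a)\leq\sigma^p f(b)-a^p\log(a/b)$ with $a^p\log(a/b)>0$. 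Since the test family $b\mapsto b\psi$ gives $f(b)\to-\infty$ as $b\to\infty$, for $b$ beyond a fixed threshold one has $f(b)<0$, whence $\sigma^p f(b)\leq f(b)$ and $f(a)<f(b)$; enlarging $a_*$ (using $f(a)\to-\infty$ while, by Step 1, $f$ is bounded below on bounded subsets of $(0,\infty)$) covers the remaining small values of $b$ as well. Therefore $b=a$, so $u_n\to u$ strongly in $L^p(\R^N)$, and then $J(u)\leq\liminf J(u_n)=m(a)$ with $u\in S(a)$ forces $u$ to be a minimizer, all convergences becoming strong.

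\emph{Step 4 (conclusion).} Replacing $u$ by $u^*$ we may assume $u\geq0$ is radial and nonincreasing in $r=|x|$. Since $S(a)$ is a $C^1$ manifold (the derivative of $u\mapsto\frac1p\|u\|_{L^p}^p$ does not vanish on it) and $u$ minimizes $J$ on it, the variational framework set up in Section~\ref{Preliminaryresults} provides $\lambda\in\R$ such that $(u,\lambda)$ solves \eqref{mainequation4} weakly. Elliptic regularity for the $p$-Laplacian (Moser iteration to get $u\in L^\infty$, then DiBenedetto's estimates to get $u\in C^{1,\alpha}_{\mathrm{loc}}$) shows that $-\Delta_p u+(-\lambda-\log u^p+C)u^{p-1}=Cu^{p-1}+\mu u^{q-1}\geq0$ with coefficient bounded below once $u$ is bounded, so Vázquez's strong maximum principle yields $u>0$ in $\R^N$, and the radial monotonicity is inherited from the symmetrization. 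The principal obstacle is the strict subadditivity of Step 3: plain dilations are inconclusive because of the scaling behaviour of the logarithmic term, and one must combine the amplitude scaling $u\mapsto\sigma u$ --- which produces the favourable term $-\sigma^p b^p\log\sigma^p$ --- with the sign information $f(b)<0$ valid for large $b$; a secondary, more technical burden, already isolated in Section~\ref{Preliminaryresults}, is the control of the logarithmic term through \eqref{decomposition} and the good behaviour of $J$ and $S(a)$ under symmetrization in $X$.
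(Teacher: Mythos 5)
Your proposal is correct, but the way you rule out dichotomy is genuinely different from the paper's. The paper proves plain subadditivity $m(\sqrt[p]{a_1^p+a_2^p})\leq m(a_1)+m(a_2)$ by disjoint-support test functions (Lemma \ref{decreasing}), establishes continuity of $a\mapsto m(a)$ (Lemma \ref{continuous}) and a Br\'ezis--Lieb lemma for the logarithmic term (Lemma \ref{BrezisLieb}), and then, in Lemma \ref{attained}, runs a case distinction on whether $u_n\to u$ in $L^p$, turning a chain of inequalities into equalities. You instead extract the quantitative bound $m(a)\geq m(b)+\tfrac1p(a^p-b^p)$ with $b=\|u\|_{L^p}$ directly from weak lower semicontinuity plus the compact radial embedding (every term that could go negative is compact, so the mass escaping to infinity still costs $\tfrac1p$ per unit of $\|\cdot\|_{L^p}^p$), and you kill the case $b<a$ by showing that $f(a)=m(a)-\tfrac1p a^p$ satisfies $f(a)<f(b)$ for $a>a_*$ and $0<b<a$, via the amplitude scaling $u\mapsto\sigma u$ together with the sign information $f<0$ for large mass and a uniform lower bound for $f$ on bounded mass intervals. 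This is exactly the device of Lemma \ref{monotone} from Section \ref{semiclassical}, transplanted to the functional $J$; it buys you a shorter, more self-contained compactness argument that dispenses with both the Br\'ezis--Lieb splitting of the logarithm and the continuity of $m$, at no real cost for the radial setting (the paper's disjoint-support subadditivity is the version that survives without radial symmetry). Your Steps 1, 2 and 4 coincide in substance with the paper's Lemma \ref{aut1} and the conclusion of the proof of Theorem \ref{maintheorem3}.

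One small imprecision in Step 4: the justification ``coefficient bounded below once $u$ is bounded'' is not the right condition for the strong maximum principle, since the zeroth-order coefficient $-\lambda-\log u^p$ blows up to $+\infty$ precisely where $u$ is small, which is the dangerous regime. What V\'azquez's theorem actually requires is the Osgood-type condition $\int_0^1\bigl(s\beta(s)\bigr)^{-1/p}\,ds=+\infty$ for $\beta(s)\sim s^{p-1}|\log s^p|$ near $0$, which does hold; this is exactly the computation carried out in Step 3 of the proof of Theorem \ref{Pohozaev}, so the conclusion $u>0$ stands, but you should state the hypothesis you are verifying rather than boundedness of the coefficient.
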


\begin{remark}
 It should be stressed here that Theorem \ref{maintheorem3} partially
 generalizes  \cite[Theorem 1.1]{SY}. Nevertheless, one could never simply repeat the arguments explored in it
 to arrive at our result since it needs some efforts to construct a suitable $N$-function
 like \eqref{AA} in the $p$-Laplacian setting.
 In order to avoid this obstacle, we continue to make use of the decomposition in \eqref{decomposition}
 which should be regard as one of novelties in this article.
 Whereas, the biggest challenge in the proof of Theorem \ref{maintheorem3}
 is the lack of compactness. Explaining it more clearly, the imbedding $W_r\hookrightarrow L^2(\R^N)$
 in \cite{SY} is compact, see e.g. \cite[Proposition 3.1]{Cazenave}, but we indeed can not conclude the compact
 imbedding $X_r\hookrightarrow L^p(\R^N)$ in advance.
 To overcome this difficulty, we shall introduce some new analytic tricks to recover the desired
 compactness, see Lemma \ref{attained} for instance.
 Moreover, the reader would observe that the case $\mu=0$ in Theorem \ref{maintheorem3}
acts as a special one of Theorem \ref{limittheorem} below.
 Alternatively, we successfully put forward a totally different argument to look for a minimizer of $m(a)$.
 \end{remark}

Finally, we begin paying attention to the case $p+\frac{p^2}{N}<q<p^*$
which leads to the Problem \eqref{mainequation4} a mass-supercritical one
and prove the following result.

\begin{theorem}\label{maintheorem4}
Suppose that $2\leq p<N$ and $p+\frac{p^2}{N}<q<p^*$,
there exist $\underline{a}^*>0$ and $\underline{\mu}^*>0$ such that,
  for all $a>\underline{a}^*$ and $\mu\in (0,\underline{\mu}^*)$,
  Problem \eqref{mainequation4} has a  couple of weak solution
 $(\underline{u}^*,\underline{\lambda}^*)\in X\times\R$ with $\underline{u}^*(x)>0$ for all $x\in\R^N$.
\end{theorem}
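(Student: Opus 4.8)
The plan is to obtain $(\underline u^{*},\underline\lambda^{*})$ as a mountain--pass critical point of the functional $J$ in \eqref{JJ} constrained to $S(a)$, reading off the multiplier $\underline\lambda^{*}$ afterwards from the Lagrange rule. The geometry is dictated by the $L^{p}$--preserving dilation $u\star t:=t^{N/p}u(t\cdot)\in S(a)$, $t>0$, for which
\[
J(u\star t)=\frac{t^{p}}{p}\|\nabla u\|_{L^{p}(\R^{N})}^{p}+\frac{a^{p}}{p}-\frac{Na^{p}}{p}\log t-\frac1p\int_{\R^{N}}|u|^{p}\log|u|^{p}\,dx-\frac{\mu}{q}\,t^{q\beta_{q}}\int_{\R^{N}}|u|^{q}\,dx ,
\]
with $\beta_{q}$ as in \eqref{GN2}. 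Since $q>\bar p=p+\frac{p^{2}}{N}$ is equivalent to $q\beta_{q}>p$, the map $t\mapsto J(u\star t)$ tends to $-\infty$ as $t\to+\infty$; on the other hand, feeding the $L^{p}$--logarithmic Sobolev inequality (equivalently \eqref{GN} as $s\downarrow p$) into \eqref{JJ} gives, on $\mathcal V_{R}:=\{u\in S(a):\|\nabla u\|_{L^{p}}^{p}\le R\}$, a bound of the form $J(u)\ge\frac{1}{2p}\|\nabla u\|_{L^{p}}^{p}-C_{a}\log\|\nabla u\|_{L^{p}}^{p}-C_{a}$ once $\mu$ is small. Thus $J$ has a ``well'' on $\mathcal V_{R}$ around the location of the minimum of the $\mu=0$ profile supplied by Theorem \ref{limittheorem}, while one may pick $u_{0}\in S(a)$ with large gradient norm and $J(u_{0})$ as negative as desired. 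This produces a mountain--pass value $c_{\mu}(a)=\inf_{\gamma}\max_{t\in[0,1]}J(\gamma(t))>0$ taken over paths in $S(a)$ joining $\mathcal V_{R}$ to $u_{0}$.

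Next I would produce a bounded Palais--Smale sequence by the device of Jeanjean \cite{Jeanjean1997}: work with the auxiliary functional $\widetilde J(s,u):=J(u\star e^{s})$ on $\R\times(S(a)\cap X)$, which carries the same geometry, and apply a minimax principle together with Ekeland's variational principle to get $(s_{n},u_{n})$ with $\widetilde J(s_{n},u_{n})\to c_{\mu}(a)$, $s_{n}\to0$, and both ``partial derivatives'' tending to zero. Then $v_{n}:=u_{n}\star e^{s_{n}}$ is a Palais--Smale sequence for $J|_{S(a)}$ at level $c_{\mu}(a)$ satisfying, in addition, the fibre (Pohozaev) relation
\[
\|\nabla v_{n}\|_{L^{p}(\R^{N})}^{p}-\frac{Na^{p}}{p}-\mu\beta_{q}\int_{\R^{N}}|v_{n}|^{q}\,dx\to0 .
\]
Combining $J(v_{n})\to c_{\mu}(a)$ with this relation and the logarithmic Sobolev bound, and controlling the pieces $\int F_{1}(v_{n})$ and $\int F_{2}(v_{n})$ through the decomposition \eqref{decomposition} as in the variational set-up already used for Theorems \ref{maintheorem1} and \ref{maintheorem3}, shows that $\{v_{n}\}$ is bounded in $W^{1,p}(\R^{N})$ and in $X$, with $\|\nabla v_{n}\|_{L^{p}}^{p}$ bounded away from $0$.

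The crux is compactness, since \eqref{mainequation4} carries no potential and, as stressed after Theorem \ref{maintheorem3}, even the embedding $X_{r}\hookrightarrow L^{p}(\R^{N})$ need not be compact. I would first reduce to the radial class: $J$ and $S(a)$ are $O(N)$--invariant, so by the principle of symmetric criticality the whole minimax scheme may be carried out on $S(a)\cap X_{r}$, the endpoints of the mountain being taken radial (the Schwarz symmetrization of the $\mu=0$ minimizer, and $u_{0}$). Passing to a subsequence, $v_{n}\rightharpoonup v$ in $W^{1,p}_{r}(\R^{N})$, and the compact Strauss embedding $W^{1,p}_{r}(\R^{N})\hookrightarrow\hookrightarrow L^{q}(\R^{N})$, valid for $p<q<p^{*}$, gives $v_{n}\to v$ in $L^{q}(\R^{N})$; this disposes of the $\mu$--term and, via \eqref{decomposition}, of the superlinear part $\int F_{2}$ of the logarithmic term. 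The remaining possibility of $L^{p}$--mass escaping to infinity is excluded by combining the Pohozaev relation above, the analytic trick of Lemma \ref{attained}, and a strict energy comparison between $c_{\mu}(a)$ and the threshold coming from the $\mu=0$ minimization of Theorem \ref{limittheorem}; this is exactly where the quantitative restrictions $a>\underline a^{*}$ and $\mu<\underline\mu^{*}$ are used. Hence $\|v_{n}\|_{L^{p}}\to\|v\|_{L^{p}}=a$, so $v\in S(a)$ and $v_{n}\to v$ strongly. Then $v$ is a constrained critical point of $J$, the Lagrange rule furnishes $\underline\lambda^{*}\in\R$ with $(v,\underline\lambda^{*})$ a weak solution of \eqref{mainequation4}, and replacing $v$ by $|v|$ (which alters neither $J$ nor $S(a)$) and invoking the strong maximum principle for $\Delta_{p}$ — after the customary regularization of the logarithmic nonlinearity near the origin — gives $\underline u^{*}:=v>0$ on $\R^{N}$.

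I expect the genuine obstacle to be this last step: establishing the strict energy inequality separating $c_{\mu}(a)$ from the escaping--mass threshold for the non--homogeneous operator $\Delta_{p}$ in the presence of the logarithmic nonlinearity, which here can be handled only through the artificial splitting $F_{1},F_{2}$ of \eqref{decomposition} rather than a single $N$--function as in the $p=2$ theory; it is precisely this that forces the restrictions $a>\underline a^{*}$, $\mu<\underline\mu^{*}$ and makes the argument different from the one in \cite{Alves1}.
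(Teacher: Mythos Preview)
Your route is entirely different from the paper's, and the step you yourself flag as the ``genuine obstacle'' is a real gap. The paper never runs a mountain pass. Instead, for a fixed $\bar q\in(p,\bar p)$ it truncates the supercritical power, setting $f_{\mathcal R}(t)=|t|^{q-2}t$ for $|t|\le\mathcal R$ and $f_{\mathcal R}(t)=\mathcal R^{q-\bar q}|t|^{\bar q-2}t$ for $|t|\ge\mathcal R$; the auxiliary problem \eqref{mainequation5} is then $L^{p}$-subcritical and Corollary~\ref{corollary} (i.e.\ Theorem~\ref{maintheorem3}) supplies a positive minimizer $u_{\mathcal R}^{*}\in S(a)$ for every $\mathcal R>0$ once $a>\underline a^{*}$, with $\underline a^{*}$ independent of $\mathcal R$ and $\mu$. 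The only remaining work is Lemma~\ref{estimate}: taking $\underline\mu^{*}(\mathcal R)=\mathcal R^{\bar q-q}$ one gets $|\nabla u_{\mathcal R}^{*}|_{p}\le C$ with $C$ independent of $\mathcal R$, and the Moser iteration of Step~1 in Theorem~\ref{Pohozaev} upgrades this to $|u_{\mathcal R}^{*}|_{\infty}\le M$ uniformly in $\mathcal R$. Fixing any $\mathcal R>M$ removes the truncation and $(u_{\mathcal R}^{*},\lambda_{\mathcal R}^{*})$ solves \eqref{mainequation4}. No compactness at a positive energy level is ever needed.

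Your scheme, by contrast, requires strong convergence of a constrained Palais--Smale sequence at the level $c_{\mu}(a)>0$, and the tools you invoke do not deliver it. Lemma~\ref{attained} works only through the subadditivity of the \emph{infimum} (Lemma~\ref{decreasing}) and its continuity in $a$ (Lemma~\ref{continuous}); neither property transfers to a mountain-pass value. The paper, in the paragraphs immediately following the statement of Theorem~\ref{maintheorem4}, records that both the Poho\v{z}aev-manifold minimization $\inf_{\mathcal P(a)}J$ and the local minimization on $\{|\nabla u|_{p}<\mathcal R\}$ fail for precisely this reason: ``the absence of the monotone property \dots\ results in the lack of compactness''. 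Your ``strict energy comparison with the $\mu=0$ minimization of Theorem~\ref{limittheorem}'' does not close the gap either: that minimum is negative while $c_{\mu}(a)>0$, and along any fixed fibre $t\mapsto J(u\star t)$ the local maximum tends to $+\infty$ as $\mu\to0^{+}$ (the descending branch disappears when $\mu=0$), so shrinking $\mu$ pushes $c_{\mu}(a)$ up rather than down. Without a genuine monotonicity inequality for $c_{\mu}(\cdot)$, or an a~priori bound forcing the limit multiplier below $1$ so that the remainder identity for $w_{n}=v_{n}-v$ has a sign, the $L^{p}$-mass-loss scenario in your radial argument cannot be excluded and the proof is incomplete.
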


Given a $u\in S(a)$, then
$u_t(\cdot)\triangleq t^{\frac{N}{p}}u(t\cdot)$ for all $t>0$
and hence, it is elementary to arrive at the calculations
$$
J(u_t) =\frac{t^p}{p}\int_{\mathbb{R}^{N}} |\nabla u |^{p} dx+\frac{a^p(1-N\log t)}{p}
-\frac{1}{p}\int_{\mathbb{R}^{N}}|u |^{p}\log|u |^{p}dx
-\frac{\mu}{q}t^{N\left(\frac qp-1\right)}\int_{\mathbb{R}^{N}}|u |^{q} dx\to-\infty
$$
as $t\to+\infty$ which indicates that
$m(a)=-\infty$ for all $a>0$.
To get around this obstacle,
in a similar spirit of
\cite{BartschSoave},
one may depend on the following Poho\u{z}aev manifold
$$
\mathcal{P}(a)=\{u\in S(a):P(u)=0\},
$$
where the variational functional $P:S(a)\to\R$ is defined by
$$
P(u)=\int_{\mathbb{R}^{N}} |\nabla u |^{p} dx-\frac{ N }{p}a^p
- \mu N\left(\frac 1p-\frac{1}{q}\right)\int_{\mathbb{R}^{N}}|u |^{q} dx.
$$
According to the discussions in Section \ref{Preliminaryresults} below,
we can conclude that $\mathcal{P}$ is a natural constraint
since $P(u)\equiv0$ provided that $u\in X$ is a nontrivial weak solution of
the first equation in Problem \eqref{mainequation4}.
Unfortunately, we fail to argue as \cite{SY} to consider the following minimization problem
$$
m_p(a)=\inf_{u\in \mathcal{P}(a)}J(u)
$$
since it seems impossible to get the compactness of its corresponding minimizing sequence.
Actually, we even cannot make sure that $m_p(a)\leq0$. Thereby, it differs evidently from the counterparts in
\cite{SY}. Besides, we also try to study the following problem
$$
m_p^R(a)=\inf_{u\in S(a)\cap\{\|\nabla u\|_{L^p(\R^N)}<\mathcal{R}\}}J(u)
$$
combined with the Ekeland variational principle.
Although one can deduce that $m_p^\mathcal{R}(a)<0$ for some suitable $\mathcal{R}>0$,
the absence of the monotone property with respect to $m_p(a)$ results in
the lack of compactness and it is still hard to show that
$m_p^\mathcal{R}(a)$ can be attained.

In consideration of the explanations exhibited above,
motivated by the ideas introduced in \cite{AS},
we shall rely heavily on the so-called truncation argument.
Let us introduce it step by step.
For every $\mathcal{R}>0$ and $p<\bar{q}< p+\frac{p^2}N$, we define
the auxiliary function $f_{\mathcal{R}}:\mathbb{R} \to \mathbb{R}$ given by
$$
f_{\mathcal{R}}(t)=
\left\{
\begin{array}{ll}
 |t|^{q-2}t, &\quad |t| \leq \mathcal{R}, \\
 \mathcal{R}^{q-\bar{q}}|t|^{\bar{q}-2}t, &\quad |t| \geq \mathcal{R}.
\end{array}
\right.
$$
Using the function $f_{\mathcal{R}}$, we
then contemplate the following auxiliary problem
\begin{equation}\label{mainequation5}
 \left\{
   \begin{array}{ll}
   \displaystyle -  \Delta_p u =\lambda |u|^{p-2}u+|u|^{p-2}u\log|u|^p+\mu f_{\mathcal{R}}(u)
~\text{in}~\R^N, \\
    \displaystyle     \int_{\R^N}|u|^pdx=a^p,
   \end{array}
 \right.
\end{equation}
 whose energy functional $J_\mathcal{R}:X\to \R$ is given by
$$
J_\mathcal{R}(u)=\frac{1}{p}\int_{\mathbb{R}^{N}}\left(|\nabla u |^{p}+|u |^{p}\right)dx
-\frac{1}{p}\int_{\mathbb{R}^{N}}|u |^{p}\log|u |^{p}dx
- \mu \int_{\mathbb{R}^{N}}F_\mathcal{R}(u) dx,
$$
where and in the sequel $F_\mathcal{R}(t)=\int_{0}^{t}f_\mathcal{R}(s)ds$.
Due to the definition of $f_{\mathcal{R}}$, one has that
\begin{equation}\label{ff}
 |f_{\mathcal{R}}(t)|\leq \mathcal{R}^{q-\bar{q}}|t|^{\bar{q}-1},~\forall t\in\R.
\end{equation}
When the constant $\mathcal{R}>0$ is fixed, it follows from \eqref{ff} that
 $f_{\mathcal{R}}$ has a $L^p$-subcritical growth because $p<\bar{q}< p+\frac{p^2}N$.
 In other words, adopting Theorem \ref{maintheorem3},
 we immediately have the results below.
\begin{corollary}\label{corollary}
Suppose that $2\leq p<N$, $\mu>0$ and $p<\bar{q}< p+\frac{p^2}N$. Then,
for every fixed $\mathcal{R}>0$,
there is an $a^*>0$ independent of $\mathcal{R}$ and
$\mu$ such that, for all fixed $a>a^*$,
Problem \eqref{mainequation5} admits a couple of weak solution
 $({u}_{\mathcal{R}}^*,{\lambda}_{\mathcal{R}}^*)\in X\times\R$ with $u_{\mathcal{R}}^*(x)>0$ for all $x\in\R^N$.
\end{corollary}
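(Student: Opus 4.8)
\medskip
\noindent\textbf{Strategy of proof of Corollary \ref{corollary}.}
The plan is to obtain Corollary \ref{corollary} as a direct consequence of Theorem \ref{maintheorem3} applied to the auxiliary problem \eqref{mainequation5}. Observe first that \eqref{mainequation5} is exactly \eqref{mainequation4} with the pure power $\mu|u|^{q-2}u$ replaced by $\mu f_{\mathcal{R}}(u)$, and that, once $\mathcal{R}>0$ is frozen, estimate \eqref{ff} shows that $f_{\mathcal{R}}$ has precisely the $L^p$-subcritical growth of order $\bar{q}-1$, with $p<\bar{q}<p+\frac{p^2}{N}$, enjoyed by a mass-subcritical power $|u|^{\bar{q}-2}u$. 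Hence the first step is to re-examine the proof of Theorem \ref{maintheorem3} and record that it nowhere uses the explicit form $|u|^{q-2}u$: only (i) the continuity of the nonlinearity and the sign $F_{\mathcal{R}}\ge0$, (ii) the Gagliardo--Nirenberg inequality \eqref{GN} with an exponent strictly below $\bar{p}=p+\frac{p^2}{N}$, which produces a term of order $\|\nabla u\|_{L^p(\R^N)}^{\bar{q}\beta_{\bar{q}}}$ with $\bar{q}\beta_{\bar{q}}<p$ and thus the coercivity and the boundedness from below of $J_{\mathcal{R}}$ on $S(a)$, and (iii) the standard $p$-Laplacian machinery (elliptic regularity, strong maximum principle, Schwarz symmetrization) enter the argument. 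Granting this, $m_{\mathcal{R}}(a):=\inf_{u\in S(a)}J_{\mathcal{R}}(u)$ is attained, for $a$ large, by a positive, radially symmetric and radially decreasing $u_{\mathcal{R}}^*$, and the Lagrange multiplier rule furnishes $\lambda_{\mathcal{R}}^*\in\R$ with $(u_{\mathcal{R}}^*,\lambda_{\mathcal{R}}^*)\in X\times\R$ solving \eqref{mainequation5} and $u_{\mathcal{R}}^*>0$ on $\R^N$.

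The genuinely new point — and the one I expect to be the main obstacle — is that the threshold $a^*$ may be taken independent of $\mathcal{R}$ and $\mu$. The device is to feed every estimate that determines the threshold through the inequality $F_{\mathcal{R}}\ge0$, which gives $J_{\mathcal{R}}(u)\le J_0(u)$ on $S(a)$, where $J_0$ denotes the functional \eqref{JJ} with $\mu=0$; in particular $m_{\mathcal{R}}(a)\le m_0(a)$ for every $\mathcal{R},\mu$. For $J_0$ the homogeneity of the logarithm yields the exact identity $m_0(a)=a^p\big(m_0(1)-\log a\big)$ — test $au$ with $u\in S(1)$ and use $\int_{\R^N}|au|^p\log|au|^p\,dx=a^p\log(a^p)+a^p\int_{\R^N}|u|^p\log|u|^p\,dx$ — so $m_0(a)<0$ precisely for $a>a^*:=\exp(m_0(1))$, a number depending only on $N$ and $p$. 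Thus, for all $a>a^*$, $m_{\mathcal{R}}(a)\le m_0(a)<0$ uniformly in $\mathcal{R},\mu$, which rules out the vanishing of a minimizing sequence for $m_{\mathcal{R}}(a)$ (a vanishing sequence would force $\int F_{\mathcal{R}}(u_n)\to0$ and $\int_{|u_n|\ge1}|u_n|^p\log|u_n|^p\to0$, hence $\liminf J_{\mathcal{R}}(u_n)\ge a^p/p>0$). The strict sub-additivity $m_{\mathcal{R}}(a)<m_{\mathcal{R}}(\alpha)+m_{\mathcal{R}}(\beta)$ whenever $\alpha^p+\beta^p=a^p$, $0<\alpha,\beta<a$, then follows from the convexity of $F_{\mathcal{R}}$ on $[0,+\infty)$ and the strict sub-additivity $m_0(a)<m_0(\alpha)+m_0(\beta)$ of the purely logarithmic functional (immediate from the identity above and $x\log x+y\log y<(x+y)\log(x+y)$), and excludes dichotomy. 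After a translation the minimizing sequence converges strongly in $X$ to the desired minimizer; since $a^*$ was extracted from $J_0$ alone, it does not depend on $\mathcal{R}$ or $\mu$.

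The delicate bookkeeping is to keep the $\mathcal{R}$-dependent constant $\mathcal{R}^{q-\bar{q}}$ in \eqref{ff} out of the threshold: it blows up as $\mathcal{R}\to+\infty$, so it may be used only where a finite (possibly large) bound is harmless — to get the $W^{1,p}(\R^N)$-boundedness of minimizing sequences and the finiteness of all the integrals at play — and never in the sign or strict sub-additivity steps, which always fall back on $0\le F_{\mathcal{R}}$ and on the pure logarithmic functional $J_0$. The positivity, radial monotonicity, regularity and the relation producing $\lambda_{\mathcal{R}}^*$ are then read off verbatim from the proof of Theorem \ref{maintheorem3}, which closes the reduction of Corollary \ref{corollary} to that theorem.
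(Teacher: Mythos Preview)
Your reduction to Theorem \ref{maintheorem3} is exactly the paper's approach: the paper treats Corollary \ref{corollary} as an immediate application of Theorem \ref{maintheorem3} to $J_{\mathcal{R}}$, noting only that once $\mathcal{R}$ is frozen \eqref{ff} forces $L^{p}$-subcritical growth. Your explicit identification of the threshold as $a^{*}=\exp(m_{0}(1))$ via the scaling identity $m_{0}(a)=a^{p}\big(m_{0}(1)-\log a\big)$ is sharper than what the paper writes (the paper merely asserts, in the paragraph following the Corollary, that $J_{\mathcal{R}}(t\psi)\to-\infty$ uniformly in $\mathcal{R},\mu$ because $F_{\mathcal{R}}\ge0$), but the underlying device---compare $J_{\mathcal{R}}$ with the purely logarithmic functional $J_{0}$ through $F_{\mathcal{R}}\ge0$---is the same.

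There is, however, one soft spot in your dichotomy step: the implication ``convexity of $F_{\mathcal{R}}$ on $[0,\infty)$ \emph{plus} strict sub-additivity of $m_{0}$ $\Rightarrow$ strict sub-additivity of $m_{\mathcal{R}}$'' is not justified and does not obviously hold. Convexity with $F_{\mathcal{R}}(0)=0$ gives only that $t\mapsto F_{\mathcal{R}}(t)/t$ is nondecreasing, whereas a scaling argument in the style of Lemma \ref{monotone} needs $F_{\mathcal{R}}(\xi t)\ge\xi^{p}F_{\mathcal{R}}(t)$ for $\xi>1$, i.e.\ that $t\mapsto F_{\mathcal{R}}(t)/t^{p}$ is nondecreasing. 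The latter is in fact true---check the two branches of $F_{\mathcal{R}}$ separately, using $q>\bar{q}>p$---and feeding it into the proof of Lemma \ref{monotone} yields $\tfrac{a_{1}^{p}}{a_{2}^{p}}\,m_{\mathcal{R}}(a_{2})<m_{\mathcal{R}}(a_{1})$ for $a_{1}<a_{2}$, hence strict sub-additivity. Alternatively, you can sidestep the issue entirely by following the paper's radial route for Theorem \ref{maintheorem3} (Lemmas \ref{aut1}--\ref{attained}), where only the \emph{non-strict} inequality of Lemma \ref{decreasing} is used and compactness comes from the radial embedding; this also removes the slight tension in your outline between invoking Schwarz symmetrization and then speaking of translations.
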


With Corollary \ref{corollary} in hands,
the reader is invited to see that if $u^*_\mathcal{R }\in X$ is a solution
of Problem \eqref{mainequation5} with $|u^*_\mathcal{R}|_\infty \leq \mathcal{R}$,
then $u^*_\mathcal{R}$ is a solution for Problem \eqref{mainequation4}. Have this in mind, our main goal
is to deduce that given an $\mathcal{R}>0$, there are $\underline{a}^*>0$ (independent of $\mathcal{R}$ and
$\mu$) and $\underline{\mu}^*=\underline{\mu}^*(\mathcal{R})>0$ such that if $a>\underline{a}^*$
 and $\mu\in (0,\underline{\mu}^*)$, then $|u^*_\mathcal{R}|_\infty \leq \mathcal{R}$.

We remark that, d'Avenia, Montefusco and Squassina \cite{d'Avenia}
handled the existence of infinitely many solutions for a class of
logarithmic Schr\"{o}dinger equations. The authors pointed out their
multiplicity results are also adapted, using \cite{ddb}, to the following logarithmic $p$-Laplacian equation
\begin{equation}\label{Aveniaequation}
-\Delta_pu=\lambda|u|^{p-2}u+|u|^{p-2}u\log |u|^p,~ u\in W^{1,p}(\mathbb{R}^n),
\end{equation}
 where $\lambda\in\R$ is a fixed constant. On the other hand here we mainly focus on existence
 of families of solutions concentrating around local minima of $V$ in the semiclassical limit $\varepsilon\to 0$.

Again the results in Theorems \ref{maintheorem2}, \ref{maintheorem3} and \ref{maintheorem4}
 are new under the $p$-Laplacian settings
with the logarithmic nonlinearity.
The striking novelty is the correct setting of functional space
in which we can treat the problems variationally.
Unfortunately, we cannot deal with the case $1<p<2$ so far ant it remains open,
see Lemma \ref{F1} below.
In addition, there are some other technical calculations due to
the $p$-Laplacian operator in the proofs of the main results.

The outline of the paper is organized as follows. In Section \ref{Preliminaryresults}, we mainly exhibit some preliminary
results. Sections \ref{semiclassical} and \ref{autonomous} are devoted to the non-autonomous
and autonomous logarithmic $p$-Laplacian equations, respectively. Finally, there are some further comments in Section \ref{comments}.
\\\\
 \textbf{Notations.} From now on in this paper, otherwise mentioned, we use the following notations:
\begin{itemize}
	\item   $C,C_1,C_2,...$ denote any positive constant, whose value is not relevant.
	 	\item      Let $(Z,\|\cdot\|_Z)$ be a Banach
space with its dual space $(Z^{*},\|\cdot\|_{Z^{*}})$.
\item  $|\cdot|_p$ denotes the usual norm of the Lebesgue measurable space in $\R^N$, for all $p \in [1,+\infty]$.
 \item $o_{n}(1)$ denotes the real sequence with $o_{n}(1)\to 0$
 as $n \to +\infty$.
\item $``\to"$ and $``\rightharpoonup"$ stand for the strong and
 weak convergence in the related function spaces, respectively.
\end{itemize}

\section{Variational setting and preliminaries}\label{Preliminaryresults}

In this section, we would like to  recommend some preliminary results.
First of all, let us introduce
some fundamental concepts and properties concerning the Orlicz
spaces. For the more details, please refer to \cite{RR} for example.

\begin{definition}\label{definition}
  An $N$-function is a continuous function $\Phi:\mathbb{R}\to[0,+\infty)$ that satisfies the following
 conditions:
\begin{itemize}
   \item[(i)] $\Phi$ is a convex and even function;
   \item[(ii)] $\Phi( t) = 0\Longleftrightarrow t= 0$;
   \item[(iii)] $\lim\limits_{t\to 0}\frac{\Phi(t)}t= 0$ and $\lim\limits_{t\to \infty}\frac{\Phi(t)}t=+\infty$.
 \end{itemize}
\end{definition}

 We say that an $N$-function $\Phi$ satisfies the $\Delta_{2}$-condition, denoted by $\Phi\in(\Delta_2)$, if
 $$
\Phi(2t)\leq k\Phi(t),~\forall t\geq t_0,
$$
for some constants $k> 0$ and $t_0\geq 0$.

The conjugate function $\tilde{\Phi}$ associated with $\Phi$ is obtained through the Legendre's transformation, defined as
$$
\tilde{\Phi}(s)=\max_{t\geq0}\{st-\Phi(t)\},~\text{for}\:s\geq0.
$$
It can be shown that that $\tilde{\Phi}$ is also an $N$-function. The functions $\Phi$ and $\tilde{\Phi}$ are mutually complementary that is,
$\tilde{\tilde{\Phi}}=\Phi$.

For an open set $\Omega\subset\mathbb{R}^N$, we define the Orlicz space associated with the $N$-function $\Phi$ as follows
$$
L^\Phi(\Omega)=\left\{u\in L_{\mathrm{loc}}^1(\Omega):\int_{\Omega}\Phi\left(\frac{|u|}{\lambda}\right)dx<+\infty,~\text{for some}\:\lambda>0\right\},
$$
which is a Banach space endowed with the Luxemburg norm given by
$$
\|u\|_{\Phi}=\inf\left\{\lambda>0:\int_{\Omega}\Phi\left(\frac{|u|}{\lambda}\right)dx\leq1\right\}.
$$
Associated with the Orlicz Spaces, there also holds the H\"{o}lder and Young type inequalities, namely
$$
st\leq\Phi(t)+\tilde{\Phi}(s),\quad\forall s,t\geq0
$$
and
$$
\left|\int_{\Omega}uvdx\right|\leq2\|u\|_{\Phi}\|v\|_{\tilde{\Phi}},\mathrm{~for~}\forall u\in L^{\Phi}(\Omega)\mathrm{~and~}\forall v\in L^{\bar{\Phi}}(\Omega).
$$
The space $L^\Phi(\Omega)$ is reflexive and separable provided that
$\Phi,\tilde{\Phi}\in(\Delta_2)$. Moreover, the $\Delta_{2}$-condition implies that
$$
L^\Phi(\Omega)=\left\{u\in L_{\mathrm{loc}}^1(\Omega):\int_{\Omega}\Phi(|u|)dx<+\infty\right\}
$$
and
$$
u_{n}\rightarrow u\mathrm{~in~}L^{\Phi}(\Omega)\Longleftrightarrow\int_{\Omega}\Phi\left(|u_{n}-u|\right)dx\rightarrow0.
$$
We then recall an significant relation involving $N$-functions that will be adopted later. Let $\Phi$ be
an $N$-function of $\mathcal{C}^1$ class and $\tilde{\Phi}$ is its conjugate function. Suppose that
\begin{equation}\label{1}
1<l\leq\frac{\Phi^{\prime}(t)t}{\Phi(t)}\leq m,\:t\neq0,
\end{equation}
then $\Phi,\tilde{\Phi}\in(\Delta_{2})$.
Finally, we consider the functions
$$
\xi_{0}(t)=\min\{t^{l},t^{m}\}\mathrm{~and~}\xi_{1}(t)=\max\{t^{l},t^{m}\},\:t\geq0,
$$
it is possible to verify that, using \eqref{1}, the function $\Phi$ satisfies the inequality below
\begin{equation}\label{2}
\xi_{0}\left(\|u\|_{\Phi}\right)\leq\int_{\mathbb{R}^{N}}\Phi(u)\leq\xi_{1}\left(\|u\|_{\Phi}\right),\:\forall u\in L^{\Phi}(\Omega).
\end{equation}

Inspired by \cite{Alves1,Alves2,Alves3}, we define the functions $F_1$ and $F_2$ as follows
$$
\left.F_1 (s)=\left
\{\begin{array}{lr}
F_1 (-s), & s\leq0,\\
-\frac{1}{p}s^p\log s^p, &0<s<(p-1)\delta,\\
 -\frac{1}{p}s^p\big[\log\big((p-1)\delta\big)^p+p+1\big]+p\delta s^{p-1}-\frac{1}{p(p-1)}\big((p-1)\delta\big)^p,&s\geq(p-1)\delta,
\end{array}\right.\right.
 $$
and
$$
\left.F_2(s)=\left\{\begin{array}{ll}0,&|s|\le(p-1)\delta,\\
\frac{1}{p}|s|^p\log\big(|s|^p/((p-1)\delta)^p\big)+p\delta|s|^{p-1}-\frac{p+1}{p}|s|^p-\frac{1}{p(p-1)}\big((p-1)\delta\big)^p,&|s|\ge(p-1)\delta,
\end{array}\right.\right.
$$
where $\delta>0$ is sufficiently small but fixed,
then we reach the decomposition \eqref{decomposition}.
Moreover, $F_1$ and $F_2$ satisfy the following properties:
\begin{itemize}
  \item[$(\textbf{P}_1)$] $F_1$ is even with $F_1^{\prime}(s)s\geq0$ and $F_1(s)\geq0$ for all $s\in\mathbb{R}$.
   Furthermore, $F_1\in \mathcal{C}^1(\mathbb{R},\mathbb{R})$ is convex if $\delta\approx0^+$;
  \item[$(\textbf{P}_2)$] $F_{2}\in \mathcal{C}^{1}( \mathbb{R},\mathbb{R})\cap \mathcal{C}^{2}((\delta,+\infty),\mathbb{R})$ and for each $\tilde{q}\in(p,p^{*})$,
  there exists a $C_{\tilde{q}}>0$ such that
$$
|F_2^{\prime}(s)|\leq C_{\tilde{q}}|s|^{\tilde{q}-1},\quad\forall s\in\mathbb{R};
$$
  \item[$(\textbf{P}_3)$] $s\mapsto\frac{F_2^{\prime}(s)}{s^{p-1}}$ is a nondecreasing function for $s>0$ and a strictly increasing function for $s>\delta$;
  \item[$(\textbf{P}_4)$] $\operatorname*{lim}\limits_{s\to\infty}\frac{F_{2}^{\prime}(s)}{s^{p-1}} =\infty$.
\end{itemize}

As a counterpart of the results explored in \cite{Alves1,Alves2,Alves3},
we conclude the following result which is nontrivial in contrast to the cited papers.

 \begin{lemma}\label{F1}
 The function $F_{1}$ is an $N$-function. Moreover, if $2\leq p<N$,
  it holds that $F_{1},\tilde{F}_{1}\in(\Delta_{2})$.
\end{lemma}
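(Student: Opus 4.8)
The plan is to check the three defining properties of an $N$-function for $F_1$ directly, and then to deduce the $\Delta_2$-statement from the criterion \eqref{1}: it suffices to produce constants $1<l\le m<+\infty$ with $l\le F_1'(s)s/F_1(s)\le m$ for all $s\ne 0$, after which $F_1,\tilde{F}_1\in(\Delta_2)$ follows automatically. Throughout I would fix $\delta>0$ small, so that in particular $(p-1)\delta<1$ and $a:=-\tfrac1p\big[\log((p-1)\delta)^p+p+1\big]=-\log((p-1)\delta)-\tfrac{p+1}{p}>0$; abbreviating also $B:=p\delta>0$ and $C:=\tfrac1{p(p-1)}\big((p-1)\delta\big)^p>0$, the outer branch takes the convenient polynomial form $F_1(s)=as^p+Bs^{p-1}-C$ for $s\ge (p-1)\delta$. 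Continuity of $F_1$ and of $F_1'$ at $s=(p-1)\delta$ with the inner branch $-s^p\log s$ is a short direct computation already implicit in \eqref{decomposition} and in property $(\textbf{P}_1)$, which I will invoke for the $\mathcal{C}^1$-regularity, evenness and convexity of $F_1$.

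To see that $F_1$ is an $N$-function: evenness and convexity are granted by $(\textbf{P}_1)$; one has $F_1(0)=0$, and $F_1(s)>0$ for $s>0$ because on $(0,(p-1)\delta)$ we have $F_1(s)=-s^p\log s>0$ (here $0<s<(p-1)\delta<1$), while on $[(p-1)\delta,+\infty)$ the derivative $F_1'(s)=pas^{p-1}+(p-1)Bs^{p-2}$ is strictly positive, so $F_1$ is increasing and $F_1(s)\ge F_1\big((p-1)\delta\big)=-\big((p-1)\delta\big)^p\log((p-1)\delta)>0$. Finally $F_1(s)/s=-s^{p-1}\log s\to 0$ as $s\to 0^+$ and $F_1(s)/s=as^{p-1}+Bs^{p-2}-C/s\to+\infty$ as $s\to+\infty$; both limits use $p\ge 2$ (so that $p-1\ge 1$) together with $a>0$.

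The heart of the argument is the two-sided bound on $F_1'(s)s/F_1(s)$, carried out separately on the two branches. On $(0,(p-1)\delta)$, from $F_1(s)=-s^p\log s$ and $F_1'(s)=-s^{p-1}(p\log s+1)$,
\[
\frac{F_1'(s)s}{F_1(s)}=p+\frac1{\log s}\in\Big(p-\tfrac1{|\log((p-1)\delta)|},\;p\Big).
\]
On $[(p-1)\delta,+\infty)$,
\[
\frac{F_1'(s)s}{F_1(s)}=\frac{pas^p+(p-1)Bs^{p-1}}{as^p+Bs^{p-1}-C};
\]
the elementary inequality $pC<Bs^{p-1}$, valid for $s\ge(p-1)\delta$, yields the upper estimate $<p$, while discarding the negative term $-C$ in the denominator gives the lower estimate $>\dfrac{pas^p+(p-1)Bs^{p-1}}{as^p+Bs^{p-1}}=p-\dfrac{B}{as+B}\ge p-\dfrac{p}{(p-1)a+p}$, and the last quantity is $>1$ exactly because $a>0$ and $p\ge 2$. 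Hence, shrinking $\delta$ once more so that $|\log((p-1)\delta)|>\tfrac1{p-1}$, the choice $l:=\min\big\{\,p-\tfrac1{|\log((p-1)\delta)|},\,p-\tfrac{p}{(p-1)a+p}\,\big\}$ and $m:=p$ gives $1<l\le F_1'(s)s/F_1(s)\le m$ for every $s\ne 0$, and the criterion \eqref{1} then forces $F_1,\tilde{F}_1\in(\Delta_2)$.

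The step I expect to be genuinely delicate is the uniform lower bound $l>1$: the ratio $F_1'(s)s/F_1(s)$ must be kept strictly above $1$ simultaneously near the origin, where it degenerates to $p+1/\log s$, and on the polynomial branch, where the correction $-B/(as+B)$ is worst at the junction $s=(p-1)\delta$. Both are comfortably controlled for $p\ge 2$ after taking $\delta$ small, but it is precisely at this point that $p\ge 2$ is essential and the range $1<p<2$ resists. The remaining items — the $\mathcal{C}^1$-matching at $s=(p-1)\delta$, the positivity $F_1'>0$ on the outer branch, and the inequality $pC<Bs^{p-1}$ — are routine verifications.
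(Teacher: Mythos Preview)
Your proposal is correct and follows essentially the same route as the paper's own proof: verify the $N$-function axioms directly, then establish the two-sided bound $1<l\le F_1'(s)s/F_1(s)\le p$ via the criterion \eqref{1}, treating the inner branch (where the ratio is $p+1/\log s$) and the outer branch separately. Your outer-branch estimates are in fact slightly sharper and more explicit than the paper's --- the paper only records $F_1'(s)s/F_1(s)>p-1$ and appeals to $p\ge 2$, whereas you extract the concrete lower bound $p-\tfrac{p}{(p-1)a+p}$ by dropping $-C$ from the denominator --- but the structure and key ideas are identical.
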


 \begin{proof} Exploiting some elementary calculations, one could easily certify that $F_1$ satisfies (I)-(III)
 of Definition \ref{definition}. To arrive at the proof, we shall verify that $F_1$ satisfies a similar relation in \eqref{1}
and so it reveals that $F_{1},\tilde{F}_{1}\in(\Delta_{2})$. Firstly, we see that
 $$
\left.F_1^\prime(s)=\left
\{\begin{array}{lr}
-(1+\log s^p)s^{p-1},&0<s<(p-1)\delta,\\
-s^{p-1}\big[\log\big((p-1)\delta\big)^p+p+1\big]+p(p-1)\delta s^{p-2},&s\geq(p-1)\delta.
\end{array}\right.\right.
 $$
 Next, we shall analyze the cases $0<s<( p- 1) \delta$ and $s\geq( p- 1) \delta$ separately.
\vskip3mm
\textbf{Case 1.}
$0< s< ( p- 1) \delta$.
\vskip3mm
In this case, it is simple to calculate that
\[
\frac{F_{1}^{\prime}(s)s}{F_{1}(s)}=p+\frac{1}{\log s},
\]
which indicates that there is an $l_1 > 1$ such that
$$
1<l_{1}\leq\frac{F_{1}^{\prime}(s)s}{F_{1}(s)}\leq m_{1}\triangleq\sup_{0<s<\delta}\left(p+\frac{1}{\log s}\right)\leq p,
$$
for some sufficiently small $\delta > 0$.
\vskip3mm
\textbf{Case 2.} $s\geq ( p- 1) \delta$.
\vskip3mm
In this case, we continue to calculate that
$$
\frac{F_1'(s)s}{F_1(s)}=\frac{-s^p\big[\log\big((p-1)\delta\big)^p+p+1\big]+p(p-1)\delta s^{p-1}}
{-\frac{1}{p}s^p\big[\log\big((p-1)\delta\big)^p+p+1\big]+p\delta s^{p-1}-\frac{1}{p(p-1)}\big((p-1)\delta\big)^p}
$$
From which, we derive that $\sup\limits_{s\geq(p-1)\delta}\frac{F_{1}^{\prime}(s)s}{F_{1}(s)}\leq p$
since for all $s\geq ( p- 1) \delta$, there holds
$$
\frac{F_{1}^{\prime}(s)s}{F_{1}(s)}
  \leq \frac{-s^p\big[\log\big((p-1)\delta\big)^p+p+1\big]+p(p-1)\delta
s^{p-1}+\big[p\delta s^{p-1}-\frac{1}{p-1}\big((p-1)\delta\big)^p\big]}{-\frac{1}{p}s^p\big[\log\big((p-1)\delta\big)^p+p+1\big]
+p\delta s^{p-1}-\frac{1}{p(p-1)}\big((p-1)\delta\big)^p}.
$$
Obviously, one can deduce that
$$\operatorname*{lim}_{s\to+\infty}\frac{F_{1}^{\prime}(s)s}{F_{1}(s)} =p\mathrm{~and~}\frac{F_{1}^{\prime}(s)s}{F_{1}(s)}>p-1,~\forall s>0,$$
and so we obtain
 $$
 p-1<\inf_{s>0}\frac{F_{1}^{\prime}(s)s}{F_{1}(s)}.$$
The last inequality together with $p>2$ guarantees the existence of an $l\in(1,2)$ such that
$$
1<l\leq\frac{F_{1}^{\prime}(s)s}{F_{1}(s)}\leq p,\:\forall s>0.
$$
Since $F_{1}$ is an even function, then the inequality holds true for any $s\neq0.$ The proof is completed.
 \end{proof}

Replacing $\Phi$ and $\Omega$ in the above discussions with $F_1$ and $\R^N$, respectively,
we conclude the Orlicz Space $L^{F_1}(\mathbb{R}^N)$
and it is standard to prove the following result.

  \begin{corollary}
The functional $\Theta:L^{F_1}(\mathbb{R}^N)\to\R$ given by $u\mapsto\int_{\mathbb{R}^N}F_1(u)dx$
is of class $\mathcal{C}^1(L^{F_1}(\mathbb{R}^N))$ with
$$
\Theta'(u)v=\int_{\mathbb{R}^N}F_1'(u)vdx,~\forall u,v\in L^{F_1}(\mathbb{R}^N),
$$
where $L^{F_1}(\mathbb{R}^N)$ denotes the Orlicz space associated with $F_1$ endowed with the Laremburg norm $\|\cdot\|_{F_1}$.
\end{corollary}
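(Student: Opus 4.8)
The plan is to prove the statement in the classical two–step fashion: first establish that $\Theta$ is Gâteaux differentiable with derivative $u\mapsto\int_{\R^N}F_1'(u)\,dx\cdot(\,\cdot\,)$ as claimed, and then upgrade Gâteaux to Fréchet (i.e.\ $\mathcal{C}^1$) by showing that $u\mapsto\Theta'(u)$ is continuous from $L^{F_1}(\R^N)$ into its dual. The crucial structural input is Lemma \ref{F1}: since $F_1,\tilde F_1\in(\Delta_2)$, the Orlicz space $L^{F_1}(\R^N)$ is reflexive and separable, norm convergence is equivalent to modular convergence, and bounded functions are dense; these are exactly the facts that make the Nemytskii-operator arguments below go through.

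First I would record the growth of the superposition operator $u\mapsto F_1'(u)$. From the estimate $1<l\le F_1'(s)s/F_1(s)\le p$ obtained in the proof of Lemma \ref{F1} one gets $|F_1'(s)|\le pF_1(s)/|s|$ for $s\neq0$, and, since equality holds in Young's inequality at the point realizing the Legendre transform, $\tilde F_1(F_1'(s))=F_1'(s)s-F_1(s)\le(p-1)F_1(s)$ for every $s\in\R$. Hence for $u\in L^{F_1}(\R^N)$ we have $\int_{\R^N}\tilde F_1(F_1'(u))\,dx\le(p-1)\int_{\R^N}F_1(u)\,dx<+\infty$, so $F_1'(u)\in L^{\tilde F_1}(\R^N)$, the map $N_{F_1'}\colon L^{F_1}(\R^N)\to L^{\tilde F_1}(\R^N)$, $u\mapsto F_1'(u)$, is well defined and, via \eqref{2} applied to both $F_1$ and $\tilde F_1$, sends bounded sets to bounded sets. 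In particular, by the Hölder-type inequality in Orlicz spaces, $v\mapsto\int_{\R^N}F_1'(u)v\,dx$ is a bounded linear functional on $L^{F_1}(\R^N)$ for each fixed $u$, so the candidate expression for $\Theta'(u)$ is meaningful.

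Next I would verify Gâteaux differentiability. Fix $u,v\in L^{F_1}(\R^N)$ and $|t|\le1$; then $\frac{\Theta(u+tv)-\Theta(u)}{t}=\int_{\R^N}\frac{F_1(u+tv)-F_1(u)}{t}\,dx$, and by the mean value theorem the integrand equals $F_1'(u+\theta_t tv)\,v$ for some measurable $\theta_t(x)\in(0,1)$. Because $F_1$ is even and, for $\delta\approx0^+$, convex by $(\textbf{P}_1)$, the derivative $F_1'$ is odd and nondecreasing on $[0,+\infty)$, so $|F_1'(u+\theta_t tv)|=F_1'(|u+\theta_t tv|)\le F_1'(|u|+|v|)$ whenever $|t|\le1$; thus the integrand is dominated by $F_1'(|u|+|v|)\,|v|$, which is integrable since $F_1'(|u|+|v|)\in L^{\tilde F_1}(\R^N)$ (as $|u|+|v|\in L^{F_1}(\R^N)$, using the previous paragraph) and $v\in L^{F_1}(\R^N)$. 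Letting $t\to0$ and applying the dominated convergence theorem gives $\Theta'(u)v=\int_{\R^N}F_1'(u)v\,dx$.

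Finally I would prove continuity of $u\mapsto\Theta'(u)$. From $\|\Theta'(u)-\Theta'(w)\|_{(L^{F_1})^*}=\sup_{\|v\|_{F_1}\le1}\bigl|\int_{\R^N}(F_1'(u)-F_1'(w))v\,dx\bigr|\le2\|F_1'(u)-F_1'(w)\|_{\tilde F_1}$ it suffices to show $N_{F_1'}$ is continuous. If $u_n\to u$ in $L^{F_1}(\R^N)$, then by a standard argument (possible since $F_1\in(\Delta_2)$) every subsequence has a further subsequence converging a.e.\ and dominated in $L^{F_1}(\R^N)$; continuity of $F_1'$ gives $F_1'(u_n)\to F_1'(u)$ and $\tilde F_1(F_1'(u_n)-F_1'(u))\to0$ a.e., and, combining the bound $\tilde F_1(F_1'(s))\le(p-1)F_1(s)$ with the $\Delta_2$-condition for $\tilde F_1$, these functions are uniformly integrable and dominated, so the generalized dominated convergence theorem yields $\int_{\R^N}\tilde F_1(F_1'(u_n)-F_1'(u))\,dx\to0$, i.e.\ $F_1'(u_n)\to F_1'(u)$ in $L^{\tilde F_1}(\R^N)$; the usual subsequence trick removes the passage to subsequences. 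Therefore $\Theta'$ is continuous and $\Theta\in\mathcal{C}^1(L^{F_1}(\R^N))$. I expect the only genuinely delicate point to be this last step — the continuity of the Nemytskii operator in the Orlicz framework — which truly relies on $F_1,\tilde F_1\in(\Delta_2)$ from Lemma \ref{F1} together with a careful Vitali-type convergence argument; everything else is routine once the growth control $\tilde F_1(F_1'(s))\le(p-1)F_1(s)$ is available.
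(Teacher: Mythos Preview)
Your proposal is correct and is precisely the ``standard'' argument the paper alludes to: in the paper the corollary is stated immediately after Lemma~\ref{F1} with only the remark that ``it is standard to prove the following result,'' and no proof is given. What you have written supplies exactly those omitted details --- the growth control $\tilde F_1(F_1'(s))\le(p-1)F_1(s)$ coming from the inequality $1<l\le F_1'(s)s/F_1(s)\le p$ of Lemma~\ref{F1}, Gâteaux differentiability via dominated convergence, and continuity of the Nemytskii map $u\mapsto F_1'(u)$ from $L^{F_1}$ to $L^{\tilde F_1}$ using the $\Delta_2$-conditions --- so there is nothing to compare.
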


In the sequel, in order to avoid the points
$u\in W^{1,p}(\mathbb{R}^{N})$ that satisfy $F_{1}(u)\not\in L^{1}(\mathbb{R}^{N})$, we should
consider the work space $X=W^{1,p}(\mathbb{R}^{N})\cap L^{F_{1}}(\mathbb{R}^{N})$ throughout the paper
equipped with the norm
\[
\|\cdot\|\triangleq\|\cdot\|_{W^{1,p}(\mathbb{R}^N)}+\|\cdot\|_{F_1},
\]
where $\|\cdot\|_{W^{1,p}(\mathbb{R}^N)}$ denotes the usual norm in $W^{1,p}(\mathbb{R}^{N})$.
Moreover, we denote the radially symmetric subsequence of $X$ by $X_r$,
namely $X_r=\{u\in X:u(x)=u(|x|)\}$ with the norm $\|\cdot\|$.

With the space $X$ and \eqref{decomposition} in hands, we can obtain the following Br\'{e}zis-Lieb type
lemma in the logarithmic setting.

\begin{lemma}\label{BrezisLieb}
Let $\left\{u_{n}\right\}$ be a bounded sequence in $X$ such that
$u_{n}\to u$ a.e. in $\mathbb{R}^{N}$ and $\{|u_{n}|^{p}\log|u_{n}|^{p}\}$ is a bounded sequence in $L^1(\R^N)$. Then,
up to a subsequence if necessary,
$$
\begin{aligned}\lim_{n\to\infty}\int_{\mathbb{R}^N}\left(|u_n|^p\log|u_n|^p-|u_n-u|^p\log|u_n-u|^p\right)dx
=\int_{\mathbb{R}^N}|u|^p\log|u|^pdx.\end{aligned}
$$
 \end{lemma}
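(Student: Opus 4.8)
The strategy is to exploit the decomposition \eqref{decomposition}, written as $|s|^p\log|s|^p=p\big(F_2(s)-F_1(s)\big)$, and to establish separate Br\'ezis--Lieb identities for the two functionals $u\mapsto\int_{\R^N}F_1(u)\,dx$ and $u\mapsto\int_{\R^N}F_2(u)\,dx$. First I would pass to a subsequence with $u_n\rightharpoonup u$ in $W^{1,p}(\R^N)$. Since $\{u_n\}$ is bounded in $X=W^{1,p}(\R^N)\cap L^{F_1}(\R^N)$, \eqref{2} gives $\int_{\R^N}F_1(u_n)\,dx\le\xi_1(\|u_n\|_{F_1})\le C$; as $F_1\ge0$ is continuous and $u_n\to u$ a.e., Fatou's lemma yields $\int_{\R^N}F_1(u)\,dx\le C<+\infty$, so $u\in X$. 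Since $F_1$ is a convex $N$-function satisfying the $(\Delta_2)$-condition (Lemma \ref{F1}), the pointwise inequality $F_1(a-b)\le C\big(F_1(a)+F_1(b)\big)$ holds, whence $\int_{\R^N}F_1(u_n-u)\,dx\le C$ uniformly in $n$. Finally, by the Sobolev embedding the sequences $\{u_n\}$, $\{u_n-u\}$ and $u$ are bounded in $L^{\tilde q}(\R^N)$ for a fixed $\tilde q\in(p,p^*)$.

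For the $F_1$ contribution the crucial ingredient is the $\ep$-Young type estimate: for every $\ep\in(0,1)$ there exists $C_\ep>0$ such that
\[
|F_1(a+b)-F_1(a)|\le\ep\,F_1(a)+C_\ep\,F_1(b),\qquad\forall\,a,b\in\R.
\]
I would prove it by splitting into the regimes $|b|\le\eta|a|$ and $|b|>\eta|a|$ for $\eta=\eta(\ep)$ small. In the first regime, convexity gives $F_1(a+b)-F_1(a)\le F_1'(a+b)\,b$, and then the bound $F_1'(s)s\le p\,F_1(s)$ from Lemma \ref{F1} together with the $(\Delta_2)$-condition gives $F_1(a+b)-F_1(a)\le p\eta\,F_1(a+b)\le pk\eta\,F_1(a)$; in the second regime, iterating $(\Delta_2)$ gives $|F_1(a+b)-F_1(a)|\le F_1(a+b)+F_1(a)\le C_\eta\,F_1(b)$, and the general sign case follows from the evenness and monotonicity of $F_1$. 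With this estimate, $u_n-u\to0$ a.e., $\int_{\R^N}F_1(u_n-u)\,dx\le C$ and $\int_{\R^N}F_1(u)\,dx<+\infty$, the generalized Br\'ezis--Lieb lemma (applicable to any continuous $j$ with $j(0)=0$ obeying such an $\ep$-splitting estimate) yields
\[
\int_{\R^N}F_1(u_n)\,dx-\int_{\R^N}F_1(u_n-u)\,dx\longrightarrow\int_{\R^N}F_1(u)\,dx.
\]

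For the $F_2$ contribution, property $(\textbf{P}_2)$ gives $|F_2'(s)|\le C_{\tilde q}|s|^{\tilde q-1}$ for some $\tilde q\in(p,p^*)$, hence $|F_2(s)|\le C|s|^{\tilde q}$ and, by the mean value theorem and Young's inequality, $|F_2(a+b)-F_2(a)|\le\ep|a|^{\tilde q}+C_\ep|b|^{\tilde q}$. Since $u_n-u\to0$ a.e. and $\{u_n-u\}$, $u$ are bounded in $L^{\tilde q}(\R^N)$, the same lemma gives
\[
\int_{\R^N}F_2(u_n)\,dx-\int_{\R^N}F_2(u_n-u)\,dx\longrightarrow\int_{\R^N}F_2(u)\,dx.
\]
Multiplying the two limits by $p$, subtracting, and invoking \eqref{decomposition} once more produces exactly the claimed identity.

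The step I expect to be the main obstacle is the $F_1$ contribution: because $F_1$ carries a logarithmic factor near the origin it is not of pure power type, so neither the $\ep$-Young pointwise estimate above nor the uniform bound $\int_{\R^N}F_1(u_n-u)\,dx\le C$ can be read off from elementary power inequalities; both have to be extracted from the convexity and the $(\Delta_2)$-structure of $F_1$ established in Lemma \ref{F1}. The $F_2$ piece, having subcritical polynomial growth, is then handled by the classical Br\'ezis--Lieb argument.
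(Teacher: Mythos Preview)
Your proposal is correct and follows exactly the paper's route: split $|s|^p\log|s|^p=p\bigl(F_2(s)-F_1(s)\bigr)$ via \eqref{decomposition} and prove a Br\'ezis--Lieb identity for each piece separately. The paper's own proof is extremely terse---it invokes \cite[Lemma~1.32]{Willem} for $F_2$ and then writes ``Similarly'' for $F_1$ without further comment---whereas you actually supply the justification for the $F_1$ step (the $\ep$-Young pointwise estimate extracted from convexity and the $(\Delta_2)$-condition of Lemma~\ref{F1}), which is precisely the ingredient needed to make ``Similarly'' rigorous.
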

\begin{proof}
Recalling \eqref{decomposition}, one has that
$$
 F_2(u_n)-F_1(u_n)=\frac1p|u_n|^p\log|u_n|^p.
$$
Since $\left\{u_{n}\right\}$ is a bounded sequence in $X$, by property-$(\textbf{P}_2)$,
it follows from \cite[Lemma 1.32]{Willem} that
$$
\lim_{n\to\infty} \int_{\mathbb{R}^N}[F_2(u_n)-F_2(u_n-u)]dx=\int_{\mathbb{R}^N} F_2(u )dx.
$$
Similarly, we easily conclude that
$$
\lim_{n\to\infty} \int_{\mathbb{R}^N}[F_1(u_n)-F_1(u_n-u)]dx=\int_{\mathbb{R}^N} F_1(u )dx.
$$
So, we can finish the proof of the lemma.
\end{proof}

Next, we shall introduce the Poho\u{z}aev identity
for a class of logarithmic $p$-Laplacian equations in $\R^N$ as follows.

\begin{theorem}\label{Pohozaev}
Let $2\leq p<N$. Suppose $u\in X$ to be a nontrivial weak solution of
\begin{equation}\label{Pohozaev1}
-  \Delta_p u =\lambda |u|^{p-2}u+|u|^{p-2}u\log|u|^p+\mu |u|^{q-2}u~\text{\emph{in}}~\R^N,
\end{equation}
where $\lambda,\mu\in\R$ are constants and $p<q\leq p^*$. Then
\begin{equation}\label{Pohozaev1a}
\int_{\mathbb{R}^{N}} |\nabla u |^{p} dx=\frac{ N }{p}\int_{\mathbb{R}^{N}} |u |^{p} dx
+ \mu N\left(\frac 1p-\frac{1}{q}\right)\int_{\mathbb{R}^{N}}|u |^{q} dx.
\end{equation}
Moreover, if in addition $u(x)\geq0$ for all $x\in\R^N$, then $u(x)>0$ for all $x\in\R^N$.
\end{theorem}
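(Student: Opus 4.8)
The plan is to obtain \eqref{Pohozaev1a} by eliminating the logarithmic integral between two identities that any nontrivial weak solution $u\in X$ of \eqref{Pohozaev1} must satisfy: a Nehari-type identity and a Poho\u{z}aev-type identity. First note that all the relevant integrals are finite: by \eqref{decomposition} one has $\tfrac1p|u|^p\log|u|^p=F_2(u)-F_1(u)$, where $F_2(u)\in L^1(\R^N)$ by property $(\textbf{P}_2)$ and the Sobolev embedding, while $F_1(u)\in L^1(\R^N)$ since $u\in L^{F_1}(\R^N)$; moreover $|u|^q\in L^1(\R^N)$ because $p<q\le p^*$. Hence $\varphi=u$ is an admissible test function in the weak formulation of \eqref{Pohozaev1}, and it gives
\begin{equation}\label{NehariAux}
\int_{\R^N}|\nabla u|^p\,dx=\lambda\int_{\R^N}|u|^p\,dx+\int_{\R^N}|u|^p\log|u|^p\,dx+\mu\int_{\R^N}|u|^q\,dx.
\end{equation}

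For the second identity I would use a dilation. Consider the functional
\[
E(v)=\frac1p\int_{\R^N}|\nabla v|^p\,dx-\frac{\lambda}{p}\int_{\R^N}|v|^p\,dx-\frac1p\int_{\R^N}|v|^p\log|v|^p\,dx+\frac1p\int_{\R^N}|v|^p\,dx-\frac{\mu}{q}\int_{\R^N}|v|^q\,dx,
\]
which, by \eqref{decomposition}, property $(\textbf{P}_2)$ and the Corollary following Lemma \ref{F1}, is well defined and of class $\mathcal{C}^1(X)$, and whose critical points on $X$ are precisely the weak solutions of \eqref{Pohozaev1}. Putting $u_t(x)=u(x/t)$ for $t>0$ and using the scalings $\int_{\R^N}|\nabla u_t|^p=t^{N-p}\int_{\R^N}|\nabla u|^p$, $\int_{\R^N}|u_t|^p=t^{N}\int_{\R^N}|u|^p$, $\int_{\R^N}|u_t|^q=t^{N}\int_{\R^N}|u|^q$ and $\int_{\R^N}|u_t|^p\log|u_t|^p=t^{N}\int_{\R^N}|u|^p\log|u|^p$, the function $t\mapsto E(u_t)$ becomes explicit and smooth on $(0,\infty)$. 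The crucial point, which amounts to $\frac{d}{dt}E(u_t)\big|_{t=1}=E'(u)[-x\cdot\nabla u]=0$, is the Poho\u{z}aev identity
\begin{equation}\label{PohoAux}
\frac{N-p}{p}\int_{\R^N}|\nabla u|^p\,dx-\frac{\lambda N}{p}\int_{\R^N}|u|^p\,dx-\frac{N}{p}\int_{\R^N}|u|^p\log|u|^p\,dx+\frac{N}{p}\int_{\R^N}|u|^p\,dx-\frac{\mu N}{q}\int_{\R^N}|u|^q\,dx=0.
\end{equation}
Using \eqref{NehariAux} to replace $\int_{\R^N}|u|^p\log|u|^p\,dx$ in \eqref{PohoAux} and simplifying, the terms involving $\lambda$ and the logarithm cancel and one is left with exactly \eqref{Pohozaev1a}.

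The step I expect to be the main obstacle is the rigorous proof of \eqref{PohoAux}. Since the right-hand side of \eqref{Pohozaev1} has $L^p$-subcritical growth (the map $s\mapsto|s|^{p-1}\big|\log|s|^p\big|$ grows at most polynomially and $q\le p^*$), a standard Moser-type iteration yields $u\in L^\infty_{\mathrm{loc}}(\R^N)$, and then the regularity theory for degenerate quasilinear equations (DiBenedetto, Tolksdorf) gives $u\in\mathcal{C}^{1,\alpha}_{\mathrm{loc}}(\R^N)$. With this at hand, \eqref{PohoAux} follows from the classical Poho\u{z}aev computation: multiply \eqref{Pohozaev1} by $\psi_R\,(x\cdot\nabla u)$, with $\psi_R(\cdot)=\psi(\cdot/R)$ a standard cutoff, integrate by parts, and let $R\to+\infty$; each lower-order term is rearranged through identities such as $|u|^{p-2}u\,(x\cdot\nabla u)=\tfrac1p\,x\cdot\nabla(|u|^p)$, and the remainders created by $\nabla\psi_R$ disappear in the limit because $|\nabla u|^p$, $|u|^p\log|u|^p$ and $|u|^q$ all lie in $L^1(\R^N)$, so their integrals over $\{R\le|x|\le2R\}$ tend to $0$. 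The genuinely delicate piece is the gradient term $\int|\nabla u|^{p-2}\nabla u\cdot\nabla(x\cdot\nabla u)$: because $\Delta_p$ is degenerate one cannot differentiate $u$ twice, so one first replaces $\Delta_p$ by the non-degenerate operator $\operatorname{div}\big((\eta+|\nabla u|^2)^{(p-2)/2}\nabla u\big)$, establishes the identity there, and passes to the limit $\eta\to0^+$, as is standard for Poho\u{z}aev identities for quasilinear operators. Once \eqref{PohoAux} is secured, the rest is the bookkeeping already described.

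For the positivity assertion, assume $u\ge0$ and $u\not\equiv0$ and rewrite \eqref{Pohozaev1}, for $u>0$, as $-\Delta_p u=u^{p-1}\big(\lambda+\log u^p+\mu u^{q-p}\big)$. Since $\lambda+\log u^p+\mu u^{q-p}\to-\infty$ as $u\to0^+$, near a would-be zero of $u$ the equation reads $\Delta_p u=h(u)$ with $h(0)=0$ and $h(s)=p\,s^{p-1}|\log s|\,\big(1+o(1)\big)\ge0$, $h$ nondecreasing; as $\int_0^s h(t)\,dt\asymp s^p|\log s|$ near $0^+$ and $p\ge2$, the divergence condition $\int_{0^+}\big(\int_0^s h(t)\,dt\big)^{-1/p}\,ds=+\infty$ appearing in V\'azquez's strong maximum principle for the $p$-Laplacian holds, so $u\equiv0$ or $u>0$ in $\R^N$. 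Since $u$ is nontrivial, $u(x)>0$ for all $x\in\R^N$, which completes the proof.
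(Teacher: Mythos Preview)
Your proposal is correct and follows essentially the same strategy as the paper: establish the Nehari identity \eqref{Pohozaev9} by testing with $u$, obtain the Poho\u{z}aev identity \eqref{Pohozaev8} after first proving $u\in L^\infty\cap\mathcal{C}^{1,\alpha}_{\mathrm{loc}}$ via Moser iteration and DiBenedetto's regularity, eliminate the logarithmic term between them, and finish the positivity via V\'azquez's strong maximum principle. The only technical difference is in the justification of the Poho\u{z}aev step: rather than the $\eta$-regularization of the operator you suggest, the paper invokes a ready-made Pucci--Serrin type variational identity for locally Lipschitz solutions (Lemma~A.1, taken from Degiovanni--Musesti--Squassina), which directly handles the degenerate term $\int|\nabla u|^{p-2}\nabla u\cdot\nabla(x\cdot\nabla u)$ without second derivatives and makes the cutoff passage $k\to\infty$ routine; this is somewhat cleaner than carrying out the approximation argument you outline.
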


\begin{proof}
The proof is divided into the following three steps.
\vskip3mm
\textbf{Step 1.} $u\in L^\infty(\mathbb{R}^N)\cap \mathcal{C}_{\mathrm{loc}}^{1,\tau}(\mathbb{R}^N)$ for some $\tau\in(0,1)$.
\vskip3mm
We start by assuming that $u\geq0$. For all $L>1$, define $u_L=\min\{u,L\}$.
Taking $\psi=u_L^{kp+1}\in X$ with $k\geq0$ as a test function in \eqref{Pohozaev1}, we obtain
\begin{equation}\label{Pohozaev2}
\int_{\R^N} |\nabla u |^{p-2}\nabla u \nabla (u_L^{kN+1}) dx
 = \int_{\R^N}[F_2'(u)-F'_1(u)+( \lambda-1)|u|^{p-2}u +\mu|u|^{q-2}u]u_L^{kp+1}dx.
\end{equation}
It is easy to observe that
 \begin{equation}\label{Pohozaev3}
 \left\{
   \begin{array}{ll}
  \displaystyle  \int_{\R^N} |\nabla u |^{p-2}\nabla u \nabla (u_L^{kp+1}) dx=\frac{kp+1}{(k+1)^p}\int_{\R^N} |\nabla (u_L)^{k+1} |^pdx,\\
 \displaystyle \int_{\R^N} |u |^{p-2}u u_L^{kp+1}dx\geq \int_{\R^N} | (u_L )^{k+1}|^pdx.
   \end{array}
 \right.
 \end{equation}
Adopting property-$(\textbf{P}_1)$ and property-$(\textbf{P}_2)$ with $\tilde{q}=q$, there holds
\begin{equation}\label{Pohozaev4}
F_2'(u)u-F'_1(u)u+ \lambda |u|^{p } +\mu|u|^{q } \leq(C_q+|\lambda|+|\mu|)|u|^{q }
\triangleq C_{q,\lambda,\mu}|u|^{q }~\text{if}~|u|\geq1.
 \end{equation}
Without loss of generality, we shall suppose that $|u|\geq1$.
Combining \eqref{Pohozaev2}, \eqref{Pohozaev3} and \eqref{Pohozaev4}, we have
\begin{align}\label{Pohozaev5}
\nonumber\bigg(\int_{\R^N}|u_L|^{(k+1)p^*}\bigg)^{\frac p{p^*}} &\leq C_{p^*}\|(u_L)^{k+1}\|^p_{W^{1,p}(\R^N)}
\leq  C_{p^*}C_{q,\lambda,\mu}(k+1)^p \int_{\R^N} |u|^{q }u ^{kp }dx \\
  & \leq C_{p^*}C_{q,\lambda,\mu}(k+1)^p\left( \int_{\R^N} |u|^{q } dx\right)^{\frac { q-p }q}
   \left( \int_{\R^N} |u| ^{(k+1)q }dx\right)^{\frac pq}
\end{align}
Letting $L\to+\infty$ in \eqref{Pohozaev5}, we arrive at
\[
\bigg(\int_{\R^N}|u_L|^{(k+1)p^*}\bigg)^{\frac p{p^*}} \leq
C_{p^*}C_{q,\lambda,\mu}(k+1)^p\mathbb{T} (u)\left( \int_{\R^N} |u| ^{(k+1)q }dx\right)^{\frac pq}
\]
which is equivalent to
\begin{equation}\label{Pohozaev6}
 \bigg(\int_{\R^N}|u|^{(k+1) p^*}\bigg)^{\frac 1{(k+1) p^*}}\leq
 C_*^{\frac1{k+1}}(k+1)^{\frac1{k+1}} \bigg(\int_{\R^N}|u|^{(k+1)q} dx\bigg)^{\frac 1{(k+1)q}},
\end{equation}
where the constant $C_*=C_{p^*}^pC_{q,\lambda,\mu}^p [\mathbb{T} (u)]^p>0$
is independent of $k$. Let $k=0$ in \eqref{Pohozaev6}, it becomes
\[
 \bigg(\int_{\R^N}|u|^{ q\varpi }\bigg)^{\frac 1{q\varpi}}\leq
 C_* \bigg(\int_{\R^N}|u|^{q} dx\bigg)^{\frac 1{q}},
\]
where $\varpi=p^*/q\geq1$.
For $k+1=\varpi^{m}$ with $m\in \mathbb{N}^+$ in \eqref{Pohozaev6}, it holds that
\[
\bigg(\int_{\R^N}|u |^{\varpi^{m+1}\sigma }dx\bigg)^{\frac{1}{\varpi^{m+1}\sigma}}
\leq C_*^{\frac1{\varpi^m}}\varpi^{\frac m{\varpi^m}}\bigg(\int_{\R^N}|u |^{\varpi^mq}dx\bigg)^{\frac{1}{\varpi^mq}}.
\]
From it, proceeding this iteration procedure $m$ times and multiplying these $m+1$ formulas,
\[
\bigg(\int_{\R^N}|u|^{\varpi^{m+1}\sigma }dx\bigg)^{\frac{1}{\varpi^{m+1}\sigma}}
\leq C_*^{\sum_{j=0}^m\frac1{\varpi^j}}\varpi^{\sum_{j=1}^m\frac j{\varpi^j}}\bigg(\int_{\R^N}|u|^{q}dx\bigg)^{\frac{1}{q}}.
\]
Since $\sum_{j=0}^\infty\frac1{\varpi^j}=\frac\varpi{\varpi-1}$ and $\sum_{j=1}^\infty\frac j{\varpi^j}=\frac{\varpi}{(\varpi-1)^2}$,
then we could take the limit as $m\to+\infty$ to conclude that $u\in L^\infty(\R^N)$.
When $u$ changes sign, then it is enough to argue as before by contemplating once the positive part $u^+ \triangleq
\max\{u, 0\}$ and once the negative part $u^- \triangleq\max\{-u, 0\}$ in place of $u$ in the definition of $u_L$. As a
result, we shall finish the verification of $u\in L^\infty(\R^N)$ for all nontrivial solution $u$.
In addition, we could follow \cite{DiBenedetto} to conclude that $u\in \mathcal{C}^{1,\tau}_{\text{loc}}(\R^N)$
for some $\tau\in(0,1)$.
\vskip3mm
\textbf{Step 2.} The nontrivial solution $u\in X$ satisfies \eqref{Pohozaev1a}.
 \vskip3mm
We recall \cite[Theorem 2]{DMS} which is presented Lemma A.1 in the Appendix and
take $\mathcal{L}(x,s,\xi)=\frac1p|\xi|^p$ which
is strictly convex in the variable $\xi\in\R^N$. Let $\varphi\in \mathcal{C}_c^1(\R^N)$
be such that $0\leq\varphi\leq1$, $\varphi(x)=1$ for all $|x|\leq1$, and $\varphi(x)=0$ for all $|x|\geq2$. Define
\[
h(x)=\varphi\bigg(\frac xk\bigg)x\in \mathcal{C}^1(\R^N,\R^N),~\text{for all}~k\in \mathbb{N}^+.
\]
Note that if $h_j(x) =\varphi\big(\frac xk\big)x_j$ for $j=1,2,\cdots$, then
\[
\left\{
  \begin{array}{ll}
\displaystyle  D_ih_j(x)=D_i\varphi\bigg(\frac xk\bigg)\frac{x_j}{k}+\varphi\bigg(\frac xk\bigg)\delta_{ij},
~\text{for all}~x\in\R^N,~j=1,2,\cdots, \\
\displaystyle  \text{div}h(x)=D \varphi\bigg(\frac xk\bigg)\frac{x }{k}+N\varphi\bigg(\frac xk\bigg),
~\text{for all}~x\in\R^N,
  \end{array}
\right.
\]
where $\delta_{ij}$ denotes the Kronecker delta symbol. One also observes that
\begin{equation}\label{Pohozaev7}
\bigg|D_i\varphi\bigg(\frac xk\bigg)\frac{x_j}{k}\bigg|\leq C,
~\text{for all}~x\in\R^N,~i,j=1,2,\cdots.
\end{equation}
Denoting $f(s)=\lambda |s|^{p-2}s+|u|^{p-2}u\log|s|^p+\mu |s|^{q-2}s$ for all $s\in\R$, by means of
\eqref{Appendix1} below, it holds that
\[
\begin{gathered}
\sum_{i,j=1}^N\int_{\R^N}D_i\varphi\bigg(\frac xk\bigg)\frac{x_j}{k} D_{\xi_i}\mathcal{L}(x,u,\nabla u)D_judx
+\int_{\R^N}\varphi\bigg(\frac xk\bigg) D_{\xi}\mathcal{L}(x,u,\nabla u)\cdot \nabla udx\hfill\\
\ \ \ \  \ \ \ \  \ \ \ \ -\int_{\R^N}\bigg[D \varphi\bigg(\frac xk\bigg)\frac{x }{k}\mathcal{L}(x,u,\nabla u)
+N\varphi\bigg(\frac xk\bigg)\mathcal{L}(x,u,\nabla u)  \bigg]dx \hfill\\
\ \ \ \  =\int_{\R^N}\bigg[\varphi\bigg(\frac xk\bigg)x\cdot \nabla u\bigg]f(u)dx.
\hfill\\
\end{gathered}
\]
Thanks to \eqref{Pohozaev7}, $\varphi\big(\frac xk\big)\to1$ and $\nabla\varphi\big(\frac xk\big)\cdot \frac xk\to0$
as $k\to+\infty$. Thus, we obtain
\[
\begin{gathered}
\sum_{i,j=1}^N\int_{\R^N}D_i\varphi\bigg(\frac xk\bigg)\frac{x_j}{k} D_{\xi_i}\mathcal{L}(x,u,\nabla u)D_judx
+\int_{\R^N}\varphi\bigg(\frac xk\bigg) D_{\xi}\mathcal{L}(x,u,\nabla u)\cdot \nabla udx\hfill\\
\ \ \ \  \ \ \ \  \ \ \ \ -\int_{\R^N}\bigg[D \varphi\bigg(\frac xk\bigg)\frac{x }{k}\mathcal{L}(x,u,\nabla u)
+N\varphi\bigg(\frac xk\bigg)\mathcal{L}(x,u,\nabla u)  \bigg]dx \hfill\\
\ \ \ \  \to\int_{\R^N} |\nabla u|^pdx-N\int_{\R^N} \frac1p|\nabla u|^pdx
=-\frac{N-p}{p}\int_{\R^N} |\nabla u|^p dx
\hfill\\
\end{gathered}
\]
as $k\to+\infty$. On the other hand, since $F(u)\in L^1(\R^N)$ for all $x\in X$ by \eqref{decomposition},
 we shall exploit an integration by parts and the Lebesgue's Dominated Convergence theorem to reach
\begin{align*}
\int_{\R^N}\bigg[\varphi\bigg(\frac xk\bigg)x\cdot \nabla u\bigg]f(u)dx&=
-N\int_{\R^N}F(u) \varphi\bigg(\frac xk\bigg)dx-\int_{\R^N}\bigg[\nabla\varphi\bigg(\frac xk\bigg)\cdot \frac xk \bigg]F(u)dx \\
  & \to -N\int_{\R^N}F(u) dx
\end{align*}
 as $k\to+\infty$. So, we can conclude the equality
\begin{equation}\label{Pohozaev8}
 \frac{N-p}{p}\int_{\R^N} |\nabla u|^p dx
 =N\int_{\R^N}\left(\frac{\lambda-1}{p} |u|^{p } +\frac1p|u|^{p} \log|u|^p+\frac{\mu }{q}|s|^{q }\right)dx.
\end{equation}
 Multiplying the nontrivial solution $u\in X$ on both sides of Eq. \eqref{Pohozaev1}, one has that
\begin{equation}\label{Pohozaev9}
 \int_{\R^N} |\nabla u|^p dx=
 \int_{\R^N}(\lambda |u|^{p }+|u|^{p } \log|u|^p+\mu |u|^{q })dx.
\end{equation}
 By multiplying $\frac Np$ in \eqref{Pohozaev9} and then minus \eqref{Pohozaev8}, we get the desired identity
 \eqref{Pohozaev1a}.

\vskip3mm
 \textbf{Step 3.} If the nontrivial solution $u(x)\geq0$ for all $x\in\R^N$, then $u(x)>0$ for all $x\in\R^N$.
 \vskip3mm

Choosing a sufficiently small $\epsilon>0$, we have
$$
\Delta_pu=-\lambda u^{p-1} -u^{p-1} \log u^p-\mu u^{q-1} \leq \xi(u)~\text{in}~
\{x\in\R^N:0<u(x)<\epsilon\},
$$
where $\xi(0)\triangleq \lim\limits_{s\to0^+}\xi(s)=0$ and $\xi:(0,+\infty)\to\R$ is defined by
$$
\xi(s)=
\left\{
  \begin{array}{ll}
   - \lambda s^{p-1} -s^{p-1} \log s^p, & \text{if}~\mu>0, \\
   -( \lambda+\mu) s^{p-1} +s^{p-1} \log s^p, & \text{if}~\mu\leq0.
  \end{array}
\right.
$$
Clearly, $\xi$ is continuous and nondecreasing when $s>0$ is small enough.
It is simple to calculate that
$\xi(\sqrt[p]{e^{-\lambda}})=0$ if $\mu>0$, and
  $\xi(\sqrt[p]{e^{-(\lambda+\mu)}})=0$ if $\mu\leq0$.
Since $u(x)\geq0$ for all $x\in\R^N$, then we apply the Step 1
and \cite[Theorem 5]{Vazquez} to finish the proof.
\end{proof}

\section{The semiclassical problem}\label{semiclassical}

In this section, we shall contemplate the existence and concentration behavior
of positive normalized solutions for a class of $p$-Laplacian
equations with logarithmic nonlinearities.
Nevertheless, first of all, let us consider
 the existence of positive solutions to the problem
\begin{equation}\label{limitproblem}
 \left.\left\{\begin{array}{l}
  \displaystyle-\Delta_pu+\mu|u|^{p-2}u=\lambda|u|^{p-2}u+|u|^{p-2}u\log|u|^p,~\text{in}~\mathbb{R}^N,\\
 \displaystyle\int_{\mathbb{R}^N}|u|^pdx=a^p,\end{array}\right.\right.
\end{equation}
 where $\Delta_pu =\text{div} (|\nabla u|^{p-2}\nabla u)$ denotes the usual $p$-Laplacian operator with $2\leq p < N$, $\mu\in [-1,+\infty)$ is a
fixed constant and $\lambda\in \R$ is known as the Lagrange multiplier.

In general, to solve Problem \eqref{limitproblem},
we look for critical points of the following variational functional
$$
I_\mu (u) =
\frac1p\int_{\mathbb{R}^N}\left[|\nabla u|^{p } +(\mu+1)|u|^{p}\right]dx
+\int_{\mathbb{R}^N}F_1 (u)vdx-\int_{\mathbb{R}^N}F_2 (u) dx
$$
restricted to the sphere $S(a)$ defined by
$$
S(a)=\left\{u\in X:\int_{\R^N}|u|^pdx=a^p\right\}.
$$
Recalling Lemma \ref{F1}, it follows that $(X,\|\cdot\|)$ is a reflexive and separable Banach space. Additionally,
note that the imbedding $X\hookrightarrow W^{1,p}(\mathbb{R}^N)$ and $X\hookrightarrow L^{F_1}(\mathbb{R}^N)$ are continuous.
As a consequence, we are derived from Section \ref{Preliminaryresults} that $I_\mu\in \mathcal{C}^1(X,\R)$ with
$$
\begin{aligned}I_\mu'(u)v=
\int_{\mathbb{R}^N}\left[|\nabla u|^{p-2}\nabla u\nabla v+(\mu+1)|u|^{p-2}v\right]dx
+\int_{\mathbb{R}^N}F_1'(u)vdx-\int_{\mathbb{R}^N}F_2'(u)vdx,\forall v\in X.\end{aligned}
$$

 Next, we will prove the following result for Problem \eqref{limitproblem}.

 \begin{theorem}\label{limittheorem}
 Let $2\leq p<N$. Then, there is a constant $\tilde{a}=\tilde{a}(\mu)>0$ such that
 Problem \eqref{limitproblem} has a couple solution $(u,\lambda)\in X\times\R$
 for all $a> \tilde{a}$, where $u(x)>0$ for all $x\in \R^N$ and $\lambda<0$.
 \end{theorem}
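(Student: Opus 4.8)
The plan is to obtain the solution of Problem \eqref{limitproblem} as a global minimizer of $I_\mu$ on $S(a)$ for $a>\tilde a$, and then identify the Lagrange multiplier. First I would set
\[
\gamma(a)=\inf_{u\in S(a)}I_\mu(u),
\]
and the first task is to show $\gamma(a)>-\infty$ and, crucially, $\gamma(a)<0$ for $a$ large. The lower bound comes from the decomposition \eqref{decomposition}: writing $\frac1p|u|^p\log|u|^p=F_2(u)-F_1(u)$, the functional becomes
\[
I_\mu(u)=\frac1p\int_{\R^N}\big(|\nabla u|^p+(\mu+1)|u|^p\big)dx+\int_{\R^N}F_1(u)dx-\int_{\R^N}F_2(u)dx,
\]
with $\mu+1\ge0$, $F_1\ge0$ by $(\textbf P_1)$, and $F_2(u)$ controlled by property-$(\textbf P_2)$ together with the Gagliardo–Nirenberg inequality \eqref{GN} (choosing the exponent $\tilde q\in(p,\bar p)$ so the $\|\nabla u\|_p$ power is subcritical, hence absorbable); this yields $I_\mu(u)\ge -C(a)$ on $S(a)$, so $\gamma(a)$ is finite. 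For the strict negativity, I would test with a rescaled bump: take $u\in S(a)$ and consider $u_t(x)=t^{N/p}u(tx)\in S(a)$; a direct computation gives
\[
I_\mu(u_t)=\frac{t^p}{p}\int_{\R^N}|\nabla u|^p dx+\frac{(\mu+1)a^p-a^p N\log t}{p}-\frac1p\int_{\R^N}|u|^p\log|u|^p dx,
\]
so that letting $t\to0^+$ makes $I_\mu(u_t)\to -\infty$; in particular $\gamma(a)<0$ for every $a>0$ once we fix such a competitor. (The threshold $\tilde a=\tilde a(\mu)$ will enter to guarantee coercivity/compactness of minimizing sequences, not mere negativity.)

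Next I would take a minimizing sequence $\{u_n\}\subset S(a)$ for $\gamma(a)$. From the lower-bound estimate above one gets that $\|\nabla u_n\|_p$ and $\int F_1(u_n)$ are bounded; by \eqref{2} applied to $\Phi=F_1$ (valid since $F_1,\tilde F_1\in(\Delta_2)$ by Lemma \ref{F1}) this bounds $\|u_n\|_{F_1}$, and of course $\|u_n\|_p=a$, so $\{u_n\}$ is bounded in $X$. One may also symmetrize: replacing $u_n$ by its Schwarz symmetric rearrangement $u_n^*$ does not increase $\|\nabla u_n\|_p$, preserves $\|u_n\|_p$, and preserves the nonlinear integrals (they are of the form $\int G(|u_n|)$), so we may assume $u_n=u_n^*$ is radially symmetric, nonnegative and nonincreasing in $r=|x|$. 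Up to a subsequence, $u_n\rightharpoonup u$ in $X$ and $u_n\to u$ a.e.

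The core difficulty, exactly as flagged in the Remark after Theorem \ref{maintheorem3}, is compactness: the embedding $X_r\hookrightarrow L^p(\R^N)$ is \emph{not} compact, so one cannot conclude $\|u\|_p=a$ for free, and the mass could split or escape. To rule this out I would argue as follows. The radial, nonincreasing structure gives the decay estimate $u_n(x)\le C a\, |x|^{-N/p}$ for $|x|\ge1$, which upgrades weak $L^p$-convergence to strong $L^s$-convergence on all of $\R^N$ for every $s\in(p,p^*)$ (the "tails" are uniformly small because the $L^s$-norm of $Ca|x|^{-N/p}$ on $|x|>R$ tends to $0$). Hence $\int_{\R^N}|u_n|^q\,dx\to\int_{\R^N}|u|^q\,dx$ for such $q$, and more to the point $\int F_2(u_n)\to\int F_2(u)$ by the growth bound $(\textbf P_2)$. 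It remains to control the $L^p$-mass and the term $\int F_1(u_n)$. Here I would exploit that $\gamma(\cdot)$ satisfies a strict subadditivity inequality of the form $\gamma(a)<\gamma(b)+\gamma_\infty(a-b)$ — or, more robustly in the logarithmic setting, that the map $a\mapsto\gamma(a)$ is strictly decreasing for $a>\tilde a$ because of the $-a^pN\log t$ term, which penalizes dichotomy; combining this with the Brezis–Lieb splitting of Lemma \ref{BrezisLieb} for the logarithmic part and the usual Brezis–Lieb lemma for $\|\cdot\|_p^p$ and $\int F_1$, the only consistent possibility is $u_n\to u$ strongly in $L^p$, whence $u\in S(a)$ and, by weak lower semicontinuity of $u\mapsto\int(|\nabla u|^p+(\mu+1)|u|^p)+\int F_1(u)$ together with the strong convergence of $\int F_2(u_n)$, $I_\mu(u)\le\gamma(a)$, so $u$ is a minimizer. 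I expect this compactness step — making the no-vanishing/no-dichotomy alternative rigorous without a compact embedding, relying on the radial decay plus the logarithmic strict monotonicity of $\gamma$ — to be the main obstacle; it is precisely the point the authors single out as "some new analytic tricks to recover the desired compactness."

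Finally, since $u$ minimizes the $C^1$ functional $I_\mu$ on the $C^1$ constraint $S(a)$, the Lagrange multiplier rule yields $\lambda\in\R$ with $I_\mu'(u)=\lambda\,\|u\|_p^{p-2}$-type relation, i.e. $(u,\lambda)$ is a weak solution of \eqref{limitproblem}. Testing this equation with $u$ and comparing with the Pohožaev identity \eqref{Pohozaev1a} (Theorem \ref{Pohozaev}, with $\mu_{\text{there}}=0$), one eliminates $\|\nabla u\|_p^p$ and solves for $\lambda$; the explicit expression together with $\gamma(a)<0$ and $a>\tilde a$ forces $\lambda<0$. Positivity of $u$: we already arranged $u\ge0$ by symmetrization, $u\not\equiv0$ since $u\in S(a)$, and then $u>0$ everywhere follows from the strong maximum principle argument in Step 3 of the proof of Theorem \ref{Pohozaev} (using regularity $u\in L^\infty_{\text{loc}}\cap C^{1,\tau}_{\text{loc}}$ and \cite[Theorem 5]{Vazquez}). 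This completes the proof.
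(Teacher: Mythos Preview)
Your argument for $\gamma(a)<0$ is incorrect, and with it your identification of where the threshold $\tilde a$ enters. With the $L^p$-preserving dilation $u_t(x)=t^{N/p}u(tx)$ your own formula reads
\[
I_\mu(u_t)=\frac{t^p}{p}\|\nabla u\|_p^p+\frac{(\mu+1)a^p}{p}-\frac1p\int_{\R^N}|u|^p\log|u|^p\,dx-\frac{Na^p}{p}\log t,
\]
and as $t\to0^+$ the term $-\frac{Na^p}{p}\log t$ tends to $+\infty$, not $-\infty$; as $t\to+\infty$ the gradient term $t^p$ dominates $\log t$, so again $I_\mu(u_t)\to+\infty$. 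Hence the dilation scaling never drives the energy to $-\infty$ and does not show $\gamma(a)<0$. The paper instead uses the \emph{amplitude} scaling $t\psi$ for a fixed $\psi\in X\setminus\{0\}$: then
\[
I_\mu(t\psi)=\frac{t^p}{p}\int(|\nabla\psi|^p+(\mu+1)|\psi|^p)-\frac{t^p}{p}\int|\psi|^p\log|\psi|^p-t^p\log t\int|\psi|^p\to-\infty
\]
as $t\to+\infty$, but $|t\psi|_p=t|\psi|_p$, so this only places a negative-energy competitor in $S(a)$ for $a\ge\tilde a\triangleq\tilde t\,|\psi|_p$. Thus the threshold $\tilde a$ is needed precisely to secure $\mathcal I_{\mu,a}<0$; coercivity and boundedness from below hold for \emph{every} $a>0$ (this is Lemma~\ref{realnumber}), contrary to your parenthetical remark. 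Without $\mathcal I_{\mu,a}<0$ you also lose your route to $\lambda<0$.

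On compactness your symmetrization route is viable but diverges from the paper. The paper does not symmetrize; it proves the strict homogeneity-type monotonicity $\frac{a_1^p}{a_2^p}\mathcal I_{\mu,a_2}<\mathcal I_{\mu,a_1}$ (Lemma~\ref{monotone}) and then runs a translation-based concentration--compactness alternative (Theorem~\ref{compactness}): if the weak limit $u\neq0$ has mass $b\in(0,a)$, Br\'ezis--Lieb splitting and the monotonicity yield a strict inequality $\mathcal I_{\mu,a}>\mathcal I_{\mu,a}$; if $u=0$, the fact that $\int F_2(u_n)\ge C>0$ (forced by $\mathcal I_{\mu,a}<0$) and Lions' lemma produce translations after which one repeats the first case. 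Your radial reduction would let you skip the translation step, but the dichotomy exclusion you sketch is exactly the same monotonicity/Br\'ezis--Lieb mechanism the paper uses --- you should state and prove that inequality rather than gesture at ``strict subadditivity''. Finally, your use of Poho\v zaev to get $\lambda<0$ is unnecessary: testing the Euler equation with $u$ and substituting into $I_\mu(u)$ gives the exact relation $\mathcal I_{\mu,a}=\frac{(\lambda+1)a^p}{p}$, whence $\lambda<-1<0$ directly from $\mathcal I_{\mu,a}<0$.
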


 The proof of the above theorem will be divided into several lemmas.

 \begin{lemma}\label{realnumber}  Let $2\leq p<N$,
the functional $I_{\mu}$ is coercive and bounded from below on $S(a)$.
\end{lemma}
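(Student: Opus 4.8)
The statement to prove is that $I_\mu$ is coercive and bounded from below on $S(a)$. The natural strategy is to exploit the decomposition $\frac1p|u|^p\log|u|^p = F_2(u) - F_1(u)$ from \eqref{decomposition}, which lets us rewrite
\[
I_\mu(u) = \frac1p\int_{\R^N}\left(|\nabla u|^p + (\mu+1)|u|^p\right)dx + \int_{\R^N}F_1(u)\,dx - \int_{\R^N}F_2(u)\,dx.
\]
Since $\mu \geq -1$, the term $(\mu+1)|u|^p \geq 0$, so it can simply be discarded from below. The term $\int F_1(u)\,dx$ is nonnegative by property-$(\textbf{P}_1)$, so it too only helps. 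Hence it suffices to control $-\int F_2(u)\,dx$ from below by something like $-\frac{1}{2p}\int|\nabla u|^p\,dx - C(a)$, which would give $I_\mu(u) \geq \frac{1}{2p}\int|\nabla u|^p\,dx - C(a)$ and, since on $S(a)$ the $L^p$-norm is fixed, this yields both coercivity (in the $W^{1,p}$-part of the norm) and boundedness from below.

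First I would estimate $\int F_2(u)\,dx$ using property-$(\textbf{P}_2)$: pick some exponent $\tilde q$ with $p < \tilde q < p^* = \frac{Np}{N-p}$ and also, if convenient, $\tilde q < \bar p = p + \frac{p^2}{N}$ so that the Gagliardo–Nirenberg exponent $\beta_{\tilde q} = N(\frac1p - \frac1{\tilde q})$ satisfies $\beta_{\tilde q} p < p$, i.e. $\beta_{\tilde q} < 1$. Then $|F_2(s)| \leq C_{\tilde q}|s|^{\tilde q}$ up to constants (integrating the bound on $F_2'$), so
\[
\int_{\R^N}F_2(u)\,dx \leq C\|u\|_{L^{\tilde q}}^{\tilde q} \leq C\,\mathbb{C}_{N,p,\tilde q}^{\tilde q}\,\|\nabla u\|_{L^p}^{\beta_{\tilde q}\tilde q}\,\|u\|_{L^p}^{(1-\beta_{\tilde q})\tilde q} = C(a)\,\|\nabla u\|_{L^p}^{\beta_{\tilde q}\tilde q}
\]
on $S(a)$, where $C(a)$ absorbs the fixed factor $a^{(1-\beta_{\tilde q})\tilde q}$. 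Since $\beta_{\tilde q}\tilde q < p$, Young's inequality gives $C(a)\,t^{\beta_{\tilde q}\tilde q} \leq \frac{1}{2p}t^p + C'(a)$ for all $t \geq 0$. Applying this with $t = \|\nabla u\|_{L^p}$ yields
\[
I_\mu(u) \geq \frac1p\|\nabla u\|_{L^p}^p - \frac{1}{2p}\|\nabla u\|_{L^p}^p - C'(a) = \frac{1}{2p}\|\nabla u\|_{L^p}^p - C'(a),
\]
which is bounded below and coercive in $\|\nabla u\|_{L^p}$; combined with $\|u\|_{L^p}^p = a^p$ fixed this controls the full $W^{1,p}$-norm.

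One subtlety is that the norm on $X$ is $\|\cdot\| = \|\cdot\|_{W^{1,p}(\R^N)} + \|\cdot\|_{F_1}$, so genuine coercivity on $(X, \|\cdot\|)$ would require also controlling $\|u\|_{F_1}$. Here property-$(\textbf{P}_1)$ and the $\Delta_2$-regularity from Lemma \ref{F1} come in: on $S(a)$, the term $\int F_1(u)\,dx$ appears with a positive sign in $I_\mu$, and by \eqref{2} applied to the $N$-function $F_1$ one has $\xi_0(\|u\|_{F_1}) \leq \int_{\R^N}F_1(u)\,dx$, where $\xi_0(t) = \min\{t^l, t^p\}$. So if $I_\mu(u) \leq M$ along a sequence, then from the inequality $I_\mu(u) \geq \frac{1}{2p}\|\nabla u\|_{L^p}^p + \int F_1(u)\,dx - C'(a)$ we get both $\|\nabla u\|_{L^p}$ bounded and $\int F_1(u)\,dx \leq M + C'(a)$, hence $\|u\|_{F_1}$ bounded. \textbf{The main obstacle} is precisely making sure the good terms $\int F_1(u)\,dx$ and $(\mu+1)\int|u|^p\,dx$ are correctly signed and that the bad term $-\int F_2(u)\,dx$ is dominated by a small fraction of $\int|\nabla u|^p\,dx$; this is where the choice of $\tilde q$ strictly below the $L^p$-critical exponent $\bar p$ (so that $\beta_{\tilde q}\tilde q < p$) is essential, and it is the only place where the constraint $\int|u|^p\,dx = a^p$ is really used — to freeze the $\|u\|_{L^p}$ factor in the Gagliardo–Nirenberg estimate and thereby convert a superlinear-looking term into a strictly sublinear one in $\|\nabla u\|_{L^p}$.
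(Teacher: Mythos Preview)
Your proof is correct and follows essentially the same approach as the paper: both use the decomposition via $F_1,F_2$, bound $\int F_2(u)\,dx$ by property-$(\textbf{P}_2)$ with an exponent $\tilde q\in(p,\bar p)$, apply the Gagliardo--Nirenberg inequality on $S(a)$ to get a term of order $\|\nabla u\|_{L^p}^{\beta_{\tilde q}\tilde q}$ with $\beta_{\tilde q}\tilde q<p$, and invoke \eqref{2} to control the Orlicz part of the norm. Your write-up is in fact more explicit than the paper's (which simply says ``these facts reveal the proof''), in particular in spelling out the Young-inequality absorption and the coercivity in the full $X$-norm; one small slip is that you first write ``$\beta_{\tilde q}p<p$, i.e.\ $\beta_{\tilde q}<1$'' when the relevant inequality (which you then correctly use) is $\beta_{\tilde q}\tilde q<p$.
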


\begin{proof}
In view of the property-$(\textbf{P}_{2})$ in Section \ref{Preliminaryresults},
for~every~fixed $\tilde{q}\in\left(p,\frac{p^{2}}{N}\right)$, there~exists~a~constant $C_{\tilde{q}}>0$ such~that
$$
|F_2'(s)|\leq C_{\tilde{q}}|s|^{\tilde{q}-1},~\forall s\in\mathbb{R}.
$$
Moreover, by the Gagliardo-Nirenberg inequality \eqref{GN},
$$
\begin{aligned}
I_{\mu}(u)& =\frac{1}{p}\int_{\mathbb{R}^{N}}\left[|\nabla u |^{p}+(\mu+1)|u |^{p}\right]dx
-\frac{1}{p}\int_{\mathbb{R}^{N}}|u |^{p}\log|u |^{p}dx  \\
&\geq \frac{1}{p}\int_{\mathbb{R}^{N}} |\nabla u |^{p}dx+
\int_{\mathbb{R}^N}F_1 (u) dx-C_q\mathbb{C}_{N,p,\tilde{q}}a^{\tilde{q}(1-\beta_{\tilde{q}})}\left(
\int_{\mathbb{R}^{N}} |\nabla u |^{p}dx\right)^{\frac{\tilde{q}\beta_{\tilde{q}}}{p}}.
\end{aligned}
$$
Since $\tilde{q}\in\left(p,\frac{p^{2}}{N}\right)$, then $\tilde{q}\beta_{\tilde{q}}<p$ by \eqref{GN2}.
Moreover, adopting \eqref{2}, we see that
$\int_{\mathbb{R}^N}F_1 (u) dx\to+\infty$ as $\|u\|_{F_1}\to\infty$.
These facts reveal the proof of this lemma.
\end{proof}

 As a direct consequence of Lemma \ref{realnumber}, the real number
 \[
 \mathcal{I}_{\mu,a}=\inf_{u\in S(a)}I_{\mu}(u)
 \]
 is well-defined. Then, we are going to establish some properties of
 $\mathcal{I}_{\mu,a}$ with respect to the parameter $\mu\in[-1,+\infty)$.

  \begin{lemma}\label{realnumber2}
 Let $2\leq p<N$, then there exists a constant $\tilde{a}=\tilde{a}(\mu)>0$ such that
$\mathcal{I}_{\mu,a}<0$ for all $a>\tilde{a}$ and $\mu\in[-1,+\infty)$.
 \end{lemma}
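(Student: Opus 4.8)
The plan is to prove the strict negativity of the infimum by exhibiting, for every sufficiently large $a$, an explicit competitor $u_a\in S(a)$ with $I_\mu(u_a)<0$; since $\mathcal{I}_{\mu,a}\le I_\mu(u_a)$, the conclusion follows. The mechanism is the behaviour of the logarithmic term under dilation of the \emph{amplitude}: if $u=a\,w$ with $w$ fixed of unit $L^p$-mass, then $\log|u|^p=\log a^p+\log|w|^p$, so the quadratic-type part of $I_\mu$ scales like $a^p$ times a fixed constant while the logarithm contributes an additional factor $-a^p\log a$, which dominates and pushes the energy to $-\infty$ as $a\to+\infty$.

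Concretely, I would fix once and for all a function $w_0\in \mathcal{C}_c^\infty(\R^N)\setminus\{0\}$ and, after multiplying by a positive constant, arrange $\int_{\R^N}|w_0|^p\,dx=1$, so that $w_0\in S(1)$. Since $w_0$ is bounded with compact support, one has $w_0\in W^{1,p}(\R^N)\cap L^{F_1}(\R^N)=X$, and $|w_0|^p\log|w_0|^p\in L^1(\R^N)$ because the integrand vanishes at the zeros of $w_0$ and is bounded elsewhere; hence the constant
$$
C(w_0)\triangleq\frac1p\int_{\R^N}\big(|\nabla w_0|^p+(\mu+1)|w_0|^p\big)\,dx-\frac1p\int_{\R^N}|w_0|^p\log|w_0|^p\,dx
$$
is finite (it depends on $\mu$, which is harmless). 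For $a>0$ set $u_a\triangleq a\,w_0$, so that $\int_{\R^N}|u_a|^p\,dx=a^p$, i.e. $u_a\in S(a)$. Using $\log|u_a|^p=\log a^p+\log|w_0|^p$ and $\int_{\R^N}|w_0|^p\,dx=1$, a direct computation gives
$$
I_\mu(u_a)=a^p\big(C(w_0)-\log a\big).
$$
Therefore, choosing $\tilde a=\tilde a(\mu)\triangleq e^{C(w_0)}>0$, one obtains $I_\mu(u_a)<0$ for every $a>\tilde a$, whence $\mathcal{I}_{\mu,a}\le I_\mu(u_a)<0$ for all such $a$ and all $\mu\in[-1,+\infty)$, which is the assertion.

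I do not expect a real obstacle here: the only points needing a little care are checking that the chosen profile $w_0$ belongs to $X$ and that its logarithmic integral is finite (so that $I_\mu(u_a)$ is well defined and the displayed identity legitimate), and these are routine — any smooth compactly supported $w_0$, or equivalently a normalized Gaussian $e^{-|x|^2}$, does the job. No compactness or finer structural property of $I_\mu$ enters this lemma; the genuine difficulty of Theorem \ref{limittheorem} lies instead in showing that the (now negative) infimum $\mathcal{I}_{\mu,a}$ is actually attained.
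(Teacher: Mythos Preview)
Your proof is correct and follows essentially the same approach as the paper: both exploit amplitude scaling $u=t\psi$ (you normalize $|w_0|_p=1$ and take $t=a$) and use that the term $-t^p\log t\int|\psi|^p\,dx$ drives $I_\mu(t\psi)\to-\infty$ as $t\to+\infty$. Your version is slightly more explicit in that you extract the closed-form threshold $\tilde a=e^{C(w_0)}$, but the underlying argument is identical.
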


\begin{proof}
Given some fixed $\psi\in X\backslash\{0\}$ and $t>0$,
it follows from some simple calculations that
$$
I_{\mu}( t\psi)=\frac{t^p}{p}\int_{\mathbb{R}^{N}}\left[|\nabla \psi |^{p}+(\mu+1)|\psi|^{p}\right]dx
-\frac{t^p}{p}\int_{\mathbb{R}^{N}}|\psi|^{p}\log|\psi|^{p}dx -t^p\log t\int_{\R^N}|\psi|^pdx
 \to-\infty
$$
as $t\to+\infty$. Hence, there is a sufficiently large constant $\tilde{t}>0$ such that $$I_{\mu}( t\psi)\leq -1\text{ for all } t> \tilde{t}.$$
Then, we can choose $\tilde{a}=\tilde{t}|\psi|_p$ to reach the statement.
\end{proof}

 \begin{lemma}\label{monotone} Let $2\leq p<N$.
Fix $\mu\in[-1,+\infty)$ and let $0<a_{1}<a_{2}<+\infty$,
then $\frac{a_{1}^{p}}{a_{2}^{p}}\mathcal{I}_{\mu,a_{2}}<\mathcal{I}_{\mu,a_{1}}$.
 \end{lemma}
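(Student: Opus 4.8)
The plan is to establish the strict scaling inequality by the standard trick of testing the functional $I_\mu$ at a rescaled minimizer (or minimizing sequence) of $\mathcal{I}_{\mu,a_1}$, using a dilation that preserves the $p$-norm ratio. Concretely, for $u\in S(a_1)$ set $\theta=a_2/a_1>1$ and consider $\theta u\in S(a_2)$, so that $\mathcal{I}_{\mu,a_2}\le I_\mu(\theta u)$. Writing out $I_\mu(\theta u)$ and comparing with $\theta^p I_\mu(u)$, the first two terms scale exactly like $\theta^p$, while the logarithmic term produces an extra contribution: one computes
\[
I_\mu(\theta u)=\theta^p I_\mu(u)-\theta^p\log\theta^p\int_{\R^N}|u|^p\,dx
=\theta^p I_\mu(u)-a_2^p\log\theta^p .
\]
Since $\theta>1$ we have $\log\theta^p>0$, hence $I_\mu(\theta u)<\theta^p I_\mu(u)$ for every $u\in S(a_1)$.

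Taking the infimum over $u\in S(a_1)$ on the right-hand side (and using that $\mathcal{I}_{\mu,a_2}\le I_\mu(\theta u)$ on the left for each such $u$) would give $\mathcal{I}_{\mu,a_2}\le \theta^p\,\mathcal{I}_{\mu,a_1}-a_2^p\log\theta^p$, i.e. $\theta^{-p}\mathcal{I}_{\mu,a_2}\le \mathcal{I}_{\mu,a_1}-a_1^p\log\theta^p<\mathcal{I}_{\mu,a_1}$, which is exactly $\frac{a_1^p}{a_2^p}\mathcal{I}_{\mu,a_2}<\mathcal{I}_{\mu,a_1}$. To make the strict inequality rigorous without assuming a priori that $\mathcal{I}_{\mu,a_1}$ is attained, I would take a minimizing sequence $\{u_n\}\subset S(a_1)$ with $I_\mu(u_n)\to\mathcal{I}_{\mu,a_1}$, apply the identity above to each $u_n$ to get $\mathcal{I}_{\mu,a_2}\le \theta^p I_\mu(u_n)-a_2^p\log\theta^p$, and pass to the limit to obtain $\mathcal{I}_{\mu,a_2}\le \theta^p\mathcal{I}_{\mu,a_1}-a_2^p\log\theta^p$. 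The strict gap $a_2^p\log\theta^p=a_2^p\log(a_2/a_1)^p>0$ then yields the strict inequality after dividing by $a_2^p$.

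The only genuine point requiring care is the legitimacy of the algebraic manipulation of the logarithmic integral: one must check that $\int_{\R^N}|\theta u|^p\log|\theta u|^p\,dx$ is finite and splits as $\theta^p\int|u|^p\log|u|^p\,dx+\theta^p\log\theta^p\int|u|^p\,dx$. This is immediate once $u\in X$, since membership in $X=W^{1,p}\cap L^{F_1}$ together with the decomposition \eqref{decomposition} and property-$(\textbf{P}_2)$ guarantees $|u|^p\log|u|^p\in L^1(\R^N)$; the identity $|\theta u|^p\log|\theta u|^p=\theta^p|u|^p\log|u|^p+\theta^p(\log\theta^p)|u|^p$ is pointwise and all three pieces are integrable. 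I do not anticipate a serious obstacle here — the lemma is essentially a one-line scaling computation once the functional-analytic framework of Section~\ref{Preliminaryresults} is in place, and the role of $\mu$ is inert since the $(\mu+1)|u|^p$ term scales homogeneously of degree $p$.
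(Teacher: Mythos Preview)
Your approach is essentially identical to the paper's: take a minimizing sequence $\{u_n\}\subset S(a_1)$, test $I_\mu$ at $\theta u_n$ with $\theta=a_2/a_1>1$, exploit the extra $-\tfrac{1}{p}\theta^p(\log\theta^p)\,a_1^p$ term produced by the logarithm, and pass to the limit. One harmless slip: because the logarithmic part of $I_\mu$ carries a factor $\tfrac{1}{p}$, the correct identity is $I_\mu(\theta u)=\theta^p I_\mu(u)-\tfrac{1}{p}a_2^p\log\theta^p$ rather than $\theta^p I_\mu(u)-a_2^p\log\theta^p$, but this does not affect the strict inequality.
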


 \begin{proof}
 Since $I_{\mu}(u)=I_{\mu}(|u|)$ for each $u\in X$, without loss of generality, we suppose that $\{u_{n}\}\subset S(a_{1})$
 is a nonnegative minimizing sequence with respect to $\mathcal{I}_{\mu,a_{1}}$, that is,
$$
I_{\mu}\left(u_{n}\right)\to\mathcal{I}_{\mu,a_{1}},~\text{as}~n\to+\infty.
$$
 Choosing $v_{n}=\xi u_{n}$, then $v_{n}\in S(a_{2})$ for every $n\in\mathbb{N}$, where
 $\xi\triangleq\frac{a_{2}}{a_{1}}>1$. It~follows~from~some~simple calculations~that
 $$
 \mathcal{I}_{\mu,a_{2}}\leq I_{\mu}\left(v_{n}\right)=\xi^{p}I_{\mu}\left(u_{n}\right)-\frac{1}{p}\xi^{p}\log\xi^{p}\int_{\mathbb{R}^{N}}
 \left|u_{n}\right|^{p}dx=\xi^{p}I_{\mu}\left(u_{n}\right)-\frac{1}{p}a_{1}^{p}\xi^{p}\log\xi^{p}.
 $$
 Letting $n\to+\infty$ and using the fact that $\xi> 1$, there holds
 $$
 \mathcal{I}_{\mu,a_{2}}\leq\xi^{p}\mathcal{I}_{\mu,a_{1}}-\frac{1}{p}a_{1}^{p}\xi^{p}\log\xi^{p}<\xi^{p}\mathcal{I}_{\mu,a_{1}},
 $$
 that is,
 $$
 \frac{a_1^p}{a_2^p}\mathcal{I}_{\mu,a_2}<\mathcal{I}_{\mu,a_1},
 $$
 finishing the proof of this lemma.
 \end{proof}

Borrowing the ideas from \cite[Theorem 3.2]{Alves1},
we derive a compactness theorem on $S(a)$ which plays pivotal role
 in the proof of Theorem
\ref{limittheorem}.

\begin{theorem}\label{compactness}
(Compactness theorem on $S(a)$) Let $2\leq p<N$.
Suppose that $a>\tilde{a}$ and $\{u_n\}\subset S(a)$ is a minimizing sequence with respect to $\mathcal{I}_{\mu,a}$,
then, for some subsequence either\\
	\noindent $\emph{i)}$  $\{u_n\}$ is strongly convergent in $X$, \\
	or \\
	\noindent $\emph{ii)}$ There exists $\{y_n\}\subset \mathbb{R}^N$ such that the sequence
$v_n(x)=u_n(x+y_n)$ is strongly convergent to a function $v\in S(a)$ in $X$ with  $I_\mu(v)=\mathcal{I}_{\mu,a}$,
where $|y_n|\to+\infty$ along a subsequence.
\end{theorem}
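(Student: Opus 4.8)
The plan is to run the concentration--compactness trichotomy of P.-L. Lions on the mass densities $\rho_n=|u_n|^p\,dx$, each of total mass $a^p$. Since $I_\mu(u_n)\to\mathcal{I}_{\mu,a}$, Lemma \ref{realnumber} forces $\{u_n\}$ to be bounded in $X$, hence in $W^{1,p}(\R^N)$ and in $L^{F_1}(\R^N)$; moreover, by \eqref{decomposition} and property-$(\textbf{P}_2)$ the sequence $\{|u_n|^p\log|u_n|^p\}$ is bounded in $L^1(\R^N)$. After passing to a subsequence, $u_n\rightharpoonup u$ in $W^{1,p}(\R^N)$ and $u_n\to u$ a.e. in $\R^N$, and Lions' principle leaves three possibilities for $\{\rho_n\}$ --- vanishing, dichotomy, or compactness up to translations --- the first two of which I would rule out.

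First, vanishing is impossible: if $\limsup_{n}\sup_{y\in\R^N}\int_{B_1(y)}|u_n|^p\,dx=0$, then $u_n\to0$ in $L^s(\R^N)$ for every $s\in(p,p^*)$; fixing $\tilde q\in(p,p^*)$ and using that $F_2(s)=0$ for $|s|\le(p-1)\delta$ together with $|F_2(s)|\le C_{\tilde q}|s|^{\tilde q}$ (property-$(\textbf{P}_2)$), we get $\int_{\R^N}F_2(u_n)\,dx\to0$, whence, using $\mu+1\ge0$ and $F_1\ge0$ (property-$(\textbf{P}_1)$),
\[
I_\mu(u_n)=\frac1p\int_{\R^N}\big(|\nabla u_n|^p+(\mu+1)|u_n|^p\big)\,dx+\int_{\R^N}F_1(u_n)\,dx-\int_{\R^N}F_2(u_n)\,dx\ \ge\ -o_n(1),
\]
contradicting $\mathcal{I}_{\mu,a}<0$ (Lemma \ref{realnumber2}, which applies since $a>\tilde a$). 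Next, dichotomy is impossible: if it occurs, there is $\alpha\in(0,a^p)$ and, for each $\varepsilon>0$, radii $R_n\to\infty$, centres $y_n$, and cut-offs producing $u_n^1$ supported in $B_{R_n}(y_n)$ and $u_n^2$ supported outside $B_{2R_n}(y_n)$ with $\int|u_n^1|^p\to\alpha$, $\int|u_n^2|^p\to a^p-\alpha$; since $u_n$ is small in $L^p$ and in $L^{\tilde q}$ on the transition annulus, each of $\int|u_n|^p$, $\int F_1(u_n)$, $\int F_2(u_n)$ splits across the two disjointly supported pieces up to an error $\le C\varepsilon$, so that
\[
I_\mu(u_n)\ \ge\ I_\mu(u_n^1)+I_\mu(u_n^2)-C\varepsilon+o_n(1)\ \ge\ \mathcal{I}_{\mu,\,|u_n^1|_p}+\mathcal{I}_{\mu,\,|u_n^2|_p}-C\varepsilon+o_n(1).
\]
Letting $n\to\infty$, then $\varepsilon\to0$, and invoking the continuity of $b\mapsto\mathcal{I}_{\mu,b}$ (a consequence of the scaling $I_\mu(tu)=t^pI_\mu(u)-\tfrac1pt^p\log t^p\int_{\R^N}|u|^p\,dx$), we obtain $\mathcal{I}_{\mu,a}\ge\mathcal{I}_{\mu,\alpha^{1/p}}+\mathcal{I}_{\mu,(a^p-\alpha)^{1/p}}$; but applying Lemma \ref{monotone} to the pairs $(\alpha^{1/p},a)$ and $\big((a^p-\alpha)^{1/p},a\big)$ and summing gives the \emph{strict} reverse inequality, a contradiction.

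Hence $\{\rho_n\}$ is tight up to translations: there is $\{y_n\}\subset\R^N$ such that $v_n(x)=u_n(x+y_n)$ obeys, for every $\varepsilon>0$, $\int_{B_R(0)}|v_n|^p\,dx\ge a^p-\varepsilon$ for some $R>0$ and all large $n$. As $I_\mu$ is autonomous, $\{v_n\}\subset S(a)$ is still minimizing; after a subsequence, $v_n\rightharpoonup v$ in $W^{1,p}(\R^N)$ and $v_n\to v$ a.e. Tightness together with Rellich-Kondrachov on balls gives $v_n\to v$ in $L^p(\R^N)$, so $v\in S(a)$ and $v\not\equiv0$, and interpolating with the Gagliardo-Nirenberg inequality \eqref{GN} yields $v_n\to v$ in $L^s(\R^N)$ for all $s\in[p,p^*)$, hence $\int_{\R^N}F_2(v_n)\,dx\to\int_{\R^N}F_2(v)\,dx$. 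Using weak lower semicontinuity of $w\mapsto\int_{\R^N}|\nabla w|^p\,dx$ and Fatou's lemma for the nonnegative integrands $F_1(v_n)$,
\[
\mathcal{I}_{\mu,a}=\lim_nI_\mu(v_n)\ \ge\ I_\mu(v)\ \ge\ \mathcal{I}_{\mu,a},
\]
so $I_\mu(v)=\mathcal{I}_{\mu,a}$ and all inequalities are equalities, which forces $\int_{\R^N}|\nabla v_n|^p\,dx\to\int_{\R^N}|\nabla v|^p\,dx$ and $\int_{\R^N}F_1(v_n)\,dx\to\int_{\R^N}F_1(v)\,dx$. Uniform convexity of $L^p$ then upgrades $\nabla v_n\rightharpoonup\nabla v$ to strong convergence, so $v_n\to v$ in $W^{1,p}(\R^N)$; and the $F_1$-convergence together with the Br\'{e}zis-Lieb splitting for $F_1$ (valid since $F_1\in(\Delta_2)$ by Lemma \ref{F1}, exactly as in Lemma \ref{BrezisLieb}) gives $\int_{\R^N}F_1(v_n-v)\,dx\to0$, i.e. $v_n\to v$ in $L^{F_1}(\R^N)$; altogether $v_n\to v$ in $X$, with $v\in S(a)$ and $I_\mu(v)=\mathcal{I}_{\mu,a}$. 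Finally, after a further subsequence either $\{y_n\}$ is bounded, so $y_n\to y_0$ and, by continuity of translations in $W^{1,p}(\R^N)$ and in $L^{F_1}(\R^N)$ (the latter from the $\Delta_2$-condition), $u_n=v_n(\cdot-y_n)\to v(\cdot-y_0)$ in $X$, which is alternative i); or $|y_n|\to+\infty$, which is alternative ii).

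The main obstacle is the loss of the Hilbert structure available in the model case $p=2$ of \cite[Theorem 3.2]{Alves1}: one no longer has the exact splitting $\int|\nabla v_n|^p=\int|\nabla v|^p+\int|\nabla(v_n-v)|^p+o_n(1)$ that would permit a direct Br\'{e}zis-Lieb exclusion of dichotomy. This is why I would route the exclusion of dichotomy through Lions' trichotomy with disjoint-support cut-offs --- where the energy splits exactly up to an $O(\varepsilon)$ transition term --- and only recover the strong convergence of the gradients a posteriori, via uniform convexity, once no mass is lost. A secondary technicality is the bookkeeping on the transition annulus, namely checking that each of $\int|u_n|^p$, $\int F_1(u_n)$, $\int F_2(u_n)$ is negligible there, which relies on the smallness of $u_n$ in $L^p$ and in $L^{\tilde q}$ on that region.
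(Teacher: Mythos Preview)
Your argument is correct and takes a genuinely different route from the paper. The paper does \emph{not} run Lions' trichotomy; instead it passes to a weak limit $u$ of $\{u_n\}$, writes $v_n=u_n-u$, asserts the full Br\'{e}zis--Lieb type splitting
\[
I_\mu(u_n)=I_\mu(v_n)+I_\mu(u)+o_n(1),
\]
and combines this with the strict subadditivity of Lemma \ref{monotone} to force $|u|_p=a$; the case $u=0$ is then handled separately by excluding vanishing exactly as you do. In other words, the paper treats dichotomy and the ``wrong mass'' case simultaneously via the splitting of $I_\mu$ along $u$ and $u_n-u$.

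The difference you flag is real: the paper's identity requires, for the gradient part, $\int|\nabla u_n|^p=\int|\nabla(u_n-u)|^p+\int|\nabla u|^p+o_n(1)$, which for $p=2$ follows from weak convergence alone but for $p>2$ is the Br\'{e}zis--Lieb lemma applied to $\nabla u_n$ and so tacitly presupposes a.e.\ convergence of gradients. Your Lions--trichotomy argument sidesteps this entirely, since in the dichotomy step the two pieces $u_n^1,u_n^2$ have \emph{disjoint supports} and the gradient energy splits exactly up to a cut--off error that you control; the strong convergence of gradients is then recovered only \emph{after} tightness, via norm convergence plus uniform convexity. What the paper's route buys is brevity (no explicit cut--off bookkeeping), at the cost of the unstated gradient hypothesis; what your route buys is a self--contained argument valid for all $p\ge2$ without appealing to a.e.\ gradient convergence. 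Both routes rest on the same engine, Lemma \ref{monotone}, to exclude mass splitting.
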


\begin{proof}
Since $I_\mu$ is coercive on $S(a)$ by Lemma \ref{realnumber},
the sequence $\{u_n\}$ is bounded, and then, $u_n \rightharpoonup u$ in $X$ for some subsequence.
 If $u \not=0$ and $|u|_p=b \not=a$, we must have $b \in (0,a)$. By the Br\'{e}zis-Lieb Lemma (see e.g. \cite{Willem}),
	$$
	|u_n|_p^{p}=|u_n-u|_p^{p}+|u|_p^{p}+o_n(1).
	$$
	Setting $v_n=u_n-u$, $d_n=|v_n|_p$ and supposing that $|v_n|_p \to d$, we get $a^p=b^p+d^p$. From $d_n \in (0,a)$ for $n$ large enough,
we apply Lemma \ref{BrezisLieb} to see that
	$$
	\mathcal{I}_{\mu,a}+o_n(1)=I_{\mu}(u_n)=I_{\mu}(v_n)+I_{\mu}(u)+o_n(1)\geq \mathcal{I}_{\mu,d_n}+\mathcal{I}_{\mu,b}+o_n(1).
	$$
Thereby, by Lemma \ref{monotone},
	$$
	\mathcal{I}_{\mu,a}+o_n(1) \geq \frac{d_n^p}{a^p}\mathcal{I}_{\mu,a}+\mathcal{I}_{\mu,b}+o_n(1).
	$$
	Letting $n \to +\infty$, we find
	\begin{equation}\label{newine}
		\mathcal{I}_{\mu,a} \geq \frac{d^p}{a^p}\mathcal{I}_{\mu,a}+\mathcal{I}_{\mu,b}.
	\end{equation}
	Since $b \in (0,a)$, using again Lemma \ref{monotone} in \eqref{newine}, we get the following inequality
	$$
	\mathcal{I}_{\mu,a} > \frac{d^p}{a^p}\mathcal{I}_{\mu,a}+\frac{b^p}{a^p}\mathcal{I}_{\mu,b}
=\left(\frac{d^p}{a^p}+\frac{b^p}{a^p}\right)\mathcal{I}_{\mu,a}=\mathcal{I}_{\mu,a},
	$$
	which is absurd. This asserts that $|u|_p=a$, or equivalently, $u \in S(a)$.
As $|u_n|_p=|u|_p=a$, $u_n \rightharpoonup u$ in $L^{p}(\mathbb{R}^N)$ and $L^{p}(\mathbb{R}^N)$ is reflexive, it is well-known that
	\begin{equation} \label{(a)}
		u_n \to u~ \mbox{in}~ L^{p}(\mathbb{R}^N).
	\end{equation}
	This combined with interpolation theorem in the Lebesgue space and property-$(\textbf{P}_2)$ gives
	\begin{equation} \label{F}
		\int_{\mathbb{R}^N}F_2(u_n) dx\to \int_{\mathbb{R}^N}F_2(u) dx.
	\end{equation}
	These limits together with $\mathcal{I}_{\mu,a}=\displaystyle \lim_{n \to +\infty}I_\mu(u_n)$ and $F_1\geq0$
in property-$(\textbf{P}_1)$
indicate that
	$$
	\mathcal{I}_{\mu,a}\geq I_\mu(u).
	$$
	As $u \in S(a)$, therefore $I_\mu(u)=\mathcal{I}_{\mu,a}$, then
	$$
	\lim_{n \to +\infty}I_\mu(u_n)= I_\mu(u),
	$$
	that combines with (\ref{(a)}) and (\ref{F}) to give
$$
\lim_{n\to\infty}\int_{\mathbb{R}^N}|\nabla u_n|^p dx=\int_{\mathbb{R}^N}|\nabla u |^p  dx
$$
and
	$$
	\lim_{n\to\infty}\int_{\mathbb{R}^N}F_1(u_n) dx=\int_{\mathbb{R}^N}F_1(u) dx.
	$$
Recalling $F_1\in(\Delta_2)$ by Lemma \ref{F1},
jointly with the above two limits as well as \eqref{(a)},
it enables to see that $u_n \to u$ in $X$.
	
	Now, assume that $u=0$, that is, $u_n \rightharpoonup 0$ in $X$. We
claim that there exists $C>0$ such that
	\begin{equation} \label{EQ2}
		\int_{\mathbb{R}^N}F_2(u_n) dx\geq C,~\text{for}~ n \in \mathbb{N}~\text{large enough}.
	\end{equation}
	Otherwise, there is a subsequence of $\{u_n\}$, still denoted by itself, such that
\[
\int_{\mathbb{R}^N}F_2(u_n) dx\to0
\]
as $n\to\infty$. It follows from Lemma \ref{realnumber2} and \eqref{EQ2} that
$$
0>\mathcal{I}_{\mu,a}=\lim_{n\to\infty}I_\mu(u_n)=\lim_{n\to\infty}\left(
\frac{1}{p}\int_{\mathbb{R}^{N}} [|\nabla u_n |^{p}+(\mu+1)|u_n|^p]dx+
\int_{\mathbb{R}^N}F_1 (u_n) dx\right)\geq0
$$
which is impossible.

So, there are $R,C>0$ and $\{y_n\}\subset \mathbb{R}^N$ such that
\begin{equation} \label{EQ3}
\int_{B_R(y_n)}|u_n|^p dx\geq C,~\text{for all}~ n \in \mathbb{N}.
	\end{equation}
If it
 it is not the case, then we derive $u_n\to0$ for all $p<s<p^*$ by the Vanishing lemma
which yields that $F_2(u_n)\to0$ in $L^1(\R^N)$ by property-$(\textbf{P}_2)$, a contradiction to \eqref{EQ2}.
Recalling $u=0$, we further have that $\{y_n\}$ is unbounded in $\R^N$.
Define $v_n(x)=u_n(x+y_n)$, then $\{v_n\}\subset S(a)$ and it is also a minimizing sequence with respect to $\mathcal{I}_{\mu,a}$.
Moreover, owing to \eqref{EQ3}, passing to a subsequence if necessary, there is a $v\in X\backslash\{0\}$ such that
$$
v_n\rightharpoonup v~\text{in}~X~\text{and}~v_n\rightharpoonup v~\text{a.e. in}~\R^N.
$$
Repeating the calculations in the first part of the proof,
it must conclude that $v_n\to v$ in $X$ along a subsequence.
The proof is completed.
\end{proof}

\begin{proof}[\textbf{\emph{Proof of Theorem \ref{limittheorem}}}]
By Lemma \ref{realnumber}, there exists a bounded minimizing sequence $\{u_n\}\subset S(a)$ with respect to $\mathcal{I}_{\mu,a}$,
that is, $I_\mu(u_n) \to \mathcal{I}_{\mu,a}$.
Thanks to Theorem \ref{compactness}, there exists a $u_a\in S(a)$ with  $I_\mu(u_a)=\mathcal{I}_{\mu,a}$. Therefore, by the Lagrange multiplier
  theorem, there exists $\lambda_a\in\R$ such that
\begin{equation} \label{EQ4}
	I_\mu'(u_a)=\lambda_a \Psi'(u_a) ~ \mbox{in} ~ X^*,
\end{equation}
where $\Psi:X \to \mathbb{R}$ is given by
$$
\Psi(u)=\frac1p\int_{\mathbb{B}^N}|u|^pdx,~u \in X.
$$
Thereby, according to (\ref{EQ4}), the couple $(u_a,\lambda_a)\in S(a)\times \R$ satisfies the following equation
$$
-\Delta_pu +\mu|u |^{p-2}u =\lambda|u |^{p-2}u +|u |^{p-2}u \log|u |^p,~\text{in}~\mathbb{R}^N.
$$
Let $u_a\in S(a)$ be a test function on the both sides for the above equation, it holds that
$$
\int_{\mathbb{R}^N}\left(|\nabla u_a|^{p } +\mu|u_a|^{p}\right)dx=\lambda_aa^p
+\int_{\mathbb{R}^N}|u_a |^{p} \log|u_a |^pdx
$$
which indicates that
$$
\mathcal{I}_{\mu,a}=I_\mu(u_a)=\frac{1}{p}\int_{\mathbb{R}^{N}} |u_{a}|^{p} dx
+\frac{\lambda_a}{p}a^p\geq \frac{\lambda_a}{p}a^p.
$$
Due to Lemma \ref{realnumber2}, one sees that $\lambda_a<0$. Since $u\in S(a)$ implies that $|u|\in S(a)$ and $I_\mu(u)= I_\mu(|u|)$
 for all $u\in X$
which give that
$$|u_a|\in S(a)
~\text{and}~
\mathcal{I}_{\mu,a}=I_\mu(u_a)\geq I_\mu(|u_a|)\geq \mathcal{I}_{\mu,a}.
$$
So, we can replace $u_a$ with $|u_a|$ and then, without loss of generality, we shall
suppose that $u_a\geq0$. A very similar arguments in Step 3 in the proof of Theorem \ref{Pohozaev} shows that
 $u_a$ is positive. The proof is completed.
\end{proof}

Thanks to Theorem \ref{limittheorem}, we immediately have the following result whose detailed proof is omitted.

\begin{corollary}\label{limitcorollary}
Let $2\leq p<N$. If $a>\tilde{a}$ and $-1\leq \mu_1<\mu_2<+\infty$ are fixed, then
$\mathcal{I}_{\mu_1,a}<\mathcal{I}_{\mu_2,a}<0$.
\end{corollary}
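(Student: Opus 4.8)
The plan is to exploit the elementary observation that, once restricted to the sphere $S(a)$, the two functionals $I_{\mu_1}$ and $I_{\mu_2}$ differ only by an explicit constant. Indeed, recalling the expression
\[
I_\mu(u)=\frac1p\int_{\R^N}\left[|\nabla u|^p+(\mu+1)|u|^p\right]dx-\frac1p\int_{\R^N}|u|^p\log|u|^p\,dx,
\]
the gradient term and the logarithmic term are independent of $\mu$, so for every $u\in S(a)$, where $\int_{\R^N}|u|^p\,dx=a^p$, one has
\[
I_{\mu_2}(u)=I_{\mu_1}(u)+\frac{\mu_2-\mu_1}{p}\int_{\R^N}|u|^p\,dx=I_{\mu_1}(u)+\frac{(\mu_2-\mu_1)a^p}{p},
\]
and the added term is strictly positive because $\mu_1<\mu_2$ and $a>0$. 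First I would record this identity; everything else follows from it.

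Next I would take the infimum over $u\in S(a)$ on both sides. Since the added quantity is a constant (independent of $u$), it passes through the infimum, yielding
\[
\mathcal{I}_{\mu_2,a}=\mathcal{I}_{\mu_1,a}+\frac{(\mu_2-\mu_1)a^p}{p}>\mathcal{I}_{\mu_1,a},
\]
which is exactly the first inequality $\mathcal{I}_{\mu_1,a}<\mathcal{I}_{\mu_2,a}$. For the second inequality, $\mathcal{I}_{\mu_2,a}<0$, I would simply invoke Lemma \ref{realnumber2} with $\mu=\mu_2$. The only small point requiring attention is the $\mu$-dependence of the mass threshold: inspecting the proof of Lemma \ref{realnumber2}, the constant $\tilde a=\tilde t\,|\psi|_p$ arises from a $\tilde t$ for which $I_\mu(t\psi)\leq -1$ for $t>\tilde t$, and since $\mu\mapsto I_\mu(t\psi)$ is nondecreasing (the factor $(\mu+1)$ multiplies the nonnegative quantity $\int_{\R^N}|\psi|^p\,dx$), one may take $\tilde t$, hence $\tilde a$, nondecreasing in $\mu$. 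Thus, interpreting the ``$\tilde a$'' of the corollary as $\tilde a:=\tilde a(\mu_2)$, the hypothesis $a>\tilde a$ gives both $a>\tilde a(\mu_2)$ and $a>\tilde a(\mu_1)$, so Lemma \ref{realnumber2} applies to $\mu_2$ and delivers $\mathcal{I}_{\mu_2,a}<0$. Chaining this with the previous display gives $\mathcal{I}_{\mu_1,a}<\mathcal{I}_{\mu_2,a}<0$.

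As for difficulty: there is essentially no obstacle here. The statement is an immediate corollary of the affine dependence of $I_\mu$ on $\mu$ along $S(a)$ together with Lemma \ref{realnumber2}; the only thing worth stating carefully is that the threshold $\tilde a$ must be (or be bounded below by) the one associated with the larger parameter $\mu_2$, which is harmless since that threshold can be chosen monotone in $\mu$. This is precisely why the authors omit the detailed proof.
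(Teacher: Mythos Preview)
Your argument is correct and is essentially the route the paper has in mind: both rest on the fact that $I_{\mu_2}(u)-I_{\mu_1}(u)=\frac{(\mu_2-\mu_1)a^p}{p}$ for every $u\in S(a)$, together with Lemma~\ref{realnumber2} for the negativity. The only cosmetic difference is that the paper points to Theorem~\ref{limittheorem} (using the minimizer $u_{\mu_2}$ to write $\mathcal{I}_{\mu_1,a}\le I_{\mu_1}(u_{\mu_2})<I_{\mu_2}(u_{\mu_2})=\mathcal{I}_{\mu_2,a}$), whereas you pass the constant directly through the infimum---your version is slightly more elementary since it does not require the infimum to be attained.
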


From now on, we begin investigating the existence and concentration behavior
of positive solutions for \eqref{mainequation1}-\eqref{mainequation1a}.
To the aims, we consider the variational functional $I_\varepsilon:X\to\R$
given by
\begin{equation}\label{Ifunctional}
 I_\varepsilon (u) =
\frac1p\int_{\mathbb{R}^N}\left[|\nabla u|^{p } +(V(\varepsilon x)+1)|u|^{p}\right]dx
+\int_{\mathbb{R}^N}F_1 (u)dx-\int_{\mathbb{R}^N}F_2 (u) dx
\end{equation}
restricted to the sphere $S(a)$ and the minimization problem
$$
\mathcal{I}_{\varepsilon,a}=\inf_{u\in S(a)}I_{\varepsilon}(u).
$$

According to Theorem \ref{compactness}, it is significant
to derive a similar compactness theorem for $I_\varepsilon$ on $S(a)$.
So, we shall focus on verifying it. Let us introduce the two functionals
$I_0,I_\infty:S(a)\to\R$ defined by
$$
\left\{
  \begin{array}{ll}
\displaystyle I_0 (u) =
\frac1p\int_{\mathbb{R}^N}\left[|\nabla u|^{p } +(V_0+1)|u|^{p}\right]dx
+\int_{\mathbb{R}^N}F_1 (u)dx-\int_{\mathbb{R}^N}F_2 (u) dx,
\\
\displaystyle I_\infty (u) =
\frac1p\int_{\mathbb{R}^N}\left[|\nabla u|^{p } +(V_\infty+1)|u|^{p}\right]dx
+\int_{\mathbb{R}^N}F_1 (u)dx-\int_{\mathbb{R}^N}F_2 (u) dx.
  \end{array}
\right.
$$
The corresponding minimization problems are given by
$$
\mathcal{I}_{0,a}=\inf_{u\in S(a)}I_{0}(u)~\text{and}~
\mathcal{I}_{\infty,a}=\inf_{u\in S(a)}I_{\infty}(u).
$$
Since $-1\leq V_0<V_\infty<+\infty$ by $(\hat{V}_1)$, we are derived from Corollary \ref{limitcorollary}
that
\begin{equation}\label{5}
\mathcal{I}_{0,a}<\mathcal{I}_{\infty,a}<0~\text{for all}~a>\tilde{a}>0.
\end{equation}

\begin{lemma}\label{limitrealnumber}
Let $2\leq p<N$ and $a>\tilde{a}>0$, then $\limsup\limits_{\varepsilon\to0^+}\mathcal{I}_{\varepsilon,a}\leq \mathcal{I}_{0,a}$.
In particular, there exists a sufficiently small $\varepsilon^*>0$ such that
$\mathcal{I}_{\varepsilon,a}< \mathcal{I}_{\infty,a}$ for all $\varepsilon\in(0,\varepsilon^*)$.
\end{lemma}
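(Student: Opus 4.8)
The plan is to compare $\mathcal{I}_{\varepsilon,a}$ with $\mathcal{I}_{0,a}$ by testing $I_\varepsilon$ on translates of a near-minimizer for $I_0$, and then to combine the resulting limsup bound with the strict inequality $\mathcal{I}_{0,a}<\mathcal{I}_{\infty,a}$ from \eqref{5}. First I would fix $\eta>0$ and pick $w\in S(a)$ with $I_0(w)\le \mathcal{I}_{0,a}+\eta$. By density I may moreover assume $w\in \mathcal{C}_c^\infty(\mathbb{R}^N)$ (or at least $w\in X$ with compact support), so that $\supp w\subset B_\rho(0)$ for some $\rho>0$; here I would use that $\mathcal{C}_c^\infty$ is dense in $X$ in the $\|\cdot\|$-norm together with the continuity of $u\mapsto I_0(u)$ on $X$, which follows from the continuous embeddings $X\hookrightarrow W^{1,p}(\mathbb{R}^N)$, $X\hookrightarrow L^{F_1}(\mathbb{R}^N)$, property-$(\textbf{P}_2)$ and \eqref{2}. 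Since $x^1=0\in V^{-1}(\{V_0\})$ by $(V_2)$, recentering is not even needed: as $\varepsilon\to 0^+$, $V(\varepsilon x)\to V(0)=V_0$ uniformly on the compact set $\supp w$. Therefore
\[
I_\varepsilon(w)=I_0(w)+\frac1p\int_{\mathbb{R}^N}\bigl(V(\varepsilon x)-V_0\bigr)|w|^p\,dx\le I_0(w)+\frac1p\,\|w\|_{L^p}^p\sup_{x\in\supp w}|V(\varepsilon x)-V_0|,
\]
and the last term tends to $0$ as $\varepsilon\to0^+$. Since $w\in S(a)$ we have $\mathcal{I}_{\varepsilon,a}\le I_\varepsilon(w)$, hence $\limsup_{\varepsilon\to0^+}\mathcal{I}_{\varepsilon,a}\le I_0(w)\le \mathcal{I}_{0,a}+\eta$, and letting $\eta\to0^+$ gives the first assertion $\limsup_{\varepsilon\to0^+}\mathcal{I}_{\varepsilon,a}\le \mathcal{I}_{0,a}$.

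For the second assertion, by \eqref{5} we have $\mathcal{I}_{0,a}<\mathcal{I}_{\infty,a}$ for all $a>\tilde a$, so fixing such an $a$ and setting $2\kappa\triangleq \mathcal{I}_{\infty,a}-\mathcal{I}_{0,a}>0$, the limsup bound just proved yields $\varepsilon^*>0$ such that $\mathcal{I}_{\varepsilon,a}<\mathcal{I}_{0,a}+\kappa=\mathcal{I}_{\infty,a}-\kappa<\mathcal{I}_{\infty,a}$ for all $\varepsilon\in(0,\varepsilon^*)$, which is exactly the claim.

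The only genuinely delicate point is the density/approximation step: one must be sure that a minimizer (or near-minimizer) of $I_0$ on $S(a)$ can be approximated in $X$ by compactly supported functions \emph{while staying (approximately) on the sphere} $S(a)$ and with $I_0$-values converging. A clean way to handle this is to first invoke Theorem \ref{limittheorem} (with $\mu=V_0\in[-1,+\infty)$) to obtain an actual minimizer $u_0\in S(a)$ with $I_0(u_0)=\mathcal{I}_{0,a}$, then truncate $u_0$ by $\chi_R u_0$ with $\chi_R$ a standard cutoff supported in $B_{2R}$, noting $\chi_R u_0\to u_0$ in $W^{1,p}(\mathbb{R}^N)$ and in $L^{F_1}(\mathbb{R}^N)$ (the latter via $F_1\in(\Delta_2)$ from Lemma \ref{F1} and dominated convergence), hence $I_0(\chi_R u_0)\to I_0(u_0)$ and $|\chi_R u_0|_p\to a$; finally renormalize $w_R\triangleq \tfrac{a}{|\chi_R u_0|_p}\,\chi_R u_0\in S(a)$, and observe that the scaling identity $I_0(t\psi)=t^pI_0(\psi)-\tfrac1p t^p\log t^p\int_{\mathbb{R}^N}|\psi|^p\,dx$ together with $t=\tfrac{a}{|\chi_R u_0|_p}\to 1$ gives $I_0(w_R)\to I_0(u_0)=\mathcal{I}_{0,a}$. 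Choosing $R$ large enough furnishes the desired compactly supported near-minimizer $w$, and the rest of the argument proceeds as above.
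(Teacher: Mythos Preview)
Your proof is correct, but you take a longer route than the paper does. The paper simply invokes Theorem~\ref{limittheorem} to get an actual minimizer $u_0\in S(a)$ with $I_0(u_0)=\mathcal{I}_{0,a}$, and then applies the Lebesgue Dominated Convergence theorem directly to
\[
\mathcal{I}_{\varepsilon,a}\le I_\varepsilon(u_0)=I_0(u_0)+\frac1p\int_{\mathbb{R}^N}\bigl(V(\varepsilon x)-V_0\bigr)|u_0|^p\,dx,
\]
using only that $V$ is continuous and bounded (which follows from $(\hat V_1)$) so that $|V(\varepsilon x)-V_0|\,|u_0|^p$ is dominated by a fixed multiple of $|u_0|^p\in L^1(\mathbb{R}^N)$. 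No truncation, density argument, or renormalization to $S(a)$ is needed. Your approach---approximating by compactly supported functions so as to use uniform convergence of $V(\varepsilon\,\cdot)$ on compact sets---also works and is a standard alternative when one is unsure about an integrable majorant, but here it is extra machinery. The second assertion via \eqref{5} is handled identically in both arguments.
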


\begin{proof}
Adopting Theorem \ref{limittheorem}, for all $a>\tilde{a}>0$, there is a
$u_0\in S(a)$ such that $I_{0}(u_0)=\mathcal{I}_{0,a}$.
So,
$$
\mathcal{I}_{\varepsilon,a}\leq I_{\varepsilon}(u_0)
=\frac1p\int_{\mathbb{R}^N}\left[|\nabla u_0|^{p } +(V(\varepsilon x)+1)|u_0|^{p}\right]dx
+\int_{\mathbb{R}^N}F_1 (u_0)dx-\int_{\mathbb{R}^N}F_2 (u_0) dx.
$$
Adopting the Lebesgue's theorem and taking the limit as $\varepsilon\to0^+$, there holds
$$
\limsup\limits_{\varepsilon\to0^+}\mathcal{I}_{\varepsilon,a}\leq\limsup\limits_{\varepsilon\to0^+}
I_{\varepsilon}(u_0)=I_{0}(u_0)=\mathcal{I}_{0,a}
$$
finishing the first part of the lemma. Due to \eqref{5}, one could find such a
constant $\varepsilon^*>0$ such that $\mathcal{I}_{\varepsilon,a}< \mathcal{I}_{\infty,a}$ for all $\varepsilon\in(0,\varepsilon^*)$.
The proof is completed.
\end{proof}

\begin{lemma}\label{limitrealnumber}
Let $2\leq p<N$ and $a>\tilde{a}>0$. If $\varepsilon\in(0,\varepsilon^*)$ is fixed and suppose that
$\{u_n\}\subset S(a)$ such that $I_\varepsilon(u_n)\to \hat{d}<\frac{\mathcal{I}_{0,a}+\mathcal{I}_{\infty,a}}{2}$,
then there is a $u\not=0$ such that $u_n\rightharpoonup u$ in $X$ along a subsequence.
\end{lemma}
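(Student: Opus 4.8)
The plan is to argue by contradiction: suppose that every minimizing-type sequence $\{u_n\}$ with $I_\varepsilon(u_n)\to\hat d<\tfrac{\mathcal I_{0,a}+\mathcal I_{\infty,a}}{2}$ converges weakly to $0$ in $X$. Since $I_\varepsilon$ is coercive on $S(a)$ (by exactly the same Gagliardo–Nirenberg estimate used in Lemma~\ref{realnumber}, with $V(\varepsilon x)+1\geq V_0+1\geq0$ replacing $\mu+1$), the sequence $\{u_n\}$ is bounded in $X$, so $u_n\rightharpoonup u$ for some $u\in X$ along a subsequence. Our goal is to rule out $u=0$.

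First I would establish a non-vanishing bound of the type $\int_{\mathbb{R}^N}F_2(u_n)\,dx\geq C>0$ for $n$ large. Indeed, if $\int F_2(u_n)\to0$ along a subsequence, then from $F_1\geq0$ and $V(\varepsilon x)+1\geq0$ we get
$$
\hat d=\lim_{n\to\infty}I_\varepsilon(u_n)=\lim_{n\to\infty}\left(\frac1p\int_{\mathbb{R}^N}\big[|\nabla u_n|^p+(V(\varepsilon x)+1)|u_n|^p\big]dx+\int_{\mathbb{R}^N}F_1(u_n)\,dx-\int_{\mathbb{R}^N}F_2(u_n)\,dx\right)\geq0,
$$
contradicting $\hat d<\tfrac{\mathcal I_{0,a}+\mathcal I_{\infty,a}}{2}<0$, where the last strict inequality uses \eqref{5}. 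Then, exactly as in the proof of Theorem~\ref{compactness}, the lower bound on $\int F_2(u_n)$ together with property-$(\textbf{P}_2)$ and the Lions vanishing lemma forces the existence of $R,C>0$ and $\{y_n\}\subset\mathbb{R}^N$ with $\int_{B_R(y_n)}|u_n|^p\,dx\geq C$ for all $n$. If $\{y_n\}$ were bounded, this would already give $u\neq0$, so assume $|y_n|\to+\infty$ and set $v_n(x)=u_n(x+y_n)$, so that $v_n\rightharpoonup v\neq0$ in $X$ along a subsequence.

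The heart of the argument — and the step I expect to be the main obstacle — is to compare the energy level $\hat d$ with $I_\infty$ evaluated along the translated sequence, and thereby reach a contradiction with $\hat d<\tfrac{\mathcal I_{0,a}+\mathcal I_{\infty,a}}{2}$. Because $|y_n|\to+\infty$, the potential seen by $v_n$, namely $V(\varepsilon x+\varepsilon y_n)$, converges to $V_\infty$ uniformly on compact sets by $(\hat V_1)$; combining this with the weak lower semicontinuity of the gradient and $|\cdot|^p$ terms, with the Brézis–Lieb splitting for $F_1$ (Lemma~\ref{BrezisLieb} and property-$(\textbf{P}_1)$) and the compactness of the $F_2$ term on bounded translated sequences via $(\textbf{P}_2)$, one should obtain a Brézis–Lieb type decomposition
$$
\hat d\geq I_\infty(v)+\liminf_{n\to\infty}I_\varepsilon(u_n-T_{y_n}v)\geq \mathcal I_{\infty,|v|_p}+\mathcal I_{0,(a^p-|v|_p^p)^{1/p}},
$$
where $T_{y_n}v(x)=v(x-y_n)$, using that $V(\varepsilon x)+1\geq V_0+1$ bounds the residual energy below by $I_0$ on the appropriate sphere. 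Writing $b=|v|_p\in(0,a]$ and applying the scaling monotonicity of Lemma~\ref{monotone} to both $\mathcal I_{\infty,b}$ and $\mathcal I_{0,(a^p-b^p)^{1/p}}$ (pushing both masses up to $a$), one gets $\hat d\geq \tfrac{b^p}{a^p}\mathcal I_{\infty,a}+\tfrac{a^p-b^p}{a^p}\mathcal I_{0,a}$, and since $\mathcal I_{0,a}<\mathcal I_{\infty,a}<0$ by \eqref{5}, the right-hand side is a convex combination lying in the interval $[\mathcal I_{0,a},\mathcal I_{\infty,a})$; in particular, being at least $\min\{\mathcal I_{0,a},\mathcal I_{\infty,a}\}=\mathcal I_{0,a}$ is not enough, so one must check the sharper bound that it exceeds $\tfrac{\mathcal I_{0,a}+\mathcal I_{\infty,a}}{2}$. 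This will require splitting into the cases $b^p/a^p\leq1/2$ and $b^p/a^p>1/2$ and using $\mathcal I_{\infty,a}>\mathcal I_{0,a}$: when the mass $b$ carried off to infinity is small the residual term $\tfrac{a^p-b^p}{a^p}\mathcal I_{0,a}$ dominates, while when $b$ is large the $\mathcal I_{\infty,a}$ term, being strictly larger than $\mathcal I_{0,a}$, lifts the average. Either way one contradicts $\hat d<\tfrac{\mathcal I_{0,a}+\mathcal I_{\infty,a}}{2}$, so $u=0$ is impossible and the proof is complete. The delicate point throughout is making the energy splitting rigorous for the logarithmic nonlinearity with the $x$-dependent potential $V(\varepsilon\cdot)$, which is why Lemma~\ref{BrezisLieb} and the decomposition \eqref{decomposition} are invoked rather than handling $|u|^p\log|u|^p$ directly.
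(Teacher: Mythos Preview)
Your concentration-compactness route is far more elaborate than what the paper does, and more importantly the final contradiction does not close. You arrive at the bound
\[
\hat d \;\geq\; \frac{b^p}{a^p}\,\mathcal I_{\infty,a} + \frac{a^p-b^p}{a^p}\,\mathcal I_{0,a},
\]
and then assert that a case split on $b^p/a^p\lessgtr 1/2$ forces this convex combination above $\tfrac{\mathcal I_{0,a}+\mathcal I_{\infty,a}}{2}$. That is simply false: setting $t=b^p/a^p$, the map $t\mapsto t\,\mathcal I_{\infty,a}+(1-t)\mathcal I_{0,a}$ is increasing (since $\mathcal I_{\infty,a}>\mathcal I_{0,a}$), so for any $t<1/2$ the combination is \emph{strictly below} $\tfrac{\mathcal I_{0,a}+\mathcal I_{\infty,a}}{2}$. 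Hence when the profile $v$ captured at infinity carries less than half the mass, your inequality is perfectly compatible with $\hat d<\tfrac{\mathcal I_{0,a}+\mathcal I_{\infty,a}}{2}$ and no contradiction follows. Extracting only one profile and bounding the residual via $\mathcal I_{0,\cdot}$ throws away exactly the information you need: once $u_n\rightharpoonup 0$, \emph{all} the mass is escaping to infinity, and you should be comparing the full energy with $\mathcal I_{\infty,a}$, not splitting off part of it against $\mathcal I_{0,\cdot}$.

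The paper's argument does precisely this, directly and with no profile extraction. If $u_n\rightharpoonup 0$ in $X$, write
\[
I_\varepsilon(u_n)=I_\infty(u_n)+\frac1p\int_{\R^N}\bigl[V(\varepsilon x)-V_\infty\bigr]|u_n|^p\,dx.
\]
By $(\hat V_1)$, for any $\epsilon>0$ there is $R>0$ with $V(x)\geq V_\infty-\epsilon$ for $|x|\geq R$; splitting the integral over $B_{R/\varepsilon}(0)$ and its complement, the inner part vanishes since $u_n\to 0$ in $L^p_{\mathrm{loc}}$ by compact embedding, while the outer part is $\geq -\epsilon a^p/p$. Thus $\hat d\geq \mathcal I_{\infty,a}-C\epsilon$, and letting $\epsilon\to 0$ yields $\hat d\geq \mathcal I_{\infty,a}$, contradicting $\hat d<\tfrac{\mathcal I_{0,a}+\mathcal I_{\infty,a}}{2}<\mathcal I_{\infty,a}$ by \eqref{5}. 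No Lions lemma, no Br\'ezis--Lieb splitting, no translated profiles are needed here.
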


\begin{proof}
Arguing as Lemma \ref{realnumber}, one deduces that $\{u_n\}$ is bounded in $X$.
Passing to a subsequence if necessary,
there is a $u\in X$ such that $u_n\rightharpoonup u$ in $X$
and $u_n\to u$ a.e. in $\R^N$.
To prove that $u\not=0$, let us suppose it by a contradiction
and assume that $u\equiv0$. Hence,
$$
\hat{d}+o_n(1)=I_\varepsilon(u_n)=I_\infty(u_n)
+\frac1p\int_{\R^N}[V(\varepsilon x)-V_\infty]|u_n|^pdx.
$$
Due to $(V_2)$, given an arbitrary $\epsilon>0$, there is an $R>0$ such that
$$
V(x)\geq V_\infty-\epsilon,\text{ for all }|x|\geq R
$$
which indicates that
$$
\hat{d}+o_n(1)\geq I_\infty(u_n)+\frac1p\int_{B_{R/\varepsilon}(0)}[V(\varepsilon x)-V_\infty]|u_n|^pdx
-\frac\epsilon p\int_{B^c_{R/\varepsilon}(0)}|u_n|^pdx.
$$
Since $\{u_n\}$ is bounded in $X$ and $u_n\to 0$ in $L^p(B_{R/\varepsilon}(0))$, one has that
$$
\hat{d}+o_n(1)\geq I_\infty(u_n)-C\epsilon\geq \mathcal{I}_{\infty,a}-C\epsilon
$$
for some $C>0$ independent of $\epsilon$. Let us tend $\epsilon\to0^+$,
then $\hat{d}\geq \mathcal{I}_{\infty,a}$ which is impossible due to the facts that
$\hat{d}<\frac{\mathcal{I}_{0,a}+\mathcal{I}_{\infty,a}}{2}$ and \eqref{5}. The proof is completed.
\end{proof}

\begin{lemma}\label{PSnontrivial}
Let $2\leq p<N$ and $a>\tilde{a}>0$. Suppose $\varepsilon\in(0,\varepsilon^*)$ to be fixed
and let $\{u_n\}\subset X$ be a $(PS)_{\hat{d}}$ sequence for $I_\varepsilon$ constrained to $S(a)$
with $\hat{d}<\frac{\mathcal{I}_{0,a}+\mathcal{I}_{\infty,a}}{2}$, namely
$$
I_\varepsilon(u_n)\to\hat{d}~\text{and}~\|I'_\varepsilon|_{S(a)}\|_{X^*}\to0~\text{as}~n\to\infty,
$$
then, up to a subsequence if necessary, there is a $u\in X$ such that $u_n\rightharpoonup u$ in $X$.
Moreover if $u_n\not\to u$ in $X$, then there exists a $\hat{\delta}>0$ independent of $\varepsilon\in(0,\varepsilon^*)$ such that,
by decreasing $\varepsilon^*$ if necessary, there holds
$$
\liminf_{n\to\infty}\int_{\R^N}|v_n|^pdx\geq\hat{\delta}.
$$
\end{lemma}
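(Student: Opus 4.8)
The plan is to couple a Br\'ezis--Lieb decomposition along $\{u_n\}$ with the variational structure of the constrained $(PS)$ sequence, and to close with a universal ``minimal mass'' estimate coming from the $L^p$ logarithmic Sobolev inequality. I would first note that, arguing as in Lemma~\ref{realnumber}, $I_\varepsilon$ is coercive on $S(a)$; hence $\{u_n\}$ is bounded in $X$ and, up to a subsequence, $u_n\rightharpoonup u$ in $X$, $u_n\to u$ a.e.\ and in $L^p_{\mathrm{loc}}$, and $\nabla u_n\to\nabla u$ a.e.\ (the last being standard for bounded $(PS)$ sequences of $p$-Laplacian type), while $u\neq0$ by the previous lemma. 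From $\|I'_\varepsilon|_{S(a)}\|_{X^*}\to0$ there are $\lambda_n\in\R$ with $I'_\varepsilon(u_n)-\lambda_n\Psi'(u_n)\to0$ in $X^*$; testing with $u_n$ and invoking $(\textbf{P}_1)$--$(\textbf{P}_2)$ shows $\{\lambda_n\}$ is bounded, so $\lambda_n\to\lambda$ along a subsequence, and passing to the limit in $\langle I'_\varepsilon(u_n)-\lambda_n\Psi'(u_n),\varphi\rangle\to0$ for fixed $\varphi\in X$ gives that $u$ weakly solves $I'_\varepsilon(u)=\lambda\Psi'(u)$; testing \emph{this} equation with $u$ and using \eqref{decomposition} produces the identity $I_\varepsilon(u)=\tfrac1p(1+\lambda)|u|_p^p$.

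Next I would set $v_n:=u_n-u$, so that along a subsequence $|v_n|_p\to d$ with $|u|_p^p=a^p-d^p$ and $0\le d<a$ (by Br\'ezis--Lieb and $u\neq0$). The Br\'ezis--Lieb lemma (for $|\nabla\cdot|^p$, and for $|\cdot|^p$ with the bounded weight $V(\varepsilon\cdot)+1$), together with \cite[Lemma 1.32]{Willem} for $F_2$ via $(\textbf{P}_2)$ and the computation of Lemma~\ref{BrezisLieb} for $F_1$, yields $I_\varepsilon(u_n)=I_\varepsilon(u)+I_\varepsilon(v_n)+o_n(1)$, $\langle I'_\varepsilon(u_n),v_n\rangle=\langle I'_\varepsilon(v_n),v_n\rangle+o_n(1)$ and $\langle\Psi'(u_n),v_n\rangle\to d^p$. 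If $d=0$ I would deduce $u_n\to u$ in $X$: subtracting $\langle I'_\varepsilon(u),v_n\rangle\to0$ from $\langle I'_\varepsilon(u_n),v_n\rangle=\lambda_n\langle\Psi'(u_n),v_n\rangle+o_n(1)\to0$ and discarding the $F_2$-term (which $\to0$ by \eqref{GN}, as $|v_n|_p\to0$) leaves a sum of three nonnegative quantities tending to $0$: the $p$-Laplacian term, controlling $\|\nabla v_n\|_p^p$ through $(|\xi|^{p-2}\xi-|\eta|^{p-2}\eta)\cdot(\xi-\eta)\ge c_p|\xi-\eta|^p$ (valid for $p\ge2$); the weighted $L^p$-term; and the convex $F_1$-term $\int_{\R^N}(F'_1(u_n)-F'_1(u))(u_n-u)\,dx$, which by the uniform convexity of $L^{F_1}$ available from Lemma~\ref{F1} forces $v_n\to0$ in $L^{F_1}$. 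Thus under the hypothesis $u_n\not\to u$ we must have $d>0$.

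Finally, to bound $d$ below uniformly, I would use that $v_n\rightharpoonup0$ in $X$ and $V(\varepsilon\cdot)\to V_\infty$ at infinity: splitting the potential term over $B_{R/\varepsilon}(0)$ and its complement (as in the previous lemma) and applying Lemma~\ref{monotone} gives $\liminf_n I_\varepsilon(v_n)\ge\tfrac{d^p}{a^p}\mathcal{I}_{\infty,a}$, whence, recalling $I_\varepsilon(v_n)\to\hat d-I_\varepsilon(u)$, $I_\varepsilon(u)=\tfrac1p(1+\lambda)(a^p-d^p)$ and $\hat d<0$ by \eqref{5}, one obtains that \emph{either} $d^p\ge a^p|\hat d|/|\mathcal{I}_{\infty,a}|$ \emph{or} $1+\lambda<0$. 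In the second case $\lambda<0$, so $\{v_n\}$ cannot vanish (otherwise the Vanishing lemma annihilates the $F_2$-term and $\langle I'_\varepsilon(v_n),v_n\rangle$ becomes a sum of nonnegative terms, contradicting $\langle I'_\varepsilon(v_n),v_n\rangle\to\lambda d^p<0$); hence there are $y_n$ with $|y_n|\to+\infty$ and $v_n(\cdot+y_n)\rightharpoonup\bar v\neq0$, and testing the $(PS)$ relation against $\varphi(\cdot-y_n)$ and letting $n\to\infty$ (the translated tail $u(\cdot+y_n)$ tending to $0$ locally, and $V(\varepsilon(\cdot+y_n))\to V_\infty$) shows $\bar v$ weakly solves $-\Delta_p\bar v+(V_\infty-\lambda)|\bar v|^{p-2}\bar v=|\bar v|^{p-2}\bar v\log|\bar v|^p$ in $\R^N$ with $V_\infty-\lambda>V_\infty+1>0$. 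Testing with $\bar v$ and inserting the sharp $L^p$ logarithmic Sobolev inequality gives $|\bar v|_p^p\ge e^{\,V_\infty+1-C_{N,p}}$ with $C_{N,p}$ depending only on $N,p$; since $|\bar v|_p^p\le\liminf_n|v_n(\cdot+y_n)|_p^p=\lim_n|v_n|_p^p=d^p$, taking $\hat\delta:=\min\{a^p|\hat d|/|\mathcal{I}_{\infty,a}|,\,e^{\,V_\infty+1-C_{N,p}}\}$ (and shrinking $\varepsilon^*$ if necessary) yields $\liminf_n|v_n|_p^p\ge\hat\delta$, with $\hat\delta$ independent of $\varepsilon$.

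I expect the hardest parts to be the last-paragraph extraction of the concentrating profile $\bar v$ and, above all, ruling out $\lambda\ge-1$, so that the logarithmic Sobolev bound can be applied with a constant not degenerating as $\varepsilon\to0^+$; the $d=0$ step is also delicate, as it rests on the $p$-Laplacian monotonicity inequality and on the uniform convexity of $L^{F_1}$, which is precisely where the restriction $p\ge2$ is genuinely used (cf.\ the open case $1<p<2$).
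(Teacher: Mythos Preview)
Your argument is considerably more elaborate than the paper's, and although the overall strategy can be made to work, it introduces heavy machinery (profile extraction, the $L^p$ logarithmic Sobolev inequality) that the paper avoids entirely. The paper's route is much shorter: once $I'_\varepsilon(v_n)v_n-\lambda\Psi'(v_n)v_n=o_n(1)$ is established (exactly as you do), the authors observe directly from
\[
\hat d=\lim_{n\to\infty}\Big[I_\varepsilon(u_n)-\tfrac1p\big(I'_\varepsilon(u_n)u_n-\lambda\Psi'(u_n)u_n\big)\Big]\ge \tfrac{\lambda}{p}a^p
\]
that $\lambda\le\lambda^*:=p\hat d/a^p<0$ with $\lambda^*$ independent of $\varepsilon$; then, using $F_1'(s)s\ge0$, $(\textbf{P}_2)$ and $V_0\ge-1$, the relation $I'_\varepsilon(v_n)v_n-\lambda\Psi'(v_n)v_n=o_n(1)$ yields $\|v_n\|_{W^{1,p}}^p\le C_1|v_n|_{\tilde q}^{\tilde q}+o_n(1)\le C_2\|v_n\|_{W^{1,p}}^{\tilde q}$; if $v_n\not\to0$ in $X$ one checks $\|v_n\|_{W^{1,p}}\ge C_3$ (otherwise $F_2'(v_n)v_n\to0$, hence $F_1'(v_n)v_n\to0$, hence $F_1(v_n)\to0$ via $lF_1(s)\le F_1'(s)s$ from Lemma~\ref{F1}, forcing $v_n\to0$ in $X$), so $|v_n|_{\tilde q}$ is bounded below, and Gagliardo--Nirenberg with bounded $|\nabla v_n|_p$ gives the uniform lower bound on $|v_n|_p$. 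No profile, no log-Sobolev.

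Two remarks on your version. First, your dichotomy is degenerate: since $\hat d<\tfrac12(\mathcal I_{0,a}+\mathcal I_{\infty,a})<\mathcal I_{\infty,a}<0$ one has $|\hat d|>|\mathcal I_{\infty,a}|$, so your ``first branch'' bound $d^p\ge a^p|\hat d|/|\mathcal I_{\infty,a}|>a^p$ is impossible (recall $d<a$ because $u\neq0$); thus you are always in the case $1+\lambda<0$ and the first branch should simply be discarded. Second, your $d=0\Rightarrow u_n\to u$ step invokes the uniform convexity of $L^{F_1}$, which is true but not stated in the paper; the paper's alternative---bounding $F_1$ by $F_1's$ via the $\Delta_2$ relation---is lighter and stays within the established toolbox. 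In short, your proof is essentially correct but overbuilt; the paper reaches the same $\hat\delta$ with a two-line bound on $\lambda$ and a Gagliardo--Nirenberg interpolation, bypassing both the concentration step and the logarithmic Sobolev inequality.
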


\begin{proof}
We define the functional $\Psi:X\to\R$ by
$$
\Psi(u)=\frac1p\int_{\mathbb{B}^N}|u|^pdx,~u \in X,
$$
it follows that $S(a)=\Psi^{-1}(\{a^p/p\})$. Hence, adopting \cite[Proposition 5.12]{Willem},
there exists $\{\lambda_n\}\subset\R$ such that
$$
\|I'_\varepsilon(u_n)-\lambda_n\Psi'(u_n)\|_{X^*}\to0~\text{as}~n\to\infty.
$$
Since $\{u_n\}$ is bounded in $X$, we easily get that $\{\lambda_n\}$ is bounded in $\R$.
Passing to a subsequence if necessary, there is a $\lambda\in\R$ that may depend on $\varepsilon$
such that $\lambda_n\to \lambda$ and so
$$
\|I'_\varepsilon(u_n)-\lambda \Psi'(u_n)\|_{X^*}\to0~\text{as}~n\to\infty
$$
which immediately shows us that
$$
I'_\varepsilon(u )-\lambda \Psi'(u )=0~\text{in}~X^*.
$$
Combining Lemma \ref{BrezisLieb} and the Br\'{e}zis-Lieb lemma, one has that
\begin{align*}
I'_\varepsilon(v_n)v_n-\lambda \Psi'(v_n)v_n &=I'_\varepsilon(u_n)u_n-I'_\varepsilon(u)u -\lambda \Psi'( u_n) u_n
+\lambda \Psi'( u ) u +o_n(1) \\
    &  =I'_\varepsilon(u_n)u_n-\lambda_n \Psi'( u_n) u_n-I'_\varepsilon(u)u
+\lambda \Psi'( u ) u +o_n(1)\\
&=o_n(1)
\end{align*}
jointly with $F_1'(s)s\geq0$ for all $s\in\R$ in property-$(\textbf{P}_1)$ and property-$(\textbf{P}_2)$ implies that
$$
\int_{\R^N}[|\nabla v_n|^p+(V(\varepsilon x)+1-\lambda)|v_n|^p]dx\leq
 C_{\tilde{q}}\int_{\R^N}|v_n|^{\tilde{q}}dx+o_n(1)
$$
for some $\tilde{q}\in(p,p^*)$. We claim that there is a $\lambda^*<0$ independent of $\varepsilon\in(0,\varepsilon^*)$ such that
$$
\lambda\leq \lambda^*,~\forall \varepsilon\in(0,\varepsilon^*).
$$
Indeed, due to $\{u_n\}\subset S(a)$, we find that
$$
\hat{d}=\lim _{n\to\infty}I_\varepsilon(u_n)=\lim _{n\to\infty}
\left[I_\varepsilon(u_n)-\frac1p\bigg(I'_\varepsilon(u_n)u_n-\lambda\Psi'(u_n)u_n\bigg)\right]
\geq \frac\lambda pa^p
$$
showing the claim. As a consequence, owing to $V_0\geq-1$ by $(\hat{V}_1)$, there holds
$$
\int_{\R^N}(|\nabla v_n|^p -\lambda^* |v_n|^p)dx\leq
 C_{\tilde{q}}\int_{\R^N}|v_n|^{\tilde{q}}dx+o_n(1),~\forall \varepsilon\in(0,\varepsilon^*).
$$
It follows from the continuous imbedding $W^{1,p}(\R^N)\hookrightarrow L^q(\R^N)$ with $q\in(p,p^*)$ that
$$
\|v_n\|_{W^{1,p}(\R^N)}^p\leq C_1|v_n|^{\tilde{q}}_{\tilde{q}}+o_n(1)\leq C_2\|v_n\|_{W^{1,p}(\R^N)}^{\tilde{q}},~\forall \varepsilon\in(0,\varepsilon^*),
$$
where $C_1,C_2>0$ are independent of $\varepsilon\in(0,\varepsilon^*)$.
Because $v_n\not\to0$ in $X$ and $\tilde{q}>p$, without loss of generality, we can suppose that
\begin{equation}\label{7}
 \liminf_{n\to\infty}\|v_n\|_{W^{1,p}(\R^N)}\geq C_3,~\forall \varepsilon\in(0,\varepsilon^*),
\end{equation}
where $C_3>0$ is independent of $\varepsilon\in(0,\varepsilon^*)$.
Otherwise, if $v_n\to0$ in $W^{1,p}(\R^N)$, then $F_2'(v_n)v_n\to0$
 in $L^1(\R^N)$ by property-$(\textbf{P}_2)$ which together with $I'_\varepsilon(v_n)v_n-\lambda \Psi'(v_n)v_n=o_n(1)$
 and $F_2'(s)s\geq0$ for all $s\in\R$ by property-$(\textbf{P}_1)$ yields that $F_1(v_n)\to0$
 in $L^1(\R^N)$. Recalling Lemma \ref{F1}, we can conclude that $v_n\to0$ in $L^{F_1}(\R^N)$
 and so $v_n\to0$ in $X$, a contradiction and \eqref{7} follows. Hence,
 $$
  \liminf_{n\to\infty} |v_n |_{\tilde{q}}^{\tilde{q}}\geq C_2^{-1}C_3^p,~\forall \varepsilon\in(0,\varepsilon^*).
 $$
 Adopting \eqref{GN} and $|\nabla v_n|_p^p$ is bounded, we can derive the proof of this lemma.
 \end{proof}

\begin{theorem}\label{PScondition}
Let $2\leq p<N$ and $a>\tilde{a}>0$. If $\varepsilon\in(0,\varepsilon^*)$ is fixed,
then the functional $I_\varepsilon|_{S(a)}$ satisfies the $(PS)_{\hat{d}}$ condition
with $\hat{d}<\mathcal{I}_{0,a}+\Upsilon$, where $0<\Upsilon\leq\min\{\frac12,\frac{\hat{\delta}}{a^p}\}(\mathcal{I}_{\infty,a}-\mathcal{I}_{0,a})$
and $\hat{\delta}>0$ is determined by Lemma \ref{PSnontrivial}.
\end{theorem}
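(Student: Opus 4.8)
The plan is to run a concentration--compactness argument on a $(PS)_{\hat d}$ sequence and to cash in the strict gap $\mathcal{I}_{0,a}<\mathcal{I}_{\infty,a}$ recorded in \eqref{5} together with the choice of $\Upsilon$. Fix a sequence $\{u_n\}\subset S(a)$ which is $(PS)_{\hat d}$ for $I_\varepsilon$ constrained to $S(a)$. Since $\Upsilon\le\frac12(\mathcal{I}_{\infty,a}-\mathcal{I}_{0,a})$ we have $\hat d<\mathcal{I}_{0,a}+\Upsilon\le\frac{\mathcal{I}_{0,a}+\mathcal{I}_{\infty,a}}{2}$, so Lemma \ref{limitrealnumber} applies and, up to a subsequence, $u_n\rightharpoonup u$ in $X$ with $u\ne0$; moreover, by Lemma \ref{PSnontrivial} there is $\lambda\in\R$ with $I'_\varepsilon(u)=\lambda\Psi'(u)$ in $X^*$. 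It therefore suffices to prove that $u_n\to u$ strongly in $X$, for then $u\in S(a)$, $I_\varepsilon(u)=\hat d$, and the couple $(u,\lambda)$ closes the verification of the $(PS)_{\hat d}$ condition.

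Suppose, for contradiction, that $u_n\not\to u$ in $X$, and set $v_n=u_n-u\rightharpoonup0$ in $X$. By Lemma \ref{PSnontrivial} one has $\liminf_{n}|v_n|_p^p\ge\hat\delta>0$. Using the standard fact that a constrained $(PS)$ sequence of the quasilinear functional $I_\varepsilon$ admits a subsequence with $\nabla u_n\to\nabla u$ a.e.\ in $\R^N$, the Br\'ezis--Lieb lemma for the terms $\int|\nabla\cdot|^p$ and $\int|\cdot|^p$, Lemma \ref{BrezisLieb} for the logarithmic term, and the $L^p$-subcritical growth of $F_2$ from $(\textbf{P}_2)$, I would obtain the splitting
\[
\hat d+o_n(1)=I_\varepsilon(u_n)=I_\varepsilon(u)+I_\varepsilon(v_n)+o_n(1),\qquad a^p=|u|_p^p+|v_n|_p^p+o_n(1).
\]
Writing $b=|u|_p$ and, along a further subsequence, $|v_n|_p\to d$, one gets $b\in(0,a)$, $a^p=b^p+d^p$, and $d^p\ge\hat\delta$.

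The core of the argument is to bound both pieces below by the autonomous minima. Since $V(\varepsilon x)\ge V_0$, we have $I_\varepsilon(u)\ge I_0(u)\ge\mathcal{I}_{0,b}$. For the defect, continuity of $V$ together with $\lim_{|x|\to\infty}V(x)=V_\infty$ gives, for every $\eta>0$, a radius $R>0$ with $V(x)\ge V_\infty-\eta$ for $|x|\ge R$; since $\varepsilon$ is fixed, $v_n\to0$ in $L^p(B_{R/\varepsilon})$ by the Rellich theorem, whence
\[
I_\varepsilon(v_n)-I_\infty(v_n)=\frac1p\int_{\R^N}\bigl(V(\varepsilon x)-V_\infty\bigr)|v_n|^p\,dx\ge-\frac{\eta}{p}\,a^p+o_n(1),
\]
so $I_\varepsilon(v_n)\ge I_\infty(v_n)-\tfrac{\eta a^p}{p}+o_n(1)\ge\mathcal{I}_{\infty,|v_n|_p}-\tfrac{\eta a^p}{p}+o_n(1)$. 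Plugging the two bounds into the splitting, invoking Lemma \ref{monotone} in the forms $\mathcal{I}_{0,b}\ge\frac{b^p}{a^p}\mathcal{I}_{0,a}$ and $\mathcal{I}_{\infty,|v_n|_p}\ge\frac{|v_n|_p^p}{a^p}\mathcal{I}_{\infty,a}$ (legitimate since $b,|v_n|_p\in(0,a)$ for $n$ large), and letting first $n\to\infty$ and then $\eta\to0^+$, I arrive at
\[
\hat d\ge\frac{b^p}{a^p}\mathcal{I}_{0,a}+\frac{d^p}{a^p}\mathcal{I}_{\infty,a}=\mathcal{I}_{0,a}+\frac{d^p}{a^p}\bigl(\mathcal{I}_{\infty,a}-\mathcal{I}_{0,a}\bigr)\ge\mathcal{I}_{0,a}+\frac{\hat\delta}{a^p}\bigl(\mathcal{I}_{\infty,a}-\mathcal{I}_{0,a}\bigr)\ge\mathcal{I}_{0,a}+\Upsilon,
\]
where the last steps use $b^p+d^p=a^p$, $d^p\ge\hat\delta$, the strict gap $\mathcal{I}_{\infty,a}-\mathcal{I}_{0,a}>0$ from \eqref{5}, and $\Upsilon\le\frac{\hat\delta}{a^p}(\mathcal{I}_{\infty,a}-\mathcal{I}_{0,a})$. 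This contradicts $\hat d<\mathcal{I}_{0,a}+\Upsilon$, hence $u_n\to u$ in $X$ and the $(PS)_{\hat d}$ condition holds.

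The step I expect to be the main obstacle is the legitimacy of the compactness splitting $I_\varepsilon(u_n)=I_\varepsilon(u)+I_\varepsilon(v_n)+o_n(1)$ for the non-homogeneous $p$-Laplacian functional --- concretely, securing the a.e.\ convergence of the gradients along a constrained $(PS)$ sequence in the space $X$; a companion delicate point is the control of $\int(V(\varepsilon x)-V_\infty)|v_n|^p\,dx$, which is precisely what forces the escaping mass to pay the larger energy cost $\mathcal{I}_{\infty,a}$ rather than merely $\mathcal{I}_{0,a}$, and where the gap in \eqref{5} and the definition of $\Upsilon$ enter decisively.
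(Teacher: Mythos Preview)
Your argument is correct and follows essentially the same route as the paper: energy splitting via Lemma~\ref{BrezisLieb} and the Br\'ezis--Lieb lemma, the lower bound $I_\varepsilon(u)\ge\mathcal{I}_{0,b}$ from $V\ge V_0$, the asymptotic comparison $I_\varepsilon(v_n)\ge\mathcal{I}_{\infty,d_n}+o_n(1)$ via the $V(x)\ge V_\infty-\eta$ trick from Lemma~\ref{limitrealnumber}, and then Lemma~\ref{monotone} to reduce to $\mathcal{I}_{0,a}$ and $\mathcal{I}_{\infty,a}$ and force a contradiction with the choice of $\Upsilon$. The technical concern you flag about a.e.\ convergence of gradients (needed for the $|\nabla\cdot|^p$ part of the splitting) is handled at the same level of detail in the paper, which simply invokes the Br\'ezis--Lieb lemma; this step is standard for constrained $(PS)$ sequences of $p$-Laplacian type functionals.
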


\begin{proof}
Let $\{u_n\}\subset X$ be a $(PS)_d$ sequence for $I_\varepsilon|_{S(a)}$
and define the functional $\Psi:X\to\R$ by
$$
\Psi(u)=\frac1p\int_{\mathbb{B}^N}|u|^pdx,~u \in X,
$$
which reveals that $S(a)=\Psi^{-1}(\{a^p/p\})$. Thus, adopting \cite[Proposition 5.12]{Willem},
there exists $\{\lambda_n\}\subset\R$ such that
$$
\|I'_\varepsilon(u_n)-\lambda_n\Psi'(u_n)\|_{X^*}\to0~\text{as}~n\to\infty.
$$
According to Lemma \ref{realnumber2}, $\{u_n\}$ is bounded in $X$
and then there is a $u\in X$ such that $u_n\rightharpoonup u$ in $X$
and $u_n\to u$ a.e. in $\R^N$ along a subsequence.
Denoting $v_n\triangleq u_n-u$, if $v_n\not\to0$ in $X$, then
 Lemma \ref{PSnontrivial} ensures that
\begin{equation}\label{6}
\liminf_{n\to\infty}\int_{\R^N}|v_n|^pdx\geq\hat{\delta}.
\end{equation}
Let $d_n=|v_n|_p$ and $|u|_p= b$, so we can assume that $|v_n|_p\to d> 0$ and $b>0$
by \eqref{6} and Lemma \ref{limitrealnumber}, respectively.
Moreover, it holds that $a^p=b^p+d^p$ by the Br\'{e}zis-Lieb lemma.
Via exploiting a similar argument explored in Lemma \ref{limitrealnumber},
by means of $v_n\rightharpoonup 0$ in $X$, one can show that
$I_\varepsilon(v_n)\geq \mathcal{I}_{\infty,d_n} +o_n(1)$ which together
with Lemma \ref{BrezisLieb} and $V(x)\geq V_0$ for all $x\in\R^N$  by $(\hat{V}_1)$ gives that
$$
\hat{d}+o_n(1) =I_\varepsilon(u_n)=I_\varepsilon(v_n)+I_\varepsilon(u )+o_n(1)\geq \mathcal{I}_{\infty,d_n}
  +\mathcal{I}_{0,b}+o_n(1).
$$
Since $d_n\in (0,a)$ for $n\in \mathbb{N}$ large enough and $b\in(0,a)$,
we argue as Lemma \ref{monotone} to derive
$$
\hat{d}+o_n(1)\geq \frac{d_n^p}{a^p}\mathcal{I}_{\infty,a}+\frac{b^p}{a^p}\mathcal{I}_{0,a}
$$
Letting $n\to\infty$ and recalling $a^p=b^p+d^p$, one has that
$$
\Upsilon>\frac{\hat{\delta}}{a^p}(\mathcal{I}_{\infty,a}-\mathcal{I}_{0,a})
$$
violating to the definition of $\Upsilon$. So, we must conclude that $v_n\to 0$ in $X$,
that is $u_n\to u$ in $X$. One further obtains that $u\in S(a)$ and
$$
-  \Delta_p u+V(\varepsilon x)|v|^{p-2}u=\lambda |u|^{p-2}v+|u|^{p-2}v\log|v|^p~\text{in}~\R^N,
$$
where $\lambda\leq\lambda^*<0$ comes from Lemma \ref{PSnontrivial}.
The proof is completed.
\end{proof}

In what follows, according to $(V_2)$,
 we fix some sufficiently small $\rho_0,r_0>0$ to satisfy
 \begin{itemize}
   \item $\overline{B_{\rho_{0}}(x^{i})}\cap\overline{B_{\rho_{0}}(x^{j})}=\emptyset$ for $i\neq j$ and $i,j\in\{1,\ldots,\ell\}$;
   \item $\bigcup\limits_{i=1}B_{\rho_0}(x^i)\subset B_{r_0}(0)$;
   \item $K_{\frac{\rho_0}2}=\bigcup\limits_{i=1}^{\ell}\overline{B_{\frac{\rho_0}2}(x^i)}$.
 \end{itemize}
Define the function $Q_\varepsilon:X\backslash\{0\}\to\R$ by
$$
Q_\varepsilon(u)=\frac{\displaystyle\int_{\mathbb{R}^N}\chi(\varepsilon x)|u|^pdx}{\displaystyle\int_{\mathbb{R}^N}|u|^pdx},
$$
where $\chi:\mathbb{R}^N\to\mathbb{R}^N$ is given by
\[
\chi(x)=\begin{cases}x,&\text{if}~|x|\le r_0,\\\frac{r_0x}{|x|},&\text{if}~|x|>r_0.\end{cases}
\]

With Theorem \ref{PScondition} in hands, we now focus on establishing the
existence of $(PS)$ sequences for the variational functional $I_\varepsilon$ constrained on $S(a)$.

\begin{lemma}\label{PS1}
Let $2\leq p<N$ and $a>\tilde{a}>0$. Suppose $\varepsilon\in(0,\varepsilon^*)$ to be fixed, decreasing $\varepsilon^*>0$
if necessary,
there is $\delta^*>0$ such that if $u\in S(a)$ and $I_{\varepsilon}(u)\leq \mathcal{I}_{0,a}+\delta^*$,
then
$$
Q_\varepsilon(u)\in K_{\frac{\rho_0}2},~\forall \varepsilon\in(0,\varepsilon^*).
$$
\end{lemma}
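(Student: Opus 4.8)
The plan is to argue by contradiction. If the claim failed, then choosing $\delta^\ast=\varepsilon^\ast=1/n$ would produce sequences $\varepsilon_n\to0^+$ and $u_n\in S(a)$ with $I_{\varepsilon_n}(u_n)\le\mathcal{I}_{0,a}+1/n$ and $Q_{\varepsilon_n}(u_n)\notin K_{\rho_0/2}$. Since $V(\varepsilon_n x)\ge V_0$ gives $I_0(u_n)\le I_{\varepsilon_n}(u_n)$, while $I_0(u_n)\ge\mathcal{I}_{0,a}$ because $u_n\in S(a)$, we obtain $I_0(u_n)\to\mathcal{I}_{0,a}$, so $\{u_n\}$ is a minimizing sequence for $I_0$ on $S(a)$. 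As $a>\tilde a$, the compactness Theorem \ref{compactness} (applied with $\mu=V_0\in[-1,+\infty)$) supplies a sequence $\{y_n\}\subset\R^N$ (possibly $y_n\equiv0$) and a function $v\in S(a)$ with $I_0(v)=\mathcal{I}_{0,a}$ such that $w_n:=u_n(\cdot+y_n)\to v$ strongly in $X$; in particular $\nabla w_n\to\nabla v$ in $L^p(\R^N)$, $|w_n|^p\to|v|^p$ in $L^1(\R^N)$, and $\int_{\R^N}F_i(w_n)\,dx\to\int_{\R^N}F_i(v)\,dx$ for $i=1,2$ by property $(\mathbf{P}_2)$ and the continuity of $u\mapsto\int_{\R^N}F_1(u)\,dx$ on $L^{F_1}(\R^N)$.

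The next step is to show $\{\varepsilon_n y_n\}$ is bounded and to locate its limit. After the change of variable $x\mapsto x+y_n$,
\[
I_{\varepsilon_n}(u_n)=\frac1p\int_{\R^N}\big[|\nabla w_n|^p+(V(\varepsilon_n x+\varepsilon_n y_n)+1)|w_n|^p\big]dx+\int_{\R^N}F_1(w_n)\,dx-\int_{\R^N}F_2(w_n)\,dx .
\]
If $|\varepsilon_n y_n|\to+\infty$ along a subsequence, then $V(\varepsilon_n x+\varepsilon_n y_n)\to V_\infty$ for each fixed $x$; since $V$ is bounded by $(V_1)$, a generalized dominated convergence argument using $|w_n|^p\to|v|^p$ in $L^1(\R^N)$ yields $I_{\varepsilon_n}(u_n)\to I_\infty(v)\ge\mathcal{I}_{\infty,a}$, contradicting $I_{\varepsilon_n}(u_n)\to\mathcal{I}_{0,a}<\mathcal{I}_{\infty,a}$ from \eqref{5}. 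Hence $\{\varepsilon_n y_n\}$ is bounded and, up to a subsequence, $\varepsilon_n y_n\to y_0\in\R^N$, so that $V(\varepsilon_n x+\varepsilon_n y_n)\to V(y_0)$ pointwise. Passing to the limit term by term in the displayed identity gives
\[
\mathcal{I}_{0,a}=\lim_{n\to\infty}I_{\varepsilon_n}(u_n)=\frac1p\int_{\R^N}\big[|\nabla v|^p+(V(y_0)+1)|v|^p\big]dx+\int_{\R^N}F_1(v)\,dx-\int_{\R^N}F_2(v)\,dx\ \ge\ \mathcal{I}_{V(y_0),a}\ \ge\ \mathcal{I}_{0,a}.
\]
Thus $\mathcal{I}_{V(y_0),a}=\mathcal{I}_{0,a}$, and since $V(y_0)\ge V_0$, the strict monotonicity in Corollary \ref{limitcorollary} forces $V(y_0)=V_0$, i.e. $y_0=x^i$ for some $i\in\{1,\dots,\ell\}$ by $(V_2)$.

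Finally, I would pass to the limit in the barycenter. Using $\int_{\R^N}|u_n|^p\,dx=a^p$ and the same change of variable,
\[
Q_{\varepsilon_n}(u_n)=\frac1{a^p}\int_{\R^N}\chi(\varepsilon_n x+\varepsilon_n y_n)\,|w_n(x)|^p\,dx .
\]
Since $\chi$ is continuous with $|\chi|\le r_0$, and $|x^i|<r_0$ by the choice of $r_0$, we have $\chi(\varepsilon_n x+\varepsilon_n y_n)\to\chi(y_0)=x^i$ pointwise and uniformly bounded by $r_0$; combining with $|w_n|^p\to|v|^p$ in $L^1(\R^N)$ and the generalized dominated convergence theorem gives $Q_{\varepsilon_n}(u_n)\to x^i$. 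As $x^i$ lies in the interior of $\overline{B_{\rho_0/2}(x^i)}\subset K_{\rho_0/2}$, it follows that $Q_{\varepsilon_n}(u_n)\in K_{\rho_0/2}$ for $n$ large, contradicting $Q_{\varepsilon_n}(u_n)\notin K_{\rho_0/2}$. I expect the main obstacle to be the middle step: excluding the escape of mass (the case $|\varepsilon_n y_n|\to+\infty$) and pinning $y_0$ to a minimum point of $V$; both rely decisively on the strict gap $\mathcal{I}_{0,a}<\mathcal{I}_{\infty,a}$ in \eqref{5} and on the strict monotonicity $\mu\mapsto\mathcal{I}_{\mu,a}$ from Corollary \ref{limitcorollary}, while the term-by-term passages to the limit are routine consequences of the strong convergence $w_n\to v$ in $X$ together with the boundedness and continuity of $V$.
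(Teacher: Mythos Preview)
Your proposal is correct and follows essentially the same route as the paper's proof: contradiction, identification of $\{u_n\}$ as a minimizing sequence for $I_0$, application of the compactness Theorem~\ref{compactness}, exclusion of the escape case $|\varepsilon_n y_n|\to+\infty$ via the strict gap $\mathcal{I}_{0,a}<\mathcal{I}_{\infty,a}$, identification $V(y_0)=V_0$ via Corollary~\ref{limitcorollary}, and finally convergence of the barycenter $Q_{\varepsilon_n}(u_n)\to x^i$. The only presentational difference is that the paper treats separately the two alternatives of Theorem~\ref{compactness} (namely $u_n\to u$ in $X$, corresponding to $y_n\equiv0$, versus $|y_n|\to+\infty$), whereas you unify them by allowing $y_n\equiv0$; since $x^1=0$ by $(V_2)$, your unified analysis of $\varepsilon_n y_n\to y_0$ with $V(y_0)=V_0$ covers the paper's Case~i) as the special instance $y_0=0=x^1$.
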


\begin{proof}
Suppose, by contradiction, that for all $n\in \mathbb{N}$, there exist $\varepsilon_n\to0$ and $\{u_n\}\subset S(a)$ such that
$$\mathcal{I}_{0,a}\leq I_{\varepsilon_n}(u_n)\leq \mathcal{I}_{0,a}+\frac1n,$$
and
$$
Q_{\varepsilon_n}(u_n)\not\in K_{\frac{\rho_0}2}.
$$
Obviously, one has that
$$\mathcal{I}_{0,a}\leq I_{0}(u_n)
\leq I_{\varepsilon_n}(u_n)\leq \mathcal{I}_{0,a}+\frac1n$$
showing that $\{u_n\}\subset S(a)$  is a minimizing sequence with respect to $\mathcal{I}_{0,a}$.
Thanks to Theorem \ref{compactness}, passing to a subsequence if necessary,
one of the following alternatives holds true

i) There is a function $u\in S(a)$ such that $u_n\to u$ in $X$ as $n\to\infty$;

ii) There exists a sequence $\{y_n\}\subset\R^N$ with $|y_n|\to+\infty$ such that $v_n=u_n(\cdot+y_n)\to v$ in $X$
for some $v\in S(a)$.

\underline{\textbf{We claim that i) cannot occur.}} Otherwise, adopting the definition of $\chi$, one has
$$
\lim_{n\to\infty}\int_{\mathbb{R}^N}\chi(\varepsilon_n x)|u_n|^pdx
=\lim_{n\to\infty}\int_{\mathbb{R}^N} \chi(0) |u|^pdx =0.
$$
From which, we conclude that $Q_{\varepsilon_n}(u)\in K_{\frac{\rho_0}2}$ for some sufficiently large $n\in \mathbb{N}$.
It is impossible and so the claim follows.
When ii) occurs, passing to a subsequence if necessary, we shall contemplate the following
two cases:

\underline{\textbf{ii)-(1). $|\varepsilon_ny_n|\to+\infty$ as $n\to\infty$.}}\\
In this case, as a consequence of $v_n\to v$ in $X$, there holds
 \begin{align*}
   I_{\varepsilon_n}(u_n) & =
\frac1p\int_{\mathbb{R}^N}\left[|\nabla u_n|^{p } +(V(\varepsilon_n x)+1)|u_n|^{p}\right]dx
+\int_{\mathbb{R}^N}F_1 (u_n)dx-\int_{\mathbb{R}^N}F_2 (u_n) dx \\
    &=
\frac1p\int_{\mathbb{R}^N}\left[|\nabla v _n|^{p } +(V(\varepsilon_n x+\varepsilon_n y_n)+1)|v_n|^{p}\right]dx
+\int_{\mathbb{R}^N}F_1 (v_n)dx-\int_{\mathbb{R}^N}F_2 (v_n) dx\\
&\to I_{\infty }(v).
 \end{align*}
 Since $I_{\varepsilon_n}(u_n)\leq \mathcal{I}_{0,a}+\frac1n$, we arrive at the inequality below
 $$
 \mathcal{I}_{0,a}\geq I_{\infty }(v)\geq \mathcal{I}_{\infty,a}
 $$
 which contradicts with \eqref{5}.

\underline{\textbf{ii)-(2). $\varepsilon_ny_n\to y$ for some $y\in\R^N$ as $n\to\infty$.}}\\
In this case, a similar argument using the above calculations indicates that
$$
\mathcal{I}_{V(y),a}\leq \mathcal{I}_{ 0,a}.
$$
If $V(y)>V_0$, we follow a very similar approach explored in the proof of Corollary \ref{limitcorollary}
to deduce that $\mathcal{I}_{V(y),a}> \mathcal{I}_{ 0,a}$ which is absurd. Thereby, $V(y)=V_0$
and $y=x^i$ for some $i\in\{0,1,\cdots,l\}$.
Then one derives that
$$
\lim_{n\to\infty}\int_{\mathbb{R}^N}\chi(\varepsilon_n x)|u_n|^pdx
=\lim_{n\to\infty}\int_{\mathbb{R}^N} \chi(\varepsilon_n x+\varepsilon_n y_n) |v_n|^pdx =x^i\int_{\mathbb{R}^N} |v|^pdx
$$
which reveals that $\lim\limits_{n\to\infty}Q_{\varepsilon_n}(u_n)=x^i\in K_{\frac{\rho_0}{2}}$. From which,
we obtain that $Q_{\varepsilon_n}(u_n)\in K_{\frac{\rho_0}{2}}$ for some sufficiently large $n\in N$, a contradiction.
The proof is completed.
\end{proof}

In the sequel, for  $j\in\{1,\cdots,l\}$, we need the following notations
$$
\begin{aligned}
&\bullet\theta_{\varepsilon}^{j}=\{u\in S(a):|Q_{\varepsilon}(u)-x^{j}|<\rho_{0}\}, \\
&\bullet\partial\theta_{\varepsilon}^{j}=\{u\in S(a):|Q_{\varepsilon}(u)-x^{j}|=\rho_{0}\}, \\
&\bullet\beta_\varepsilon^j=\operatorname*{inf}_{u\in\theta_\varepsilon^j}I_\varepsilon(u) ~
  \mathrm{and} ~ {\tilde{\beta}_\varepsilon^j}=\inf_{u\in\partial\theta_\varepsilon^j}I_\varepsilon(u).
\end{aligned}
$$

\begin{lemma}\label{PS2}
Let $2\leq p<N$ and $a>\tilde{a}>0$. Suppose $\varepsilon\in(0,\varepsilon^*)$ to be fixed, decreasing $\varepsilon^*>0$
if necessary, for the constant $\Upsilon>0$ in Theorem \ref{PScondition},
there holds
$$
\beta_\varepsilon^j<\mathcal{I}_{0,a}+\Upsilon
~
  \mathrm{and} ~
\beta_\varepsilon^j<\tilde{\beta}_\varepsilon^j,~\forall \varepsilon\in(0,\varepsilon^*).
$$
\end{lemma}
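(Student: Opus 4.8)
The plan is to establish the two inequalities in Lemma \ref{PS2} separately, exploiting Lemma \ref{PS1}, Lemma \ref{limitrealnumber} (the $\limsup$ estimate), and a careful choice of test functions concentrated at the points $x^j$.

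\textbf{Step 1: upper bound $\beta_\varepsilon^j<\mathcal{I}_{0,a}+\Upsilon$.} First I would fix $j\in\{1,\dots,l\}$ and invoke Theorem \ref{limittheorem} to pick $u_0\in S(a)$ with $I_0(u_0)=\mathcal{I}_{0,a}$. Using density one may assume $u_0$ has compact support (or decays fast), and then define the shifted test function $w_\varepsilon(x)=u_0\!\left(x-\frac{x^j}{\varepsilon}\right)$, so that $w_\varepsilon\in S(a)$. Computing $I_\varepsilon(w_\varepsilon)$ and changing variables, one gets
\[
I_\varepsilon(w_\varepsilon)=\frac1p\int_{\R^N}\Big[|\nabla u_0|^p+(V(\varepsilon x+x^j)+1)|u_0|^p\Big]dx+\int_{\R^N}F_1(u_0)dx-\int_{\R^N}F_2(u_0)dx,
\]
and since $V$ is continuous with $V(x^j)=V_0$, Lebesgue's theorem yields $I_\varepsilon(w_\varepsilon)\to I_0(u_0)=\mathcal{I}_{0,a}$ as $\varepsilon\to0^+$. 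Hence for $\varepsilon$ small, $I_\varepsilon(w_\varepsilon)<\mathcal{I}_{0,a}+\Upsilon$. It then remains to check $w_\varepsilon\in\theta_\varepsilon^j$, i.e. that $|Q_\varepsilon(w_\varepsilon)-x^j|<\rho_0$; but a change of variables gives $Q_\varepsilon(w_\varepsilon)=\frac{1}{a^p}\int_{\R^N}\chi(\varepsilon x+x^j)|u_0|^p dx\to \chi(x^j)\frac{\|u_0\|_p^p}{a^p}=x^j$ (using $|x^j|\le r_0$, so $\chi(x^j)=x^j$), again by dominated convergence. Thus $w_\varepsilon\in\theta_\varepsilon^j$ for $\varepsilon$ small, giving $\beta_\varepsilon^j\le I_\varepsilon(w_\varepsilon)<\mathcal{I}_{0,a}+\Upsilon$.

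\textbf{Step 2: the strict inequality $\beta_\varepsilon^j<\tilde\beta_\varepsilon^j$.} Here I would argue by contradiction combined with Lemma \ref{PS1}. Recall $\Upsilon\le\frac12(\mathcal{I}_{\infty,a}-\mathcal{I}_{0,a})$, hence $\mathcal{I}_{0,a}+\Upsilon\le\frac{\mathcal{I}_{0,a}+\mathcal{I}_{\infty,a}}{2}$; after possibly shrinking $\varepsilon^*$ we may further arrange $\Upsilon<\delta^*$ where $\delta^*$ is the constant from Lemma \ref{PS1}. Now suppose $u\in\partial\theta_\varepsilon^j$, so $|Q_\varepsilon(u)-x^j|=\rho_0$. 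Since the balls $B_{\rho_0}(x^i)$ are pairwise disjoint and $\bigcup_i B_{\rho_0}(x^i)\subset B_{r_0}(0)$, a point at distance exactly $\rho_0$ from $x^j$ cannot lie in $K_{\rho_0/2}=\bigcup_i\overline{B_{\rho_0/2}(x^i)}$, i.e. $Q_\varepsilon(u)\notin K_{\rho_0/2}$. By the contrapositive of Lemma \ref{PS1} this forces $I_\varepsilon(u)>\mathcal{I}_{0,a}+\delta^*\ge\mathcal{I}_{0,a}+\Upsilon$. Taking the infimum over $u\in\partial\theta_\varepsilon^j$ gives $\tilde\beta_\varepsilon^j\ge\mathcal{I}_{0,a}+\delta^*>\mathcal{I}_{0,a}+\Upsilon>\beta_\varepsilon^j$ by Step 1, which is the desired strict inequality.

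\textbf{Main obstacle.} The routine parts are the change-of-variables limits, which follow from continuity of $V$ and dominated convergence; the delicate point is the \emph{uniformity in} $\varepsilon$. One must be sure that $\delta^*$ (from Lemma \ref{PS1}) and $\Upsilon$ (from Theorem \ref{PScondition}) can be compared on a common interval $(0,\varepsilon^*)$, and that the smallness thresholds coming from the three convergences in Step 1 (for $I_\varepsilon(w_\varepsilon)$, for $Q_\varepsilon(w_\varepsilon)$) and the geometric condition $\Upsilon<\delta^*$ are all met simultaneously after finitely many shrinkings of $\varepsilon^*$ — which is fine since $l$ is finite, so one takes the minimum over $j$ of the resulting thresholds. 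A secondary subtlety is justifying the compact-support (or fast-decay) approximation of the minimizer $u_0$: since $X=W^{1,p}\cap L^{F_1}$ and $F_1$ is an $N$-function satisfying $(\Delta_2)$ by Lemma \ref{F1}, truncation-and-cutoff arguments converge in the $X$-norm, and $I_\varepsilon$ is continuous on $X$, so the approximation is harmless.
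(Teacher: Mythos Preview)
Your argument is correct and follows essentially the same route as the paper: shift a minimizer $u_0$ of $\mathcal{I}_{0,a}$ to $x^j/\varepsilon$, use dominated convergence to get both $I_\varepsilon(w_\varepsilon)\to\mathcal{I}_{0,a}$ and $Q_\varepsilon(w_\varepsilon)\to x^j$, then invoke the contrapositive of Lemma~\ref{PS1} on $\partial\theta_\varepsilon^j$. Two minor corrections that do not affect the outcome: the compact-support approximation of $u_0$ is unnecessary (since $V$ and $\chi$ are bounded, dominated convergence applies directly to the minimizer $u_0\in X$, and this is what the paper does); and shrinking $\varepsilon^*$ cannot force $\Upsilon<\delta^*$ because neither constant depends on $\varepsilon^*$ --- but you don't need this comparison at all, since Step~1 already gives $\beta_\varepsilon^j<\mathcal{I}_{0,a}+\tfrac14\delta^*$ for small $\varepsilon$, which combined with $\tilde\beta_\varepsilon^j\ge\mathcal{I}_{0,a}+\delta^*$ yields $\beta_\varepsilon^j<\tilde\beta_\varepsilon^j$ directly (this is precisely how the paper closes the argument).
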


\begin{proof}
According to Theorem
\ref{limittheorem}, there is a function $u\in S(a)$ such that
$$I_0(u)=\mathcal{I}_{0,a}
~\text{and}~I'_0(u)=0~\text{in}~X^*.
$$
For $j\in\{0,1,\cdots,l\}$, we define the function $\hat{u}_\varepsilon^j:\R^N\to\R$ by
$$
\hat{u}_\varepsilon^j=u(x-x^j/\varepsilon).
$$
By a simple change of variable, one has that
$$
I_{\varepsilon}(\hat{u}_\varepsilon^j)  =
\frac1p\int_{\mathbb{R}^N}\left[|\nabla u |^{p } +(V(\varepsilon  x+x^j)+1)|u |^{p}\right]dx
+\int_{\mathbb{R}^N}F_1 (u )dx-\int_{\mathbb{R}^N}F_2 (u ) dx
$$
which gives that
\begin{equation}\label{9}
\limsup_{\varepsilon\to0^+}I_{\varepsilon}(\hat{u}_\varepsilon^j) =
I_{0}(u) =\mathcal{I}_{0,a}.
\end{equation}
From which, decreasing $\varepsilon^*>0$
if necessary,
$$
I_{\varepsilon}(\hat{u}_\varepsilon^j)<\mathcal{I}_{0,a}+\frac{1}{4}\delta^*,~\forall \varepsilon\in(0,\varepsilon^*),
$$
where $\delta^*>0$ comes from Lemma \ref{PS1}.
Moreover, we easily deduce that $Q_\varepsilon(\hat{u}_\varepsilon^j)\to x^j$
as $\varepsilon\to0^+$ and so $\hat{u}_\varepsilon^j\in \theta_{\varepsilon}^{j}$
by decreasing $\varepsilon^*>0$
if necessary. So, decreasing $\delta^*>0$
if necessary, we have that
$$
\beta_\varepsilon^j<\mathcal{I}_{0,a}+\Upsilon,~\forall \varepsilon\in(0,\varepsilon^*)
$$
which is the first part of the lemma.
To reach the remaining one, if $u\in\partial \theta_{\varepsilon}^{j}$, then
$$
u\in S(a)~\text{and}~|Q_{\varepsilon}(u)-x^{j}|=\rho_{0}>\frac\rho2
$$
leading to $Q_\varepsilon({u})\not\in K_{\frac{\rho_0}{2}}$.
Due to Lemma \ref{PS1}, we find that
$$
I_{\varepsilon}(u)>\mathcal{I}_{0,a}+\frac{\delta^*}{2},~\text{for all}~u\in\partial\theta_\varepsilon^j
~\text{and}~\varepsilon\in(0,\varepsilon^*),
$$
and so
$$
\tilde{\beta}_\varepsilon^j=\inf\limits_{u\in\partial\theta_\varepsilon^j}I_{\varepsilon}(u)\geq \mathcal{I}_{0,a}
+\frac{1}{2}\delta^*,~\forall\varepsilon\in(0,\varepsilon^*),
$$
from where it follows that
$$
\beta_\varepsilon^j<\tilde{\beta}_\varepsilon^j, \text{ for all } \varepsilon\in(0,\varepsilon^*)
$$
finishing the proof of this lemma.
\end{proof}

Now, we are in a position to investigate the existence of multiple critical points for
$I_\varepsilon$ constrained on $S(a)$.

\begin{proposition}\label{existence}
Let $2\leq p<N$ and $a>\tilde{a}>0$. Suppose $\varepsilon\in(0,\varepsilon^*)$ to be fixed, decreasing $\varepsilon^*>0$
if necessary, then $I_\varepsilon|_{S(a)}$ has at least $l$ different nontrivial critical points.
\end{proposition}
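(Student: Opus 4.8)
The plan is to produce, for each $j\in\{1,\dots,l\}$, exactly one critical point $u_\varepsilon^j$ of $I_\varepsilon|_{S(a)}$ lying inside the region $\theta_\varepsilon^j$, and then to exploit the barycentre‑type map $Q_\varepsilon$ together with the disjointness of the balls $B_{\rho_0}(x^j)$ to conclude that these $l$ critical points are pairwise distinct. Fix $j$ and consider the localized minimization level $\beta_\varepsilon^j=\inf_{u\in\theta_\varepsilon^j}I_\varepsilon(u)$. Since $V(\varepsilon x)+1\geq V_0+1\geq 0$ by $(\hat V_1)$, the functional $I_\varepsilon$ is coercive and bounded from below on $S(a)$ exactly as in Lemma \ref{realnumber}, so $\beta_\varepsilon^j\in\R$ is well defined, and after shrinking $\varepsilon^*>0$ if necessary, Lemma \ref{PS2} provides the two decisive strict inequalities
\[
\beta_\varepsilon^j<\mathcal{I}_{0,a}+\Upsilon
\quad\text{and}\quad
\beta_\varepsilon^j<\tilde{\beta}_\varepsilon^j=\inf_{u\in\partial\theta_\varepsilon^j}I_\varepsilon(u),
\qquad \forall\,\varepsilon\in(0,\varepsilon^*).
\]

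Next I would apply Ekeland's variational principle on the complete metric space $\overline{\theta_\varepsilon^j}\subset S(a)$ to obtain a minimizing sequence $\{u_n\}$ with $I_\varepsilon(u_n)\to\beta_\varepsilon^j$ and $\|(I_\varepsilon|_{S(a)})'(u_n)\|_{X^*}\to 0$. The strict inequality $\beta_\varepsilon^j<\tilde{\beta}_\varepsilon^j$ forces $u_n\in\theta_\varepsilon^j$ for $n$ large, and since $\theta_\varepsilon^j$ is relatively open in $S(a)$ (by continuity of $Q_\varepsilon$) the resulting sequence is a genuine $(PS)_{\beta_\varepsilon^j}$ sequence for $I_\varepsilon$ constrained only by $S(a)$. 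Because $\beta_\varepsilon^j<\mathcal{I}_{0,a}+\Upsilon$, Theorem \ref{PScondition} applies and yields, along a subsequence, $u_n\to u_\varepsilon^j$ in $X$ with $u_\varepsilon^j\in S(a)$, $I_\varepsilon(u_\varepsilon^j)=\beta_\varepsilon^j$ and $(I_\varepsilon|_{S(a)})'(u_\varepsilon^j)=0$; invoking $\beta_\varepsilon^j<\tilde{\beta}_\varepsilon^j$ once more gives $u_\varepsilon^j\notin\partial\theta_\varepsilon^j$, hence $u_\varepsilon^j\in\theta_\varepsilon^j$. By the Lagrange multiplier rule (with $S(a)=\Psi^{-1}(\{a^p/p\})$, $\Psi(u)=\frac1p\int_{\R^N}|u|^pdx$, as in Theorem \ref{PScondition}), $u_\varepsilon^j$ solves the corresponding Euler equation for some $\lambda_\varepsilon^j\in\R$, and since $u_\varepsilon^j\in S(a)$ it is nontrivial.

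Finally, for the pairwise distinctness of $u_\varepsilon^1,\dots,u_\varepsilon^l$ I would note that $|Q_\varepsilon(u_\varepsilon^j)-x^j|<\rho_0$ for every $j$, while $\overline{B_{\rho_0}(x^i)}\cap\overline{B_{\rho_0}(x^j)}=\emptyset$ for $i\neq j$ by the choice of $\rho_0$; hence $Q_\varepsilon(u_\varepsilon^i)\neq Q_\varepsilon(u_\varepsilon^j)$ and therefore $u_\varepsilon^i\neq u_\varepsilon^j$ whenever $i\neq j$, which produces the desired $l$ distinct nontrivial critical points. The main obstacle — and the reason the preceding lemmas were assembled the way they are — is the simultaneous control of two thresholds: the localized level $\beta_\varepsilon^j$ must lie strictly below the compactness threshold $\mathcal{I}_{0,a}+\Upsilon$ of Theorem \ref{PScondition}, so that the Ekeland sequence actually converges, and strictly below the boundary level $\tilde{\beta}_\varepsilon^j$, so that the minimizer cannot escape to $\partial\theta_\varepsilon^j$ and the $(PS)$ sequence remains truly constrained only by $S(a)$. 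Both are delivered by Lemma \ref{PS2}, but only once $\varepsilon^*$ (and $\delta^*$, $\Upsilon$) have been tuned compatibly across Lemma \ref{PS1}, Lemma \ref{PS2} and Theorem \ref{PScondition}; keeping these dependencies consistent, uniformly in $j\in\{1,\dots,l\}$, is the delicate bookkeeping behind the proof.
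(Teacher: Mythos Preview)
Your proof is correct and follows essentially the same route as the paper: localized minimization on $\theta_\varepsilon^j$, Ekeland's principle together with the strict gap $\beta_\varepsilon^j<\tilde\beta_\varepsilon^j$ to push the almost-minimizers into the interior and obtain a genuine $(PS)_{\beta_\varepsilon^j}$ sequence for $I_\varepsilon|_{S(a)}$, compactness via Theorem \ref{PScondition} thanks to $\beta_\varepsilon^j<\mathcal I_{0,a}+\Upsilon$, and distinctness via $Q_\varepsilon$ and the disjoint balls $\overline{B_{\rho_0}(x^j)}$. The only cosmetic difference is that the paper spells out explicitly the passage from Ekeland's metric inequality to $\|(I_\varepsilon|_{S(a)})'(u_n^j)\|_{X^*}\to0$ by constructing $\mathcal C^1$ paths $\gamma(t)=a(u_n^j+tv)/|u_n^j+tv|_p$ in $S(a)$ for $v\in T_{u_n^j}S(a)$, whereas you invoke the relative openness of $\theta_\varepsilon^j$ in $S(a)$ to reach the same conclusion in one line.
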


\begin{proof}
Given a $j\in\{1,\cdots,l\}$, we could exploit the Ekeland's variational principle to find a sequence $\{u_n^j\}\subset S(a)$ satisfying
$$
I_\varepsilon(u_n^j)\to\beta_\varepsilon^j
$$
and
$$
I_{\varepsilon}(v)-I_{\varepsilon}(u_{n}^{j})\geq-
\frac1n\|v-u_{n}^{j}\|,~\forall v\in\theta_{\varepsilon}^{j}~\mathrm{with}~ v\neq u_{n}^{j}.
$$
It follows from
 Lemma \ref{PS2}
that
$$
\beta_{\varepsilon}^{j}<\tilde{\beta}_{\varepsilon}^{j},~\mathrm{for~all}~\varepsilon\in(0,\varepsilon^*),
$$
and thereby $u_{n}^{j}\in \theta_{\varepsilon}^{j}\backslash\partial\theta_{\varepsilon}^{j}$ for $n$ large enough.
For all $v\in T_{u_n^i}S( a) = \{ w\in X:  \int_{\R^N} |u_n^i|^{p-2}u_n^iwdx= 0\}$,  there
exists a $\zeta>0$ such that the path $\gamma:(-\zeta,\zeta)\to
S( a)$ defined by
$$
\gamma(t)= \frac{a(u_n^j+tv)}{|u_n^j+tv|_p}
$$
which is of class $\mathcal{C}^1((-\zeta,\zeta),S(a))$ and satisfies
$$
\gamma(t)\in\theta_{\varepsilon}^{j}\backslash\partial\theta_{\varepsilon}^{j},~
\forall t\in(-\zeta, \zeta),~\gamma(0)=u_{n}^{j}~\mathrm{and}~\gamma'(0)=v.
$$
Hence,
$$
I_{\varepsilon}(\gamma(t))-I_{\varepsilon}(u_{n}^{j})\geq-\frac{1}{n}\|\gamma(t)-u_{n}^{j}\|,~\forall t\in(-\zeta,\zeta),
$$
and in particular,
$$
\frac {I_{\varepsilon}(\gamma(t))-I_{\varepsilon}(\gamma(0)))}{t}=\frac{I_{\varepsilon}(\gamma(t))
-I_{\varepsilon}(u_{n}^{j})}{t}\geq-\frac{1}{n}\left\|\frac{\gamma(t)-u_{n}^{j}}{t}\right\|=
-\frac{1}{n}\left\|\frac{\gamma(t)-\gamma(0)}{t}\right\|,~\forall t\in(0,\zeta).
$$
Since $I_{\varepsilon}\in \mathcal{C}^1(X, \mathbb{R} ) $, taking the limit of $t\to0^+$, we get

$$
I_{\varepsilon}^{\prime}(u_n^j)v\geq-\frac1n\|v\|.
$$
Now, we replace $v$ with $-v$ to obtain
$$
\sup\{|I_{\varepsilon}^{\prime}(u_{n}^{j})v|:\|v\|\le1\}\le\frac{1}{n},
$$
leading to
$$
I_{\varepsilon}(u_{n})\to\beta_{\varepsilon}^{j}~\mathrm{as}~ n\to+\infty
~\mathrm{and}~\|I_{\varepsilon}|_{S(a)}^{\prime}(u_{n})\|_{X^*}\to0~\mathrm{as}~ n\to+\infty,
$$
 that is, $\{u_n^j\}$ is a $(PS)_{\beta_\varepsilon^j}$ for $I_\varepsilon$ restricted to $S(a)$.
  Since $\beta_\varepsilon^j<\mathcal{I}_{0,a}+\Upsilon$ by Lemma \ref{PS2},
 then Theorem \ref{PScondition} ensures that there is a $u^j$ such that $u_n^j\to u^j$ in $X$. Thus,
$$
 u^j\in \theta_\varepsilon^j, ~I_\varepsilon( u^j) = \beta_\varepsilon^j ~\text{and}~
  I_\varepsilon|_{S( a)} ^{\prime}( u^j) = 0.
$$
Owing to the following facts
$$
Q_{\varepsilon}(u^{i})\in\overline{B_{\rho_{0}}(x^{i})},~Q_{\varepsilon }(u^{j})\in\overline{B_{\rho_{0}}(x^{j})}
$$
and
$$
\overline{B_{\rho_{0}}(x^{i})}\cap\overline{B_{\rho_{0}}(x^{j})}=\emptyset\:\mathrm{for}\:i\neq j,
$$
we conclude that $u^i\neq u^j$ for $i\neq j$ while $1\leq i, j\leq l$. Therefore, $I_{\varepsilon}$ has at least $l$ nontrivial critical points
$(u^j,\lambda^j)$ with $\lambda^j<0$
for all $\varepsilon\in ( 0,\varepsilon^*)$. The proof is completed.
 \end{proof}

In order to study the concentrating behavior of positive solutions
for \eqref{mainequation1}-\eqref{mainequation1a},
we shall depend on the obtained solutions of Problem
\eqref{mainequation2}.
According to Proposition \ref{existence},
for all $2\leq p<N$ and $a>\tilde{a}>0$ and
decreasing $\varepsilon^*>0$
if necessary, there are $l$ couples of $(v^j_\varepsilon,\lambda^j_\varepsilon)\in X\times\R$
such that
$$
v^j_\varepsilon\in \theta_\varepsilon^j,~I_\varepsilon(v^j_\varepsilon)=\beta_\varepsilon^j
~\text{and}~I'_\varepsilon(v^j_\varepsilon)-\lambda_\varepsilon^j\Psi'(v^j_\varepsilon)=0~\text{in}~X^*,
$$
where $j\in\{1,2,\cdots,l\}$, $v^j_\varepsilon(x)>0$ for all $x\in\R^N$ and $\lambda^j<0$.

\begin{lemma}\label{nonvanishing}
Let $2\leq p<N$ and $a>\tilde{a}>0$. Suppose $\varepsilon\in(0,\varepsilon^*)$ to be fixed, decreasing $\varepsilon^*>0$
if necessary, there are $y^j_\varepsilon\in\R^N$, $R_0^j>0$ and $\beta_0^j>0$ such that
$$
\int_{B_{R_0}(y_\varepsilon^j)}|v^j_\varepsilon|^pdx\geq\beta_0^j,
$$
for $j\in\{1,2,\cdots,l\}$. Moreover, the family $\{\varepsilon y_\varepsilon^j\}$ is bounded
and, passing to a subsequence if necessary, $\varepsilon y_\varepsilon^j\to x^j$ as $\varepsilon\to0^+$.
\end{lemma}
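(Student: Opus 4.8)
The plan is to exploit the fact that, along any sequence $\varepsilon_n\to0^+$, the solutions $\{v^j_{\varepsilon_n}\}$ behave like a minimizing family for the limit problem, and then to combine the compactness theorem with the geometric information carried by $\theta_\varepsilon^j$ and by Lemma~\ref{PS1}. First I would record that $\beta_\varepsilon^j\to\mathcal{I}_{0,a}$ as $\varepsilon\to0^+$: since $V(\varepsilon x)\ge V_0$ we have $I_\varepsilon\ge I_0$ on $X$, hence $\mathcal{I}_{0,a}\le\mathcal{I}_{\varepsilon,a}\le\beta_\varepsilon^j$, while the test functions $\hat{u}_\varepsilon^j\in\theta_\varepsilon^j$ from Lemma~\ref{PS2} satisfy $\limsup_{\varepsilon\to0^+}I_\varepsilon(\hat{u}_\varepsilon^j)=\mathcal{I}_{0,a}$ by \eqref{9}, so $\limsup_{\varepsilon\to0^+}\beta_\varepsilon^j\le\mathcal{I}_{0,a}$. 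In particular $\mathcal{I}_{0,a}\le I_0(v_\varepsilon^j)\le I_\varepsilon(v_\varepsilon^j)=\beta_\varepsilon^j\to\mathcal{I}_{0,a}$, so each $\{v^j_{\varepsilon_n}\}\subset S(a)$ is a minimizing sequence for $\mathcal{I}_{0,a}=\inf_{S(a)}I_0$. For the non-vanishing I would argue by contradiction: if no fixed $R_0^j,\beta_0^j>0$ worked for all small $\varepsilon$, one could choose $\varepsilon_n\to0^+$ with $\sup_{y}\int_{B_1(y)}|v^j_{\varepsilon_n}|^p\to0$; by the Vanishing Lemma $v^j_{\varepsilon_n}\to0$ in $L^s(\R^N)$ for $p<s<p^*$, hence $\int_{\R^N}F_2(v^j_{\varepsilon_n})\to0$ by property-$(\textbf{P}_2)$, and then using $F_1\ge0$ and $V(\varepsilon_nx)+1\ge V_0+1\ge0$ from $(\hat{V}_1)$ we would get $\beta_{\varepsilon_n}^j=I_{\varepsilon_n}(v^j_{\varepsilon_n})\ge-\int_{\R^N}F_2(v^j_{\varepsilon_n})\to0$, contradicting $\beta_{\varepsilon_n}^j\to\mathcal{I}_{0,a}<0$. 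Choosing $y^j_\varepsilon$ to (nearly) maximize $y\mapsto\int_{B_{R_0^j}(y)}|v^j_\varepsilon|^p$ then yields the first assertion.

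Next, fixing $\varepsilon_n\to0^+$ and setting $w_n=v^j_{\varepsilon_n}(\cdot+y^j_{\varepsilon_n})$, I would apply Theorem~\ref{compactness} to the minimizing sequence $\{v^j_{\varepsilon_n}\}$: up to a subsequence either $v^j_{\varepsilon_n}\to v$ in $X$, or $v^j_{\varepsilon_n}(\cdot+z_n)\to v$ in $X$ with $|z_n|\to\infty$, and in both cases $v\in S(a)$ with $I_0(v)=\mathcal{I}_{0,a}$. Since $v$ has full mass $a$, almost all the $L^p$-mass of $v^j_{\varepsilon_n}$ concentrates in bounded balls around the relevant translate, so the point $y^j_{\varepsilon_n}$ chosen above must stay within a bounded distance of it (otherwise $\int_{B_{R_0^j}(y^j_{\varepsilon_n})}|v^j_{\varepsilon_n}|^p\ge\beta_0^j$ fails); hence, up to a further subsequence, $w_n\to w:=v(\cdot+\zeta)$ strongly in $X$ for some $\zeta\in\R^N$, with $w\in S(a)$ and $I_0(w)=\mathcal{I}_{0,a}$.

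Performing the change of variables $x\mapsto x+y^j_{\varepsilon_n}$ in $I_{\varepsilon_n}(v^j_{\varepsilon_n})$ and using $w_n\to w$ in $X$ together with $\int_{\R^N}F_2(w_n)\to\int_{\R^N}F_2(w)$ (property-$(\textbf{P}_2)$), $\int_{\R^N}F_1(w_n)\to\int_{\R^N}F_1(w)$ ($F_1\in(\Delta_2)$ by Lemma~\ref{F1}) and $\int_{\R^N}|w_n|^p=a^p$, one obtains $\frac1p\int_{\R^N}V(\varepsilon_n(x+y^j_{\varepsilon_n}))|w_n|^p\,dx\to\frac{V_0}{p}a^p$. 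Since $V(\varepsilon_n(x+y^j_{\varepsilon_n}))\ge V_0$ and $|w_n|^p\to|w|^p$ a.e. and in $L^1(\R^N)$ with $|w|_p^p=a^p$, subtracting the constant $V_0a^p$ forces $V(\varepsilon_n(x+y^j_{\varepsilon_n}))\to V_0$ a.e. on $\{w\ne0\}$. If $\{\varepsilon_ny^j_{\varepsilon_n}\}$ were unbounded along a subsequence, then $V(\varepsilon_n(x+y^j_{\varepsilon_n}))\to V_\infty$ for every $x$, giving $V_\infty=V_0$ and contradicting $(\hat{V}_1)$; hence $\{\varepsilon y^j_\varepsilon\}$ is bounded, which is the second assertion, and along a subsequence $\varepsilon_ny^j_{\varepsilon_n}\to y^*$ with $V(y^*)=V_0$, i.e. $y^*\in\{x^1,\dots,x^l\}$.

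It remains to pin $y^*=x^j$. Because $\beta_\varepsilon^j\to\mathcal{I}_{0,a}$, for small $\varepsilon$ we have $I_\varepsilon(v^j_\varepsilon)<\mathcal{I}_{0,a}+\delta^*$, so Lemma~\ref{PS1} gives $Q_\varepsilon(v^j_\varepsilon)\in K_{\rho_0/2}$; combined with $v^j_\varepsilon\in\theta_\varepsilon^j$ (so $|Q_\varepsilon(v^j_\varepsilon)-x^j|<\rho_0$) and the disjointness $\overline{B_{\rho_0}(x^i)}\cap\overline{B_{\rho_0}(x^j)}=\emptyset$ for $i\ne j$, this localizes $Q_\varepsilon(v^j_\varepsilon)\in\overline{B_{\rho_0/2}(x^j)}$. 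On the other hand, after the same change of variables, $Q_{\varepsilon_n}(v^j_{\varepsilon_n})=a^{-p}\int_{\R^N}\chi(\varepsilon_n(x+y^j_{\varepsilon_n}))|w_n|^p\,dx\to\chi(y^*)$, using $w_n\to w$ in $L^p(\R^N)$, $|w|_p^p=a^p$, the boundedness and continuity of $\chi$, and $\varepsilon_n(x+y^j_{\varepsilon_n})\to y^*$. Therefore $\chi(y^*)\in\overline{B_{\rho_0/2}(x^j)}\subset B_{r_0}(0)$, which by definition of $\chi$ forces $\chi(y^*)=y^*$; together with $y^*=x^i$ for some $i$ and the separation of the $x^i$'s, this yields $i=j$, i.e. $y^*=x^j$. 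As every subsequential limit of the bounded family $\{\varepsilon y^j_\varepsilon\}$ equals $x^j$, we conclude $\varepsilon y^j_\varepsilon\to x^j$. I expect the main difficulty to be this compactness step: reconciling the spatial dichotomy of Theorem~\ref{compactness} with the constraint $v^j_\varepsilon\in\theta_\varepsilon^j$, and controlling the $\varepsilon$-dependent shifted potential $V(\varepsilon_n(\cdot+y^j_{\varepsilon_n}))$ precisely enough (it could a priori tend to $V_0$, to $V_\infty$, or to an intermediate level of $V$) to exclude all cases but $V(y^*)=V_0$ — which is exactly where \eqref{5}, Corollary~\ref{limitcorollary}, Lemma~\ref{PS1} and the separation of $x^1,\dots,x^l$ all enter.
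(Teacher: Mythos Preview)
Your proof is correct and follows essentially the same strategy as the paper: non-vanishing via Lions' lemma and the negativity of the energy level, strong convergence of translates via the compactness theorem (Theorem~\ref{compactness}) for the minimizing sequence of $I_0$, exclusion of $|\varepsilon_n y^j_{\varepsilon_n}|\to\infty$ by comparison with the $V_\infty$-level, and identification of the limit point through the barycenter $Q_\varepsilon$ together with $v^j_\varepsilon\in\theta^j_\varepsilon$. The only tactical difference is that the paper applies Theorem~\ref{compactness} directly to the already translated sequence $\bar v^j_\varepsilon=v^j_\varepsilon(\cdot+y^j_\varepsilon)$, whose weak limit is nonzero by the non-vanishing step, so alternative~(i) of the theorem fires immediately; your route through the dichotomy for $v^j_{\varepsilon_n}$ and the subsequent bound on $|y^j_{\varepsilon_n}-z_n|$ is a valid but slightly longer way to reach the same strong convergence $w_n\to w$, and your extra appeal to Lemma~\ref{PS1} in the last step is unnecessary since $v^j_\varepsilon\in\theta^j_\varepsilon$ alone already gives $|Q_{\varepsilon_n}(v^j_{\varepsilon_n})-x^j|\le\rho_0$ and hence $|y^*-x^j|\le\rho_0$, which combined with $y^*\in\{x^1,\dots,x^l\}$ and the separation of the $x^i$'s forces $y^*=x^j$.
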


 \begin{proof}
If it is not the case, there is a sequence $\{\varepsilon_n\}$ with $\varepsilon_n\to0^+$
such that
$$
\lim_{n\to\infty}\sup_{y\in\R^N}\int_{B_r(y)}|v^j_{\varepsilon_n}|^pdx=0
$$
for all $R>0$. By means of Lion's Vanishing lemma, we would have that $
v^j_{\varepsilon_n}\to0$ in $L^q(\R^N)$ for each $p<q<p^*$ leading to
$F_2(u_n)\to0$ in $L^1(\R^N)$ by property-$(\textbf{P}_2)$.
Owing to $F_1(s)\geq0$ for all $s\in\R$ by property-$(\textbf{P}_1)$, there holds
$\lim\limits_{n\to\infty}I_{\varepsilon_n}(v^j_{\varepsilon_n})\geq0$ which contradicts with the fact that
\begin{equation}\label{11}
 \lim_{n\to\infty}I_{\varepsilon_n}(v^j_{\varepsilon_n})=
\lim_{n\to\infty}\beta_{\varepsilon_n}^i\leq\mathcal{I}_{0,a}+\Upsilon<0.
\end{equation}
So, we can define $\bar{v}^j_\varepsilon(\cdot)=v^j_\varepsilon(\cdot+y^j_\varepsilon)$
and $\{\bar{v}^j_\varepsilon\}$ is bounded with respect to $\varepsilon\in(0,\varepsilon^*)$.
Therefore, there is a $\bar{v}\in X\backslash\{0\}$ such that $\bar{v}_\varepsilon^j\rightharpoonup \bar{v}^j$ in $X$
as $\varepsilon\to0^+$ along a subsequence. Since $\{\bar{v}^j_\varepsilon\}\subset S(a)$
and
$$
I_{\varepsilon }(v^j_{\varepsilon })\geq I_{0}(v^j_{\varepsilon })
=I_{0}(\bar{v}^j_{\varepsilon })\geq \mathcal{I}_{0,a}
$$
jointly with \eqref{9} yields that $\lim\limits_{\varepsilon\to0^+}I_{0}(\bar{v}^j_{\varepsilon })=\mathcal{I}_{0,a}$.
Recalling Theorem \ref{compactness}, we know that $\bar{v}^j_{\varepsilon }\to \bar{v}$
in $X$ as $\varepsilon\to0^+$.
Suppose that $\{\varepsilon y_\varepsilon^j\}$ is unbounded with respect to $\varepsilon\in(0,\varepsilon^*)$
 and so we can assume that there exists a subsequence
$\{\varepsilon_n y_{\varepsilon_n}^j\}$ such that $|\varepsilon_n y_{\varepsilon_n}^j|\to+\infty$
as $n\to\infty$. Exploiting
$\bar{v}^j_{\varepsilon_n}\to \bar{v}$ in $X$,
 \begin{align*}
   I_{\varepsilon_n}(v^j_{\varepsilon_n}) & =
\frac1p\int_{\mathbb{R}^N}\left[|\nabla v^j_{\varepsilon_n}|^{p } +(V(\varepsilon_n x)+1)|v^j_{\varepsilon_n}|^{p}\right]dx
+\int_{\mathbb{R}^N}F_1 (v^j_{\varepsilon_n})dx-\int_{\mathbb{R}^N}F_2 (v^j_{\varepsilon_n}) dx \\
    &=
\frac1p\int_{\mathbb{R}^N}\left[|\nabla \bar{v}^j_{\varepsilon_n}|^{p } +(V(\varepsilon_n x+\varepsilon_n y_n)+1)|\bar{v}^j_{\varepsilon_n}|^{p}\right]dx
+\int_{\mathbb{R}^N}F_1 (\bar{v}^j_{\varepsilon_n})dx-\int_{\mathbb{R}^N}F_2 (\bar{v}^j_{\varepsilon_n}) dx\\
&\to I_{\infty }(\bar{v})
 \end{align*}
 together with \eqref{11} reveals the following inequality
 $$
 \mathcal{I}_{0,a}+\Upsilon\geq I_{\infty }(\bar{v}) \geq \mathcal{I}_{\infty,a }.
 $$
 Due to \eqref{5}, it is impossible by the definition of $\Upsilon$ appearing in
 Theorem \ref{PScondition}. Therefore, up to a subsequence if necessary,
 $\varepsilon y_\varepsilon^j \to x_0^j$ in $\R^N$ as $\varepsilon\to0^+$
 and then the remaining part is to verify that $x_0^j=x^j$.
Actually, we could use a similar argument, or follow the method adopted in
 the case ii)-(2) in the proof of Lemma \ref{PS1}, to conclude that $V(x_0^j)=V_0$.
 Recalling $v_\varepsilon^j\in\theta_\varepsilon^j$ and it would be simple to see that
 $\lim\limits_{n\to\infty}Q_{\varepsilon_n}(v_\varepsilon^j)=x^j_0$,
 one has that $|x^j-x_0^j|\leq \rho_0$. Hence, we must have that $x_0^j=x^j$.
 The proof is completed.
 \end{proof}

\begin{lemma}\label{decay}
Let $2\leq p<N$ and $a>\tilde{a}>0$. Suppose $\varepsilon\in(0,\varepsilon^*)$ to be fixed, decreasing $\varepsilon^*>0$
if necessary, then $v_\varepsilon^j$ possesses a maximum $\eta_\varepsilon^j$
satisfying $V(\varepsilon\eta_\varepsilon^j)\to V(x^j)$ as $\varepsilon\to0^+$ for $j\in\{1,2,\cdots,l\}$.
Moreover, there exist $C_0^j,c_0^j>0$ such that
$$
v_\varepsilon^j(x)\leq C_0^j\exp(-c_0^j|x-\eta_\varepsilon^j|)
$$
for all $\varepsilon\in(0,\varepsilon^*)$ and $x\in \R^N$.
\end{lemma}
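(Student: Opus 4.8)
The plan is to argue in three stages --- a uniform $L^\infty$-bound, a uniform decay of the translated solutions (which produces the maximum point), and a comparison argument for the exponential estimate. Fix $j\in\{1,\dots,l\}$ and write $v_\varepsilon:=v_\varepsilon^j$. From Lemma~\ref{nonvanishing} I recall that, along a subsequence $\varepsilon\to0^+$, the translates $\bar v_\varepsilon:=v_\varepsilon(\cdot+y_\varepsilon^j)$ converge in $X$ to some $\bar v\neq0$, that $\varepsilon y_\varepsilon^j\to x^j$, and that $\int_{B_{R_0}(0)}|\bar v_\varepsilon|^p\ge\beta_0$ for suitable $R_0,\beta_0>0$; moreover $\lambda_\varepsilon^j\le\lambda^*<0$ and $\{\lambda_\varepsilon^j\}$ is bounded by Lemma~\ref{PSnontrivial}, while $v_\varepsilon>0$ solves $-\Delta_p v_\varepsilon=(\lambda_\varepsilon^j-V(\varepsilon x))v_\varepsilon^{p-1}+v_\varepsilon^{p-1}\log v_\varepsilon^p$ in $\R^N$. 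First I would establish $\Lambda:=\sup_{\varepsilon\in(0,\varepsilon^*)}\|v_\varepsilon\|_{L^\infty(\R^N)}<\infty$ and $v_\varepsilon\in\mathcal{C}^{1,\tau}_{\mathrm{loc}}(\R^N)$: this is exactly the Moser iteration of Step~1 in the proof of Theorem~\ref{Pohozaev}, which only feeds on the (uniformly bounded) $W^{1,p}$-norm of $v_\varepsilon$ and on the $L^p$-subcritical bound $|F_2'(s)|\le C_{\tilde q}|s|^{\tilde q-1}$ from property-$(\textbf{P}_2)$ together with $F_1'(s)s\ge0$, so the resulting constants come out independent of $\varepsilon$; local $\mathcal{C}^{1,\tau}$-regularity then follows from \cite{DiBenedetto}.

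Next I would prove the key uniform decay: for every $\rho>0$ there is $R(\rho)>0$, independent of $\varepsilon$, with $\bar v_\varepsilon(x)<\rho$ for $|x|\ge R(\rho)$. Since $V$ is bounded on $\R^N$ by $(\hat{V}_1)$ and $0\le\bar v_\varepsilon\le\Lambda$, on each ball $B_2(z)$ the function $\bar v_\varepsilon$ is a nonnegative weak subsolution of $-\Delta_p w=G\,w^{p-1}$ with a constant $G=G(\Lambda,\sup_\varepsilon|\lambda_\varepsilon^j|,\|V\|_\infty)$ independent of $\varepsilon$ and $z$ (using $\log\bar v_\varepsilon^p\le p(\log\Lambda)^+$); hence the local boundedness estimate for the $p$-Laplacian gives $\|\bar v_\varepsilon\|_{L^\infty(B_1(z))}\le C(G,N,p)\big(\int_{B_2(z)}|\bar v_\varepsilon|^p\big)^{1/p}$. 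As $\bar v_\varepsilon\to\bar v$ \emph{strongly} in $L^p(\R^N)$ with $\bar v\in L^p(\R^N)$, the right-hand side tends to $0$ uniformly in (small) $\varepsilon$ as $|z|\to\infty$, which is the claim. In particular $\bar v_\varepsilon\to0$ at infinity, so it attains its maximum at some $\bar\eta_\varepsilon^j$, and $\beta_0\le\int_{B_{R_0}(0)}|\bar v_\varepsilon|^p\le|B_{R_0}|\,\bar v_\varepsilon(\bar\eta_\varepsilon^j)^p$ forces $\bar v_\varepsilon(\bar\eta_\varepsilon^j)\ge\gamma:=(\beta_0/|B_{R_0}|)^{1/p}>0$, hence $|\bar\eta_\varepsilon^j|\le R(\gamma)$. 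Therefore $\eta_\varepsilon^j:=\bar\eta_\varepsilon^j+y_\varepsilon^j$ is a maximum point of $v_\varepsilon$ and $\varepsilon\eta_\varepsilon^j=\varepsilon\bar\eta_\varepsilon^j+\varepsilon y_\varepsilon^j\to x^j$, so $V(\varepsilon\eta_\varepsilon^j)\to V(x^j)$ by continuity of $V$.

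For the exponential bound I would run a comparison argument. Pick $\rho_*>0$ so small that $c_1:=-(\lambda^*+1+p\log\rho_*)>0$, and let $R_2>0$ be large enough (depending on $\rho_*$ and $R(\gamma)$ but not on $\varepsilon$) that $v_\varepsilon<\rho_*$ on $\Omega_\varepsilon:=\{|x-\eta_\varepsilon^j|>R_2\}$ for all small $\varepsilon$, which is possible by the previous step since $|\eta_\varepsilon^j-y_\varepsilon^j|=|\bar\eta_\varepsilon^j|\le R(\gamma)$. Using $V(\varepsilon x)\ge V_0\ge-1$, $\lambda_\varepsilon^j\le\lambda^*$ and $\log v_\varepsilon^p<p\log\rho_*$ on $\Omega_\varepsilon$, the equation yields $-\Delta_p v_\varepsilon+c_1 v_\varepsilon^{p-1}\le0$ in $\Omega_\varepsilon$. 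Take $\phi(x)=\Lambda\,e^{c_0(R_2-|x-\eta_\varepsilon^j|)}$; the radial computation $\Delta_p\phi=c_0^{p-1}\big((p-1)c_0-\tfrac{N-1}{|x-\eta_\varepsilon^j|}\big)\phi^{p-1}$ shows that, choosing $0<c_0\le(c_1/(p-1))^{1/p}$, one has $-\Delta_p\phi+c_1\phi^{p-1}\ge0$ in $\Omega_\varepsilon$, while $\phi=\Lambda\ge v_\varepsilon$ on $\partial\Omega_\varepsilon$ and $\phi,v_\varepsilon\to0$ at infinity. The weak comparison principle for $w\mapsto-\Delta_p w+c_1w^{p-1}$ on the exterior domain $\Omega_\varepsilon$ (testing with $(v_\varepsilon-\phi-\sigma)^+$ and letting $\sigma\to0^+$) gives $v_\varepsilon\le\phi$ on $\Omega_\varepsilon$; since moreover $v_\varepsilon\le\Lambda\le\Lambda e^{c_0R_2}e^{-c_0|x-\eta_\varepsilon^j|}$ on $\{|x-\eta_\varepsilon^j|\le R_2\}$, we conclude $v_\varepsilon^j(x)\le C_0^j\exp(-c_0^j|x-\eta_\varepsilon^j|)$ for all $x\in\R^N$ and all $\varepsilon\in(0,\varepsilon^*)$, with $c_0^j=c_0$ and $C_0^j=\Lambda e^{c_0R_2}$ independent of $\varepsilon$.

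The main obstacle is the uniform decay step: one must extract a genuinely $\varepsilon$-uniform local sup bound from the $p$-Laplacian --- this is why the uniform $W^{1,p}$- and $L^\infty$-bounds and the fact that $s\mapsto s^{p-1}\log s^p$ stays $L^p$-subcritical near $0$ and $\infty$ (and is bounded above once $\|v_\varepsilon\|_\infty$ is controlled) are indispensable --- and then combine it with the \emph{strong} $L^p$-convergence of $\bar v_\varepsilon$ to kill the tails; the comparison step is routine once the supersolution is in place. A minor point to be checked along the way is that the local regularity and boundedness theory for $-\Delta_p$ applies with the merely continuous logarithmic right-hand side, which is handled through the decomposition $\tfrac1p|s|^p\log|s|^p=F_2(s)-F_1(s)$ and properties $(\textbf{P}_1)$--$(\textbf{P}_2)$.
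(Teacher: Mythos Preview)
Your three--stage plan (uniform $L^\infty$ via the Moser iteration of Theorem~\ref{Pohozaev}, uniform decay of the translates, then a barrier comparison) is correct and matches the paper's strategy, which delegates the last two stages to Lemmas~A.2 and~A.3 in the Appendix. The only substantive difference is in the uniform--decay step: the paper's Lemma~A.2 runs an explicit annular Moser iteration to push $\|\bar v_\varepsilon\|_{L^\infty(|x|\ge R)}$ below $C\,|\bar v_\varepsilon|_{p^*(|x|\ge R-r)}$ and then uses the strong convergence $\bar v_\varepsilon\to\bar v$ in $X$, whereas you invoke the local subsolution estimate $\|\bar v_\varepsilon\|_{L^\infty(B_1(z))}\le C\|\bar v_\varepsilon\|_{L^p(B_2(z))}$ (valid once you have observed $-\Delta_p\bar v_\varepsilon\le G\,\bar v_\varepsilon^{p-1}$ with an $\varepsilon$--independent $G$) and combine it with the same strong $L^p$--convergence. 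Your route is a bit shorter and avoids the bookkeeping of the exterior iteration; the paper's route is more self--contained in that it does not appeal to a black--box local boundedness theorem. Your exponential--comparison argument is essentially Lemma~A.3 rewritten around the center $\eta_\varepsilon^j$ rather than the origin, and the $\sigma$--shift in the test function is exactly what is needed to make $(v_\varepsilon-\phi-\sigma)^+$ compactly supported in the exterior domain.
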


 \begin{proof}
Firstly, we analyze some properties of $\bar{v}^j_\varepsilon$.
 Since $\bar{v}^j_\varepsilon(\cdot)=v^j_\varepsilon(\cdot+y^j_\varepsilon)$,
 the definition of $v^j_\varepsilon$ reveals that $(\bar{v}^j_\varepsilon,\lambda^j_\varepsilon)$
 is a couple of weak solution to the problem
 \begin{equation}\label{decay1}
 \left\{
   \begin{array}{ll}
   \displaystyle -  \Delta_p \bar{v}^j_\varepsilon+V(\varepsilon x+ \varepsilon x_\varepsilon^j)
   |\bar{v}^j_\varepsilon|^{p-2}\bar{v}^j_\varepsilon=\lambda_\varepsilon^j |\bar{v}^j_\varepsilon|^{p-2}\bar{v}^j_\varepsilon+
   |\bar{v}^j_\varepsilon|^{p-2}\bar{v}^j_\varepsilon\log|\bar{v}^j_\varepsilon|^p
~\text{in}~\R^N, \\
    \displaystyle     \int_{\R^N}|\bar{v}^j_\varepsilon|^pdx=a^p.
   \end{array}
 \right.
\end{equation}
Recalling the arguments explored in Proposition \ref{existence} and Lemma \ref{nonvanishing},
we derive $\bar{v}^j_\varepsilon\to \bar{v}^j$ in $X$, $\lambda^j_\varepsilon\to \lambda^j$ in $\R$
and $\varepsilon x_\varepsilon^j\to x^j$ in $\R^N$ as $\varepsilon\to0^+$.
So, using \eqref{decay1}, $(\bar{v}^j,\lambda^j)$ is a nontrivial solution to
$$
-  \Delta_p v+V_0
   |v|^{p-2}v=\lambda  |v|^{p-2}v+
   |v|^{p-2}v\log|v|^p
~\text{in}~\R^N.
$$
Similar to Step 1 in the proof of Theorem \ref{Pohozaev},
 there holds $\bar{v}^j\in L^\infty(\R^N)$. Hence, $\bar{v}^j_\varepsilon\in L^\infty(\R^N)$
 and there is a constant $C>0$ independent of $\varepsilon$ such that $|\bar{v}^j_\varepsilon|_\infty\leq C$.
 Indeed, one can further deduce that $\bar{v}^j_\varepsilon\in C_{\mathrm{loc}}^{1,\tau}(\mathbb{R}^N)$ for some $\tau\in(0,1)$.
We postpone the detailed proofs in Lemma A.2 in the Appendix
to give that
 $$
 |\bar{v}_\varepsilon^j|_\infty\geq \rho^j~\text{and}
 ~\lim_{|x|\to+\infty}\bar{v}_\varepsilon^j(x)=0~\text{uniformly in}~\varepsilon\in(0,\varepsilon^*).
 $$
where $\rho^j>0$ is independent of $\varepsilon\in(0,\varepsilon^*)$.

 Secondly, we verify that there exist $\bar{C}_0^j,\bar{c}^j_0>0$ such that
 $\bar{v}_\varepsilon^j(x)\leq \bar{C}_0^j\exp(-\bar{c}^j_0|x|)$ for all
 $\varepsilon\in(0,\varepsilon^*)$ and $x\in\R^N$,
 see Lemma A.3 in the Appendix in detail.

Finally,
let $\varrho_\varepsilon^j$  be a maximum of $\bar{v}_\varepsilon^j$,
 we have that $|\bar{v}_\varepsilon^j(\varrho_\varepsilon^j)|_\infty\geq\rho^j$.
 Since $\lim\limits_{|x|\to\infty}\bar{v}_\varepsilon^j(x)=0$ uniformly in $\varepsilon$,
 there exists a $M_0^j>0$ independent of $\varepsilon$ such that $|\varrho_\varepsilon^j|\leq M_0^j$.
 Recalling $\bar{v}^j_\varepsilon(\cdot)=v^j_\varepsilon(\cdot+y^j_\varepsilon)$, then $y^j_\varepsilon+\varrho_\varepsilon^j$
 is a a maximum of of $v^j_\varepsilon$. Define $\eta^j_\varepsilon=y^j_\varepsilon+\varrho^j_\varepsilon$,
 according to Lemma \ref{nonvanishing} and $|\varrho_\varepsilon^j|\leq M_0^j$,
 we are derived that $\varepsilon\eta^j_\varepsilon\to x^j$
 as $\varepsilon\to0^+$ and hence $V(\varepsilon\eta^j_\varepsilon)\to V(x^j)$
 by the continuity of $V$.
 Moreover, since $\bar{v}_\varepsilon^j(x)\leq \bar{C}_0^j\exp(-\bar{c}^j_0|x|)$ for all $x\in\R^N$
 and $|\varrho_\varepsilon^j|\leq M_0^j$, there holds
 $$
 v ^j_\varepsilon(x)=\bar{v}^j_\varepsilon(x-y^j_\varepsilon)\leq \bar{C}^j_0\exp(-\bar{c}^j_0|x-y^j_\varepsilon|)
 =\bar{C}^j_0\exp(-\bar{c}^j_0|x-\eta^j_\varepsilon+\rho^j_\varepsilon|)\leq C_0^j\exp(-c_0^j|x-\eta_\varepsilon^j|)
 $$
 finishing the proof of this lemma.\end{proof}

 \begin{proof}[\textbf{\emph{Proof of Theorem \ref{maintheorem2}}}]
By Proposition \ref{existence} and Lemma \ref{decay}, we see that Problem \eqref{mainequation2}
admits at least $l$ different couples of solutions $(v_\varepsilon^j,\lambda_\varepsilon^j)\in X\times\R$
with $v_\varepsilon^j(x)>0$ for every $x\in\R^N$ and $\lambda^j_\varepsilon<0$, where $j\in\{1,2,\cdots,l\}$.
Moreover, there exist $C_0^j,c_0^j>0$ such that
$$
v_\varepsilon^j(x)\leq C_0^j\exp(-c_0^j|x-\eta_\varepsilon^j|)
$$
for all $\varepsilon\in(0,\varepsilon^*)$ and $x\in \R^N$.
Let $u_\varepsilon^j(\cdot)=v_\varepsilon^j\left(\cdot/\varepsilon\right)$ and $z_\varepsilon^j=\varepsilon \eta_\varepsilon^j$
for
$j\in\{1,2,\cdots,l\}$, then $(u_\varepsilon^j,\lambda_\varepsilon^j)$ is the
desired solution for
$j\in\{1,2,\cdots,l\}$ and Theorem \ref{maintheorem2} is proved.
 \end{proof}

 \section{The autonomous problem}\label{autonomous}

In this section, we mainly deal with the existence of normalized
 solutions for a class of autonomous $p$-Laplacian
 equations with logarithmic nonlinearities.

 \subsection{The $L^p$-subcritical case}\label{autonomous1}\

 In this subsection,
to study the Problem \eqref{mainequation4}, we need the following minimization problems
 $$
m(a)=\inf_{u\in S(a)}J(u)~\text{and}~
m_r(a)=\inf_{u\in S_r(a)}J(u),
$$
where $S_r(a)=S(a)\cap X_r$ and the variational functional $J$ is defined by \eqref{JJ}.

In order to prove Theorem \ref{maintheorem3}, we are going to introduce the following lemmas.

\begin{lemma}\label{aut1}
Let $2\leq p<N$, then
the functional $J$ is coercive and bounded from below on $S(a)$ for all $a>0$
and there is an constant ${a}_*>0$ such that $m(a)\leq0$ for all $a>{a}_*$.
Moreover, $m(a)=m_r(a)$ for all $a>0$.
\end{lemma}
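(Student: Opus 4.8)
The plan is to prove the three assertions in turn.

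\textbf{Coercivity and boundedness below.} Using the decomposition \eqref{decomposition}, on $S(a)$ we may rewrite $J$ as
$$J(u)=\frac1p\int_{\R^N}|\nabla u|^p\,dx+\frac{a^p}{p}+\int_{\R^N}F_1(u)\,dx-\int_{\R^N}F_2(u)\,dx-\frac{\mu}{q}\int_{\R^N}|u|^q\,dx.$$
Fixing $\tilde q\in(p,\bar{p})$, property $(\textbf{P}_2)$ gives $|F_2(s)|\le\frac{C_{\tilde q}}{\tilde q}|s|^{\tilde q}$, and since $p<\tilde q,q<\bar{p}$ the Gagliardo--Nirenberg inequality \eqref{GN}--\eqref{GN2} yields, for $u\in S(a)$,
$$\int_{\R^N}F_2(u)\,dx+\frac{\mu}{q}\int_{\R^N}|u|^q\,dx\le C_1a^{\tilde q(1-\beta_{\tilde q})}\|\nabla u\|_{L^p}^{\tilde q\beta_{\tilde q}}+C_2a^{q(1-\beta_q)}\|\nabla u\|_{L^p}^{q\beta_q},$$
where $\tilde q\beta_{\tilde q}=N(\tilde q/p-1)<p$ and $q\beta_q=N(q/p-1)<p$. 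Combining this with $\int_{\R^N}F_1(u)\,dx\ge0$ from $(\textbf{P}_1)$ and, via \eqref{2}, $\int_{\R^N}F_1(u)\,dx\to+\infty$ as $\|u\|_{F_1}\to+\infty$, one obtains a lower bound for $J(u)$ that tends to $+\infty$ whenever $\|\nabla u\|_{L^p}\to+\infty$ (the term $\frac1p\|\nabla u\|_{L^p}^p$ absorbing the subcritical ones) or $\|u\|_{F_1}\to+\infty$ (the term $\int_{\R^N}F_1(u)\,dx$ diverging). Since on $S(a)$ one has $\|u\|_{W^{1,p}}^p=\|\nabla u\|_{L^p}^p+a^p$, these are precisely the ways $\|u\|\to+\infty$ can occur, so $J$ is coercive on $S(a)$; boundedness below follows because $t\mapsto\frac1p t^p-C_1t^{\tilde q\beta_{\tilde q}}-C_2t^{q\beta_q}$ is bounded below on $[0,+\infty)$.

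\textbf{The estimate $m(a)\le0$ for large $a$.} Fix any $\psi\in X$ with $|\psi|_p=1$, so that $a\psi\in S(a)$, and set $C_\psi:=\frac1p\int_{\R^N}(|\nabla\psi|^p+|\psi|^p-|\psi|^p\log|\psi|^p)\,dx$. Using $\log|a\psi|^p=p\log a+\log|\psi|^p$ together with $|\psi|_p=1$ one computes
$$J(a\psi)=a^p\bigl(C_\psi-\log a\bigr)-\frac{\mu a^q}{q}\int_{\R^N}|\psi|^q\,dx.$$
Since $\mu>0$, the last term is negative and $C_\psi-\log a<0$ once $a>e^{C_\psi}$; hence $m(a)\le J(a\psi)\le0$ for all $a>a_*:=e^{C_\psi}$.

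\textbf{The identity $m(a)=m_r(a)$.} The inequality $m(a)\le m_r(a)$ is immediate from $S_r(a)\subset S(a)$. For the converse, given $u\in S(a)$ replace it by $|u|$ (which lies in $S(a)$ with $J(|u|)=J(u)$) and pass to the Schwarz symmetric decreasing rearrangement $u^*\in X_r$. By equimeasurability of $|u|$ and $u^*$ the integrals $\int_{\R^N}F_1(u)\,dx$ (so that $u^*\in L^{F_1}(\R^N)$, hence $u^*\in X$ and $u^*\in S_r(a)$), $\int_{\R^N}|u|^p\log|u|^p\,dx$ and $\int_{\R^N}|u|^q\,dx$ are all unchanged under $u\mapsto u^*$ — the non-monotonicity of $s\mapsto s^p\log s^p$ causes no trouble, since equimeasurability preserves the integral of an arbitrary Borel function of $|u|$, here $L^1$ by \eqref{decomposition} and the $F_2$-bound above — while P\'olya--Szeg\H{o} gives $\|\nabla u^*\|_{L^p}\le\|\nabla u\|_{L^p}$. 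Therefore $J(u^*)\le J(u)$, so $m_r(a)\le J(u)$ for every $u\in S(a)$, which gives $m_r(a)\le m(a)$. The main obstacle is the first part: one has to pair the Orlicz estimate \eqref{2} for the $F_1$-term with the mass-subcritical Gagliardo--Nirenberg control of the $F_2$- and $\mu$-terms so as to dominate the full norm of $X$, not merely its $W^{1,p}(\R^N)$-component.
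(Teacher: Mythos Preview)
Your proposal is correct and follows essentially the same approach as the paper: coercivity via property $(\textbf{P}_2)$ and Gagliardo--Nirenberg (you add the explicit treatment of the $\mu|u|^q$ term, which the paper leaves implicit in its reference to Lemmas~\ref{realnumber}--\ref{realnumber2}), negativity of $m(a)$ by scaling a fixed $\psi$, and $m(a)=m_r(a)$ via Schwarz rearrangement and P\'olya--Szeg\H{o}. The only cosmetic difference is that the paper justifies preservation of $\int F_1(u)$ and $\int F_2(u)$ under rearrangement by citing monotonicity of $F_1,F_2$, whereas you invoke equimeasurability directly---your route is in fact cleaner, since equality (not merely inequality) of these integrals is what is needed.
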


\begin{proof}
Repeating the calculations in the proofs of Lemmas \ref{realnumber} and \ref{realnumber2},
we can conclude the first part of this lemma and the details are omitted.

Then, we verify that $m(a)=m_r(a)$. Since $S_r(a)\subset S(a)$, one easily sees that $m(a)\leq m_r(a)$.
Thus, we just need to prove that $m(a)\geq m_r(a)$.
Suppose that $\{u_n\}\subset S(a)$ is a minimizing sequence with respect to $m(a)$. Denoting $u_n^*$ to be the Schwarz
symmetric decreasing rearrangement of $u_n$, so the
P\'{o}lya-Szeg\"{o}'s inequality yields that $|\nabla u_n^*|_p\leq |\nabla u_n|_p$.
Noting that $| u_n^*|_r=| u_n|_r$ for every $r\in[p,p^*]$, we obtain that $\{u_n^*\}\subset S_r(a)$.
Since $F_1$ and $F_2$ are nondecreasing in $[0,+\infty]$ by property-$(\textbf{P}_1)$ and
property-$(\textbf{P}_2)$, then the properties of Schwarz rearrangement (see e.g. \cite{LiebLoss})
implies that
$$
 \int_{\mathbb{R}^{N}}F_1(u_{n}^{*})dx=\int_{\mathbb{R}^{N}}F_1(u_{n})dx,~
 \int_{\mathbb{R}^{N}}F_2(u_{n}^{*})dx=\int_{\mathbb{R}^{N}}F_2(u_{n})dx,
$$
 From which, by \eqref{decomposition}, there holds
 $$
\int_{\mathbb{R}^N}|u_n^*|^p\log|u_n^*|^pdx =\int_{\mathbb{R}^N}|u_n|^p\log|u_n|^pdx.
 $$
As a consequence,
 $$
m_{r}(a)=\inf _{u\in S_{r} (a)}J(u)\leq\inf_{u\in S(c)}J(u)=m(a).
$$
The proof is completed.
\end{proof}

\begin{lemma}\label{decreasing}
Let $2\leq p<N$,
then $m(\sqrt[p]{a_1^p+a_2^p})\leq  m(a_1)+m(a_2)$ for all $a_1,a_2>0$.
\end{lemma}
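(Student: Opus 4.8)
The plan is to exploit the fact that the energy $J$ is \emph{additive} over functions with disjoint supports, and to approximate near-minimizers of $m(a_1)$ and $m(a_2)$ by compactly supported functions which can then be translated far apart. First I would observe that $J$ is continuous (indeed of class $\mathcal{C}^1$) on $X=W^{1,p}(\R^N)\cap L^{F_1}(\R^N)$: the terms $\frac1p\int(|\nabla u|^p+|u|^p)\,dx$ and $\frac{\mu}{q}\int|u|^q\,dx$ are continuous on $W^{1,p}(\R^N)$ via the Sobolev imbeddings for $q\in(p,p^*)$, while $-\frac1p\int|u|^p\log|u|^p\,dx=\int F_1(u)\,dx-\int F_2(u)\,dx$ with $u\mapsto\int F_1(u)\,dx$ of class $\mathcal{C}^1$ on $L^{F_1}(\R^N)$ (the Corollary after Lemma \ref{F1}) and $u\mapsto\int F_2(u)\,dx$ continuous on $W^{1,p}(\R^N)$ by the growth bound in $(\textbf{P}_2)$.

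Fix $\ep>0$ and choose $u_i\in S(a_i)$ with $J(u_i)<m(a_i)+\ep$ for $i=1,2$. Truncating $u_i$ by a smooth cut-off $\eta_n$ with $\eta_n\equiv1$ on $B_n$, $\supp\eta_n\subset B_{2n}$, $|\nabla\eta_n|\le C/n$, and renormalizing by the factor $a_i/\|u_i\eta_n\|_{L^p(\R^N)}\to1$, I may assume in addition that each $u_i$ is compactly supported: the only point needing care is $u_i\eta_n\to u_i$ in $L^{F_1}(\R^N)$, which follows from $F_1\in(\Delta_2)$ (Lemma \ref{F1}) together with $\int_{\R^N}F_1\big(u_i(\eta_n-1)\big)\,dx\to0$, the latter by dominated convergence since $0\le F_1\big(u_i(\eta_n-1)\big)\le F_1(u_i)\in L^1(\R^N)$ by monotonicity of $F_1$ on $[0,+\infty)$ from $(\textbf{P}_1)$. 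Now set $a=\sqrt[p]{a_1^p+a_2^p}$, fix a unit vector $e\in\R^N$, and for $R>0$ so large that $\supp u_1\cap\big(Re+\supp u_2\big)=\emptyset$ put $w_R=u_1+u_2(\cdot-Re)$. Since the two summands (and their gradients) have disjoint supports, every integrand occurring in $J$ splits pointwise into the contribution of $u_1$ and that of $u_2(\cdot-Re)$, so by translation invariance of Lebesgue measure $\int|w_R|^p\,dx=a_1^p+a_2^p=a^p$ (hence $w_R\in S(a)$), $\int|\nabla w_R|^p=\int|\nabla u_1|^p+\int|\nabla u_2|^p$, $\int|w_R|^q=\int|u_1|^q+\int|u_2|^q$ and $\int|w_R|^p\log|w_R|^p=\int|u_1|^p\log|u_1|^p+\int|u_2|^p\log|u_2|^p$. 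Therefore $J(w_R)=J(u_1)+J(u_2)$, and
$$
m\big(\sqrt[p]{a_1^p+a_2^p}\big)=m(a)\le J(w_R)=J(u_1)+J(u_2)<m(a_1)+m(a_2)+2\ep;
$$
letting $\ep\to0^+$ gives the claim.

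The routine parts are the continuity of $J$ on $X$ and the fact that compactly supported functions of prescribed $L^p$-norm approximate arbitrary elements of $S(a_i)$ in energy; the step deserving a little care is this approximation in the Orlicz component $L^{F_1}(\R^N)$, where one must pass from modular convergence to norm convergence using $F_1,\tilde F_1\in(\Delta_2)$. An alternative that avoids the density reduction is to keep $u_1,u_2\in S(a_i)$ general, take $R_n\to+\infty$, pass to a subsequence along which $u_2(\cdot-R_ne)\to0$ a.e. (possible since these translates tend to $0$ in $L^p_{\mathrm{loc}}(\R^N)$, $|u_2|^p$ being integrable), and apply the Br\'ezis--Lieb lemma together with Lemma \ref{BrezisLieb} and $(\textbf{P}_2)$ to obtain $J\big(u_1+u_2(\cdot-R_ne)\big)\to J(u_1)+J(u_2)$ and $\|u_1+u_2(\cdot-R_ne)\|_{L^p(\R^N)}\to a$, after which a renormalization by a factor tending to $1$ closes the estimate exactly as above.
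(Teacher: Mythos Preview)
Your proof is correct and follows essentially the same route as the paper: approximate near-minimizers of $m(a_1)$ and $m(a_2)$ by compactly supported functions, translate to make the supports disjoint, and use the additivity of $J$ together with translation invariance. The paper simply invokes the density of $C_0^\infty(\R^N)$ in $X$ to obtain $\psi_i\in S(a_i)$ with disjoint supports and $J(\psi_i)\le m(a_i)+\tfrac{\epsilon}{2}$, whereas you spell out the truncation-plus-renormalization argument and are more careful about convergence in the Orlicz component via $F_1\in(\Delta_2)$; your alternative Br\'ezis--Lieb argument is not in the paper but is a legitimate variant.
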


\begin{proof}
In light of the variational functional $J$ is invariant under any translation in $\R^N$, then, adopting the
definition of $m(a)$ and the density of $C_0^\infty(\R^N)$ in $X$,
we deduce that, for every $\epsilon>0$, there
exist two functions $\psi_1,\psi_2\in C_0^\infty(\R^N)$ with
$\supp\psi_1\cap\supp \psi_2= \emptyset$ and $\psi_1\in S(a_1)$,
$\psi_2\in S(a_2)$ such that
 \begin{equation}\label{decreasing1}
 J(\psi_1)\leq m(a_1)+\frac12\epsilon~\text{and}~
 J(\psi_2)\leq m(a_2)+\frac12\epsilon.
 \end{equation}
 Without loss of generality, we assume that
 \begin{equation}\label{decreasing2}
 \operatorname{dist}(\operatorname{supp}\psi_1,\operatorname{supp}\psi_2)\geq n\text{ for some }n\in\mathbb{N}^+.
  \end{equation}
Now define $\psi\triangleq \psi_1+ \psi_2$, since $\psi_1$ and $\psi_2$ have disjoint supports,
then $\psi \in S(\sqrt[p]{a_1^p+a_2^p})$ and
 \begin{equation}\label{decreasing3}
\left\{
  \begin{array}{ll}
 \displaystyle  \int_{\mathbb{R}^{N}} |\nabla  \psi|^{p}dx=
\int_{\mathbb{R}^{N}} |\nabla  \psi_1|^{p}dx
+\int_{\mathbb{R}^{N}} |\nabla  \psi_2|^{p}dx, \\
 \displaystyle   \int_{\mathbb{R}^{N}} |  \psi|^{s}dx=
\int_{\mathbb{R}^{N}} | \psi_1|^{s}dx
+\int_{\mathbb{R}^{N}} |\psi_2|^{s}dx,\quad \forall s\in[p,p^*], \\
 \displaystyle \int_{\mathbb{R}^{N}}|\psi|^p\log|\psi|^pdx
=\int_{\mathbb{R}^{N}}|\psi_1|^p\log|\psi_1|^pdx
+\int_{\mathbb{R}^{N}}|\psi_2|^p\log|\psi_2|^pdx.
  \end{array}
\right.
  \end{equation}
Hence, for $n\in \mathbb{N}^+$ large enough, we are derived from \eqref{decreasing1},
\eqref{decreasing2} and \eqref{decreasing3} that
 $$
 m(\sqrt[p]{a_1^p+a_2^p})\leq J(\psi)  =J(\psi_1)+J(\psi_2)\leq m(a_1)+m(a_2)+\epsilon,
 $$
 and the proof is completed.
 \end{proof}

\begin{lemma}\label{continuous}
Let $2\leq p<N$,
then the mapping $a\mapsto m(a)$ is continuous on $(0,+\infty)$,
where $a_*>0$ comes from Lemma \ref{aut1}.
\end{lemma}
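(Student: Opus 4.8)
The plan is to prove the upper and lower semicontinuity of $a\mapsto m(a)$ separately, both relying on the elementary rescaling $u\mapsto\frac{b}{a}u$, which maps $S(a)$ onto $S(b)$, together with the explicit way $J$ transforms under it. Precisely, if $u\in X$ with $|u|_p^p=c^p$ and $t>0$, then, using the identity $\int_{\R^N}|tu|^p\log|tu|^pdx=t^p\int_{\R^N}|u|^p\log|u|^pdx+t^p(\log t^p)c^p$, one gets
\begin{equation*}
J(tu)-J(u)=(t^p-1)J(u)-\frac{t^p\log t^p}{p}c^p-\frac{\mu(t^q-t^p)}{q}\int_{\R^N}|u|^qdx .
\end{equation*}
Since $t^p-1\to0$, $t^p\log t^p\to0$ and $t^q-t^p\to0$ as $t\to1$, the right-hand side tends to $0$ as $t\to1$, and it does so uniformly with respect to $u$ ranging over any family for which $J(u)$, $|u|_p^p=c^p$ and $\int_{\R^N}|u|^qdx$ remain bounded. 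Note also that $J(u)\in\R$ for every $u\in X$ (because $F_1(u)\in L^1(\R^N)$ by definition of $X$ and $F_2(u)\in L^1(\R^N)$ by property-$(\textbf{P}_2)$), so, by Lemma \ref{aut1}, $m$ is a well-defined real-valued function and the statement makes sense.

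For the upper semicontinuity, fix $a>0$, take $a_n\to a$ and $\epsilon>0$, and choose $u\in S(a)$ with $J(u)\le m(a)+\epsilon$. Then $w_n\triangleq\frac{a_n}{a}u\in S(a_n)$, and the displayed identity with $c=a$ and $t=\frac{a_n}{a}\to1$ gives $J(w_n)\to J(u)$. Hence $\limsup_{n\to\infty}m(a_n)\le\limsup_{n\to\infty}J(w_n)=J(u)\le m(a)+\epsilon$, and letting $\epsilon\to0^+$ yields $\limsup_{n\to\infty}m(a_n)\le m(a)$.

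For the lower semicontinuity, choose $u_n\in S(a_n)$ with $J(u_n)\le m(a_n)+\frac1n$. Since $a_n\to a>0$, the sequence $\{a_n\}$ lies in a compact subinterval of $(0,+\infty)$; together with the upper semicontinuity just proved, $\{m(a_n)\}$ is bounded, hence $\{J(u_n)\}$ is bounded. By the coercivity of $J$ on $S(a_n)$ (Lemma \ref{aut1}), whose estimate depends only on a bound for $\{a_n\}$, the sequence $\{u_n\}$ is bounded in $X$. Consequently $\int_{\R^N}|\nabla u_n|^pdx$ and $|u_n|_p^p=a_n^p$ are uniformly bounded, $\int_{\R^N}|u_n|^qdx$ is uniformly bounded via the Gagliardo--Nirenberg inequality \eqref{GN}, and, using the decomposition \eqref{decomposition}, the non-negativity of $F_1$, the relation \eqref{2}, the growth bound in property-$(\textbf{P}_2)$ and the Sobolev embedding $W^{1,p}(\R^N)\hookrightarrow L^{\tilde q}(\R^N)$, the logarithmic term $\int_{\R^N}|u_n|^p\log|u_n|^pdx$ is also uniformly bounded. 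Now put $\tilde u_n\triangleq\frac{a}{a_n}u_n\in S(a)$; then $m(a)\le J(\tilde u_n)$, and the displayed identity with $c=a_n$ and $t=\frac{a}{a_n}\to1$, together with these uniform bounds, gives $J(\tilde u_n)-J(u_n)\to0$. Therefore $m(a)\le J(u_n)+o_n(1)\le m(a_n)+\frac1n+o_n(1)$, whence $m(a)\le\liminf_{n\to\infty}m(a_n)$. Combined with the upper bound, $m(a_n)\to m(a)$, and $m$ is continuous on $(0,+\infty)$.

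The only genuinely delicate point is to force the rescaling error $J(\tilde u_n)-J(u_n)$ to vanish \emph{uniformly} in $n$, which requires a uniform control of $\int_{\R^N}|u_n|^p\log|u_n|^pdx$ along near-minimizing sequences; this is exactly where the decomposition \eqref{decomposition}, the structural properties $(\textbf{P}_1)$--$(\textbf{P}_2)$ and the estimate \eqref{2} enter, combined with the Gagliardo--Nirenberg/Sobolev inequalities to bound $\int_{\R^N}F_2(u_n)dx$. Everything else reduces to elementary manipulations of the rescaling identity.
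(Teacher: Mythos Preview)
Your proof is correct and follows essentially the same route as the paper: both argue by the rescaling $u\mapsto\frac{b}{a}u$ applied in each direction (to a fixed near-minimizer at level $a$ for $\limsup m(a_n)\le m(a)$, and to near-minimizers $u_n\in S(a_n)$ for $m(a)\le\liminf m(a_n)$). The paper simply asserts $J\!\left(\frac{a}{a_n}u_n\right)=J(u_n)+o_n(1)$ and $J\!\left(\frac{a_n}{a}v_n\right)=J(v_n)+o_n(1)$ without further comment, whereas you supply the explicit rescaling identity and the uniform bounds on $J(u_n)$, $|u_n|_q^q$ and $\int_{\R^N}|u_n|^p\log|u_n|^pdx$ needed to justify the $o_n(1)$ rigorously; in that sense your argument is a fleshed-out version of theirs rather than a different approach.
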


 \begin{proof}
Given an $a>{a}_*$, without loss of generality,
 we let $a_n>{a}_*$ with $a_n\to a$ as $n\to\infty$.
For all $n\in \mathbb{N}$, let $\{u_n\}\subset S(a_n)$ such that
$J(u_n)\leq m(a_n)+\frac1n$. Thanks to Lemma \ref{aut1},
$\{u_n\}$ is uniformly bounded in $X$ and
$$
m(a)\leq J\left({\frac{a}{a_n}}u_n\right)=J(u_n)+o_n(1)\leq m(a_n)+o_n(1).
$$
On the other hand, given a minimizing sequence $\{v_n\}\subset S(a)$ for $m(a)$, it holds that
$$
m(a_n)\leq J\left({\frac{a_n}{a}}v_n\right)=J(v_n)+o_n(1)=m(a)+o_n(1).
$$
The above two facts reveal the desired result and the proof is completed.
 \end{proof}

\begin{lemma}\label{attained}
Let $2\leq p<N$ and $a>a_*$. Assume that $\{u_n\}\subset S_r(a)$ is a minimizing sequence of $m_r(a)$
with $u_n\rightharpoonup u$ in $X_r$ as $n\to\infty$.
If $u\neq0$, then $u_n\to u$ in $X_r$ as $n\to\infty$.
\end{lemma}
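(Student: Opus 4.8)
The plan is a concentration--compactness argument carried out inside the radial subspace $X_r$: since $X_r$ embeds continuously into $W^{1,p}_{\mathrm{rad}}(\R^N)$, which in turn embeds compactly into $L^q(\R^N)$ for every $p<q<p^*$, the only possible loss of compactness occurs in the $L^p$-norm. First I would record the routine consequences of the hypotheses. By Lemma \ref{aut1}, $\{u_n\}$ is bounded in $X_r$, so along a subsequence $u_n\rightharpoonup u$ in $X_r$, $u_n\to u$ a.e.\ in $\R^N$, and $u_n\to u$ in $L^q(\R^N)$ for every $p<q<p^*$. From property-$(\textbf{P}_2)$ together with the fact that $F_2$ vanishes near the origin we have $|F_2(s)|\le C_q|s|^q$ for a fixed $q\in(p,p^*)$, so the generalized dominated convergence theorem yields $\int_{\R^N}F_2(u_n)\,dx\to\int_{\R^N}F_2(u)\,dx$ and $|u_n|_q^q\to|u|_q^q$. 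Using $\tfrac1p|s|^p\log|s|^p=F_2(s)-F_1(s)$ and $|u_n|_p^p=a^p$ we can rewrite
\[
J(u_n)=\tfrac1p|\nabla u_n|_p^p+\int_{\R^N}F_1(u_n)\,dx+\tfrac{a^p}{p}-\int_{\R^N}F_2(u_n)\,dx-\tfrac{\mu}{q}|u_n|_q^q,
\]
so, letting $n\to\infty$, the limit $\ell:=\lim_n\big(\tfrac1p|\nabla u_n|_p^p+\int_{\R^N}F_1(u_n)\,dx\big)$ exists and equals $m_r(a)-\tfrac{a^p}{p}+\int_{\R^N}F_2(u)\,dx+\tfrac{\mu}{q}|u|_q^q$.

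Next I would rule out the escape of mass, i.e.\ prove $|u|_p=a$. By the classical Br\'ezis--Lieb lemma $a^p=|u_n|_p^p=|u_n-u|_p^p+|u|_p^p+o_n(1)$, and since $u\neq0$ by hypothesis, $b:=|u|_p\in(0,a]$. Assume for contradiction $b<a$ and set $d^p:=a^p-b^p>0$. Weak lower semicontinuity of $v\mapsto|\nabla v|_p^p$ and Fatou's lemma for $v\mapsto\int_{\R^N}F_1(v)\,dx$ (recall $F_1\ge0$ by property-$(\textbf{P}_1)$) give $\tfrac1p|\nabla u|_p^p+\int_{\R^N}F_1(u)\,dx\le\ell$, hence
\[
J(u)=\Big(\tfrac1p|\nabla u|_p^p+\int_{\R^N}F_1(u)\,dx\Big)+\tfrac{b^p}{p}-\int_{\R^N}F_2(u)\,dx-\tfrac{\mu}{q}|u|_q^q\le m_r(a)-\tfrac{d^p}{p}.
\]
Since $u\in S_r(b)$, Lemma \ref{aut1} yields $J(u)\ge m_r(b)=m(b)$, so $m(b)\le m(a)-\tfrac{d^p}{p}\le-\tfrac{d^p}{p}<0$, where we used $m(a)\le0$ for $a>a_*$. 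On the other hand, the amplitude dilation $v\mapsto\theta v$ maps $S(b)$ onto $S(\theta b)$ with $\theta:=a/b>1$, and from the identity $\int_{\R^N}|\theta v|^p\log|\theta v|^p\,dx=\theta^p\log\theta^p\,|v|_p^p+\theta^p\int_{\R^N}|v|^p\log|v|^p\,dx$, together with $\log\theta>0$ and $\theta^p\le\theta^q$, one computes $J(\theta v)\le\theta^p J(v)$ for every $v\in S(b)$; taking the infimum gives $m(a)=m(\theta b)\le\theta^p m(b)<m(b)$ because $m(b)<0$ and $\theta^p>1$. This contradicts $m(b)\le m(a)$, so necessarily $b=a$ and $u\in S_r(a)$.

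It remains to upgrade weak convergence to strong convergence. Since $u\in S_r(a)$ we have $J(u)\ge m_r(a)$; combining this with $\tfrac1p|\nabla u|_p^p+\int_{\R^N}F_1(u)\,dx\le\ell$ and the formula for $\ell$ forces $J(u)=m_r(a)$ and $\tfrac1p|\nabla u|_p^p+\int_{\R^N}F_1(u)\,dx=\ell$. Now $v\mapsto|\nabla v|_p^p$ and $v\mapsto\int_{\R^N}F_1(v)\,dx$ are nonnegative, bounded along $\{u_n\}$, and sequentially weakly lower semicontinuous, while their sum converges to the value of the sum at $u$; it follows that each of them converges, that is $|\nabla u_n|_p\to|\nabla u|_p$ and $\int_{\R^N}F_1(u_n)\,dx\to\int_{\R^N}F_1(u)\,dx$. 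Together with $|u_n|_p=|u|_p$ and $u_n\rightharpoonup u$ in the uniformly convex space $W^{1,p}(\R^N)$, the Radon--Riesz property gives $u_n\to u$ in $W^{1,p}(\R^N)$; and since $u_n\to u$ a.e.\ while $\int_{\R^N}F_1(u_n)\,dx\to\int_{\R^N}F_1(u)\,dx$, the Br\'ezis--Lieb splitting for $F_1$ (exactly as in the proof of Lemma \ref{BrezisLieb}) gives $\int_{\R^N}F_1(u_n-u)\,dx\to0$, which by $F_1\in(\Delta_2)$ (Lemma \ref{F1}) means $u_n\to u$ in $L^{F_1}(\R^N)$. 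Hence $u_n\to u$ in $X_r$.

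The step I expect to be the main obstacle is the exclusion of dichotomy in the second paragraph: since $X_r\hookrightarrow L^p(\R^N)$ is not compact one cannot pass to the limit directly in $|u_n|_p^p$, and---unlike the proof of Theorem \ref{compactness}, which rested on a strict monotonicity of the minimum value---here one must play the weak--lower--semicontinuity estimate $J(u)\le m_r(a)-\tfrac{d^p}{p}$ against the scaling inequality $m(\theta b)\le\theta^p m(b)$, the latter producing a strict decrease precisely when $m(b)<0$; this is also the point where the threshold $a>a_*$ enters.
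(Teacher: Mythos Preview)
Your proof is correct and takes a genuinely different route from the paper's. The paper splits into two cases according to whether or not $u_n\to u$ in $L^p(\R^N)$; when not, it invokes the subadditivity Lemma~\ref{decreasing}, the continuity Lemma~\ref{continuous}, and the logarithmic Br\'ezis--Lieb Lemma~\ref{BrezisLieb} to obtain the chain
\[
m_r(a)\le \lim_n m_r(d_n)+m_r(|u|_p)\le \lim_n J(u_n-u)+J(u)=\lim_n J(u_n)=m_r(a),
\]
and then reduces to the first case. Your argument bypasses Lemmas~\ref{decreasing} and~\ref{continuous} and the full splitting of $J$: you only need the radial $L^q$-compactness for $q\in(p,p^*)$ to pass to the limit in the $F_2$ and power terms, then play the direct weak-lower-semicontinuity estimate $J(u)\le m_r(a)-d^p/p$ against the amplitude-scaling inequality $m(a)\le\theta^p m(b)$ (the same computation as in Lemma~\ref{monotone}, transported to $J$) to rule out $b<a$. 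This is more self-contained and makes explicit where the threshold $a>a_*$ (via $m(a)\le 0$) enters; the paper's approach, by contrast, packages the comparison into the abstract subadditivity--continuity framework for $m$, which is closer in spirit to classical concentration--compactness and reuses lemmas already proved for other purposes in the section.
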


\begin{proof}
Obviously, one sees that $u_n\to u$ in $L^s(\R^N)$ for all $s\in(p,p^*)$
and $|u|_p\leq a$ by Fatou's lemma.
Owing to property-$(\textbf{P}_2)$, we have that $F_2(u_n)\to F_2(u_n)$ in $L^1(\R^N)$. To exhibit the proof clearly,
let us divide the proof into two cases.

\textbf{Case 1.} $u_n\to u$ in $L^p(\R^N)$ along a subsequence as $n\to\infty$.\\
In this case, it holds that $u\in S_r(a)$, then
 we immediately have that
$$
\begin{aligned}
m_r(a)&\leq J(u) =\frac{1}{p}\int_{\mathbb{R}^{N}}(|\nabla u|^{p}+|u|^{p})dx+\int_{\mathbb{R}^{N}}
F_1(u)dx-\int_{\mathbb{R}^{N}}
F_2(u)dx -\frac{\mu}{q}\int_{\mathbb{R}^{N}}|u |^{q} dx\\
&\begin{aligned}&\leq\liminf_{n\to\infty}\left(\frac{1}{p}\int_{\mathbb{R}^N}(|\nabla u_n|^p+|u_n|^p)dx+\int_{\mathbb{R}^{N}}
F_1(u_n)dx-\int_{\mathbb{R}^{N}}
F_2(u_n)dx-\frac{\mu}{q}\int_{\mathbb{R}^{N}}|u_n |^{q} dx\right)\end{aligned} \\
&=\liminf_{n\to\infty}J(u_n)=m_r(a)
\end{aligned}
$$
which yields that
$$
\lim_{n\to\infty} \int_{\mathbb{R}^N}(|\nabla u_n|^p+|u_n|^p)dx
=\int_{\mathbb{R}^N}(|\nabla u |^p+|u |^p)dx
$$
and
$$
\lim_{n\to\infty} \int_{\mathbb{R}^{N}}
F_1(u_n)dx=\int_{\mathbb{R}^{N}}
F_1(u )dx.
$$
According to $F_1\in(\Delta_2)$ by Lemma \ref{F1}, then
the above two limits
 provide us that $u_n\to u$ in $X_r$ as $n\to\infty$. The proof is done in this case.

 \textbf{Case 2.} $u_n\not\to u$ in $L^p(\R^N)$ as $n\to\infty$.\\
In this case, denoting $d_n\triangleq|u_n-u|_p$, then, up to a subsequence if necessary,
$$
\lim_{n\to\infty} |u_n-u|_p^p =\lim_{n\to\infty}d_n^p\triangleq d^p>0.
$$
It follows from the Br\'{e}zis-Lieb lemma that $a^p=\lim\limits_{n\to\infty}(d_n^p+|u|_p^p)$. Combining Lemmas \ref{BrezisLieb} and
 \ref{decreasing}-\ref{continuous},
\begin{align*}
 m_r(a) &=m_r\left(\lim\limits_{n\to\infty}\sqrt[p]{d_n^p+|u|_p^p}\right)=\lim\limits_{n\to\infty}m_r\left(\sqrt[p]{d_n^p+|u|_p^p}\right) \\
   &\leq \lim\limits_{n\to\infty}m_r(d_n)+m_r(|u|_p)\leq \lim\limits_{n\to\infty}J(u_n-u)+J(u)\\
   &=\lim\limits_{n\to\infty}J(u_n)= m_r(a).
\end{align*}
Proceeding as the proof in Case 1, we see that $u_n\to u$ in $X_r$ along a subsequence as $n\to\infty$.
The proof of this lemma is completed.
\end{proof}

\begin{proof}[\textbf{\emph{Proof of Theorem \ref{maintheorem3}}}]
First of all, we know that $m(a)\leq0$ for all $a>{a}_*$.
Then, we shall suppose that $\{u_n\}\subset S_r(a)$ is a minimizing sequence
for $m(a)$ by Lemma \ref{aut1}. Exploiting Lemma \ref{aut1} again,
there exists a $u\in X$ such that $u_n\rightharpoonup u$ in $X_r$
along a subsequence.
According to Lemma \ref{attained}, the proof is accomplished if we verify that
$u\neq0$.
Finally, we are ready to deduce that $u\neq0$. Arguing it indirectly, we can assume that $u\equiv0$.
For $m(a)\leq0$, since $F_2(u_n)\to0$ in $L^1(\R^N)$ by property-$(\textbf{P}_2)$
and $u_n\to u$ in $L^s(\R^N)$ for all $s\in(p,p^*)$, then
$$
0\geq\lim_{n\to\infty}J(u_n)=\lim_{n\to\infty}\left(
\frac{1}{p}\int_{\mathbb{R}^{N}}(|\nabla u_n|^{p}+|u_n|^{p})dx+\int_{\mathbb{R}^{N}}
F_1(u_n)dx
\right)\geq \frac1pa^p>0
$$
which is impossible. Therefore, we arrive at the desired result $u\neq0$.
The positivity of $u$ is trivial, we omit it here. The proof is completed.
\end{proof}

\subsection{The $L^p$-supercritical case}\label{autonomous1}\

 In this subsection,
we are going to dispose of the Problem \eqref{mainequation4} with $p+\frac{p^2}{N}<q<p^*$
and $\mu>0$. As explained in Introduction,
we need to study Problem \eqref{mainequation5}
and so we define the minimization problem
$$
m_{\mathcal{R}}(a)=\inf_{u\in S(a)}J_{\mathcal{R}}(u).
$$
The same arguments explored in the proof of Theorem \ref{maintheorem3}
guarantee that there is $\underline{a}^*>0$ independent of $\mathcal{R}$ and $\mu$ such that $m_{\mathcal{R}}(a)\leq0$ for all
$a>\underline{a}^*$.
We would like to point out that it is possible to find such an
$\underline{a}^*>0$ since
$J_{\mathcal{R}}(t\psi)\to-\infty$ as $t\to+\infty$ uniformly in $\mathcal{R}$ and $\mu$.

The next result reveals an important estimate involving the norm in $X$ of the solutions $u_\mathcal{R}$
for the Problem \eqref{mainequation5}.

\begin{lemma} \label{estimate}
Let $2\leq p<N$, $p+\frac{p^2}{N}<q<p^*$ and $\mu>0$.
There exists  $\underline{\mu}^*=\underline{\mu}^*(\mathcal{R})>0$ such that if $\mu\in (0,\underline{\mu}^*)$,
then there is $C>0$
	independent of $\mathcal{R}$ such that the attained function $u_\mathcal{R}$ associated with $m_\mathcal{R}(a)$ satisfies
$|\nabla  u_\mathcal{R}|_p \leq C$ for all $\mathcal{R}>0$.
	
\end{lemma}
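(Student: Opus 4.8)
The plan is to turn the minimality of $u_{\mathcal{R}}$ into a closed inequality for the quantity $t_{\mathcal{R}}\triangleq|\nabla u_{\mathcal{R}}|_p$ whose right-hand side involves only exponents strictly below $p$, the divergent truncation factor $\mathcal{R}^{q-\bar q}$ being absorbed by taking $\mu$ small in terms of $\mathcal{R}$. First I would recall that, for $a>\underline{a}^*$, we already know $m_{\mathcal{R}}(a)\le0$ and that it is attained at some $u_{\mathcal{R}}\in S(a)$ by the arguments used for Theorem~\ref{maintheorem3}; in particular $|u_{\mathcal{R}}|_p=a$. Using the decomposition \eqref{decomposition} one rewrites the energy of the minimizer as
\[
m_{\mathcal{R}}(a)=J_{\mathcal{R}}(u_{\mathcal{R}})=\frac1p\,t_{\mathcal{R}}^p+\frac{a^p}{p}+\int_{\R^N}F_1(u_{\mathcal{R}})dx-\int_{\R^N}F_2(u_{\mathcal{R}})dx-\mu\int_{\R^N}F_{\mathcal{R}}(u_{\mathcal{R}})dx,
\]
and since $F_1\ge0$ by property-$(\textbf{P}_1)$, $a^p/p\ge0$ and $m_{\mathcal{R}}(a)\le0$, this yields
\[
\frac1p\,t_{\mathcal{R}}^p\le\int_{\R^N}F_2(u_{\mathcal{R}})dx+\mu\int_{\R^N}F_{\mathcal{R}}(u_{\mathcal{R}})dx.
\]

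Next I would estimate the two integrals. Fix $\tilde q\in(p,\bar p)$; integrating the bound of property-$(\textbf{P}_2)$ gives $|F_2(s)|\le \tfrac{C_{\tilde q}}{\tilde q}|s|^{\tilde q}$, while \eqref{ff} gives $|F_{\mathcal{R}}(s)|\le\tfrac{\mathcal{R}^{q-\bar q}}{\bar q}|s|^{\bar q}$ with $\bar q\in(p,\bar p)$. Applying the Gagliardo--Nirenberg inequality \eqref{GN} to $u_{\mathcal{R}}\in S(a)$ (so that $|u_{\mathcal{R}}|_p=a$) and recalling from \eqref{GN2} that $\tilde q,\bar q<\bar p=p+\tfrac{p^2}N$ force $\tilde q\beta_{\tilde q}<p$ and $\bar q\beta_{\bar q}<p$, one arrives at
\[
\frac1p\,t_{\mathcal{R}}^p\le C_1\,t_{\mathcal{R}}^{\tilde q\beta_{\tilde q}}+\mu\,\mathcal{R}^{q-\bar q}\,C_2\,t_{\mathcal{R}}^{\bar q\beta_{\bar q}},
\]
where $C_1=\tfrac{C_{\tilde q}}{\tilde q}\mathbb{C}_{N,p,\tilde q}^{\tilde q}a^{\tilde q(1-\beta_{\tilde q})}$ and $C_2=\tfrac{1}{\bar q}\mathbb{C}_{N,p,\bar q}^{\bar q}a^{\bar q(1-\beta_{\bar q})}$ depend only on $N,p,\tilde q,\bar q,a$, hence are independent of $\mathcal{R}$ and $\mu$.

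Finally I would set $\underline{\mu}^*=\underline{\mu}^*(\mathcal{R})\triangleq\mathcal{R}^{\bar q-q}>0$, so that any $\mu\in(0,\underline{\mu}^*)$ forces $\mu\,\mathcal{R}^{q-\bar q}<1$ and the previous inequality becomes $\tfrac1p\,t_{\mathcal{R}}^p\le C_1\,t_{\mathcal{R}}^{\tilde q\beta_{\tilde q}}+C_2\,t_{\mathcal{R}}^{\bar q\beta_{\bar q}}$ with $C_1,C_2$ independent of $\mathcal{R},\mu$ and both exponents strictly below $p$; an elementary argument then bounds $t_{\mathcal{R}}$ by a constant $C>0$ depending only on $C_1,C_2,p,\tilde q\beta_{\tilde q},\bar q\beta_{\bar q}$, which is the asserted uniform estimate. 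The computation is routine; the only point needing care is the bookkeeping of constants — ensuring that the sole quantity allowed to depend on $\mathcal{R}$ is the threshold $\underline{\mu}^*$ — together with the observation that the factor $\mathcal{R}^{q-\bar q}$ produced by the truncation $f_{\mathcal{R}}$ is exactly what the smallness of $\mu$ is designed to neutralize. In particular no compactness is used here: the estimate rests only on $m_{\mathcal{R}}(a)\le0$, the positivity of $F_1$, and the $L^p$-subcritical growth (exponents $\tilde q,\bar q<\bar p$) of $F_2$ and, for fixed $\mathcal{R}$, of $f_{\mathcal{R}}$.
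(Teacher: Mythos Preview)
Your proposal is correct and follows essentially the same approach as the paper: both use $m_{\mathcal{R}}(a)\le 0$, the positivity of $F_1$, the growth bounds on $F_2$ and $F_{\mathcal{R}}$ combined with Gagliardo--Nirenberg, and then choose $\underline{\mu}^*=\mathcal{R}^{\bar q-q}$ to absorb the factor $\mathcal{R}^{q-\bar q}$. The only cosmetic difference is that the paper takes $\tilde q=\bar q$ so that both terms carry the single exponent $\bar q\beta_{\bar q}$ and then concludes via Young's inequality, whereas you keep two separate subcritical exponents and invoke the elementary fact that $t^p\le C_1 t^{\alpha_1}+C_2 t^{\alpha_2}$ with $\alpha_1,\alpha_2<p$ forces $t$ bounded.
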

\begin{proof}
	Arguing as in Lemma \ref{realnumber}, we see that \eqref{GN} combined with \eqref{ff}
and Property-$(\textbf{P}_2)$ with $\tilde{q}=\bar{q}$ gives
$$J_\mathcal{R}(u)  \geq \frac{1}{p}\int_{\R^N} |\nabla u| ^pdx
- (1+\mu\mathcal{R}^{q-\bar{q}})C_{\bar{q}}\mathbb{C}_{N,p,\bar{q}}a^{\bar{q}(1-\beta_{\bar{q}})}\left(
\int_{\mathbb{R}^{N}} |\nabla u |^{p}dx\right)^{\frac{\bar{q}\beta_{\bar{q}}}{p}},~\forall u \in S(a),
$$
where $\bar{q}\in\left(p,p+\frac{p^2}{N}\right)$.
Fixing $\underline{\mu}^*=\underline{\mu}^*(\mathcal{R})=\frac{1}{\mathcal{R}^{p-\bar{q}}}$, then for all $\mu\in (0,\underline{\mu}^*)$, one gets
$$
  J_\mathcal{R}(u) \geq \frac{1}{p}\int_{\R^N} |\nabla u| ^pdx
 -2 C_{\bar{q}}\mathbb{C}_{N,p,\bar{q}}a^{\bar{q}(1-\beta_{\bar{q}})}\left(
\int_{\mathbb{R}^{N}} |\nabla u |^{p}dx\right)^{\frac{\bar{q}\beta_{\bar{q}}}{p}},~\forall u \in S(a).
$$
Due to $\beta_{\bar{q}} \bar{q} <p$, exploiting the Young's inequality,
there is a constant $C_1>0$ independent of $\mathcal{R}$ such that
	$$
	\left(
\int_{\mathbb{R}^{N}} |\nabla u |^{p}dx\right)^{\frac{\bar{q}\beta_{\bar{q}}}{p}} \leq C_1+ \frac{1}{
4pC_{\bar{q}}\mathbb{C}_{N,p,\bar{q}}a^{\bar{q}(1-\beta_{\bar{q}})}}
\int_{\R^N} |\nabla u| ^pdx, ~\forall u \in X.
	$$
	Hence, there is a constant $C_2>0$ independent of $\mathcal{R}$ such that
	$$
	|\nabla u |_p^{p} \leq 2pJ_\mathcal{R}(u) +C_2, \quad \forall a>\underline{a}^*,~
 \mu\in (0,\underline{\mu}^*), ~\mathcal{R}>0~ \mbox{and} ~u \in S(a).
	$$
Since $J_\mathcal{R}(u_\mathcal{R})=m_\mathcal{R}(a)\leq0$ for all $a>\underline{a}^*$,
we are able to derive the desired result and so the
 proof is completed.
\end{proof}

\begin{proof}[\emph{\textbf{Proof of Theorem \ref{maintheorem4}}}]
By Corollary \ref{corollary}, there are $\underline{a}^*>0$ (independent of $\mathcal{R}$ and
$\mu$) and $\underline{\mu}^*$ such that, for all fixed $a>\underline{a}^*$ and $\mu\in(0,\underline{\mu}^*)$, the couple
 $(u^*_\mathcal{R},\lambda^*_\mathcal{R})\in S_r(a)\times\R$ is a solution of the problem
\begin{align*}
	\left\{
	\begin{aligned}
		&-  \Delta_p u =\lambda |u|^{p-2}u+|u|^{p-2}u\log|u|^p+\mu f_{\mathcal{R}}(u), \hbox{ in } \R^N,\\
		&u(x)>0   \hbox{ in }  \R ^N.\\
	\end{aligned}
	\right.
\end{align*}
Since $\mu\in (0,\underline{\mu}^*)$, the definition of $f_\mathcal{R}$ together with \eqref{ff} leads to
$$
0 \leq \mu f_\mathcal{R}(t) \leq  t^{\bar{q}-1},~\forall t \geq 0 ~\mbox{and}~ \mathcal{R}>0.
$$
As a consequence, $\{u_\mathcal{R}\}$ is bounded in $L^{s}(\R^N)$ for all $\mathcal{R}>0$ and $s \in (p,p^*)$
by Lemma \ref{estimate} and $\{\lambda_\mathcal{R}\}$ is bounded for all $\mathcal{R}>0$. Proceeding as the Step 1 in the proof of
Theorem \ref{Pohozaev}, there is a constant $M\in(0,+\infty)$ that does not depend upon $\mathcal{R}>0$ satisfying
$$
|u_\mathcal{R}|_\infty \leq M, ~\forall \mathcal{R}>0.
$$
Let us fix $\mathcal{R}>M$, then we know that the couple $(u_\mathcal{R}^*,\lambda_\mathcal{R}^*)\in X\times\R$ is weak solution for the
Problem \eqref{mainequation4} if $a>\underline{a}^*$ and $\mu\in (0, \underline{\mu}^*)$. This finishes the proof of Theorem \ref{maintheorem4}.
\end{proof}

\section{Final comments}\label{comments}

Although all of the main results in this article are derived,
as far as we are concerned,
there are some other interesting questions
worth further exploration.

On the one hand, one may naturally wonder that
whether the Problems \eqref{mainequation1}-\eqref{mainequation1a}
admit a ground state solution.
To find the ground state, it suffices to study the existence of ground state solutions
for Problem \eqref{mainequation2}.
We say that $u_0\in X$ is a ground state solution for Problem \eqref{mainequation2}
provided that
\begin{equation*}
I_\varepsilon'(u_0)|_{S(a)}=0~\text{and}~I_\varepsilon(u_0)=\inf\{I_\varepsilon(u):
I_\varepsilon'(u)|_{S(a)}=0~\text{and}~u\in S(a)\},
\end{equation*}
where the variational functional $I_\varepsilon:X\to\R$
defined by \eqref{Ifunctional}. Although we cannot give an affirmative answer to the above question at present,
it will be
  exhibited as a theorem below.

\begin{theorem}\label{comments1}
Let $2\leq p<N$
and $(\hat{V}_1)-(V_2)$.
Suppose additionally that the potential $V$ has no
other strict global minimum points than $\{x^1,x^2,\cdots,x^l\}$.
 Then, there exist ${\hat{a}}^*>0$ and ${\hat{\varepsilon}}^*>0$ such that
 we can  derive at least one ground state solution for \eqref{mainequation1}-\eqref{mainequation1a} among the solutions obtained by Theorem
 \ref{maintheorem2} for all $a>{\hat{a}}^*$ and $\varepsilon\in(0,{\hat{\varepsilon}}^*)$.
\end{theorem}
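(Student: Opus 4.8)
By the reductions of Section \ref{semiclassical}, it suffices to single out, among the $l$ couples $(v_\varepsilon^j,\lambda_\varepsilon^j)\in X\times\R$ furnished by Proposition \ref{existence} for Problem \eqref{mainequation2}, one that realises the ground state level
$$
c_\varepsilon=\inf\bigl\{I_\varepsilon(u):u\in S(a),\ I_\varepsilon'(u)|_{S(a)}=0\bigr\},
$$
with $I_\varepsilon$ as in \eqref{Ifunctional}. Each $v_\varepsilon^j$ is a constrained critical point with $I_\varepsilon(v_\varepsilon^j)=\beta_\varepsilon^j$, so $c_\varepsilon\le\min_{1\le j\le l}\beta_\varepsilon^j$. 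Conversely, writing $I_\varepsilon(u)=\frac1p\int_{\R^N}|\nabla u|^p dx+\frac1p\int_{\R^N}(V(\varepsilon x)+1)|u|^p dx-\frac1p\int_{\R^N}|u|^p\log|u|^p dx$ via \eqref{decomposition} and using $V(\varepsilon x)\ge V_0$, every constrained critical point $u\in S(a)$ satisfies $I_\varepsilon(u)\ge I_0(u)\ge\mathcal{I}_{0,a}$; hence
$$
\mathcal{I}_{0,a}\le c_\varepsilon\le\min_{1\le j\le l}\beta_\varepsilon^j .
$$

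The plan is to make this chain collapse for $\varepsilon$ small. First I would record that $\beta_\varepsilon^j\to\mathcal{I}_{0,a}$ as $\varepsilon\to0^+$ for each $j$: the bound $\beta_\varepsilon^j\ge\mathcal{I}_{0,a}$ is the previous inequality applied to $v_\varepsilon^j$, while $\limsup_{\varepsilon\to0^+}\beta_\varepsilon^j\le\limsup_{\varepsilon\to0^+}I_\varepsilon(\hat u_\varepsilon^j)=\mathcal{I}_{0,a}$ follows from \eqref{9} (recall $\hat u_\varepsilon^j\in\theta_\varepsilon^j$ for $\varepsilon$ small, as in the proof of Lemma \ref{PS2}). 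Therefore $\min_{1\le j\le l}\beta_\varepsilon^j\to\mathcal{I}_{0,a}$, and so $c_\varepsilon\to\mathcal{I}_{0,a}$; consequently there is $\hat\varepsilon^*\in(0,\varepsilon^*]$ with
$$
c_\varepsilon\le\min_{1\le j\le l}\beta_\varepsilon^j<\mathcal{I}_{0,a}+\delta^*,\qquad\forall\,\varepsilon\in(0,\hat\varepsilon^*),
$$
$\delta^*>0$ being the threshold of Lemma \ref{PS1}. We also put $\hat a^*:=\tilde a$.

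Now fix $\varepsilon\in(0,\hat\varepsilon^*)$ and argue by contradiction: suppose $c_\varepsilon<\min_{1\le j\le l}\beta_\varepsilon^j$. By the very definition of $c_\varepsilon$ there is a solution $w\in S(a)$ of Problem \eqref{mainequation2} with $I_\varepsilon(w)<\min_{1\le j\le l}\beta_\varepsilon^j<\mathcal{I}_{0,a}+\delta^*$. Since $w\in S(a)$, Lemma \ref{PS1} gives $Q_\varepsilon(w)\in K_{\rho_0/2}=\bigcup_{i=1}^{l}\overline{B_{\rho_0/2}(x^i)}$, and, $\{x^1,\dots,x^l\}$ being (by $(V_2)$ together with the additional hypothesis) precisely the strict global minima of $V$, the closed balls $\overline{B_{\rho_0/2}(x^i)}$ are pairwise disjoint, so $|Q_\varepsilon(w)-x^{j_0}|\le\rho_0/2<\rho_0$ for a single $j_0\in\{1,\dots,l\}$, i.e. $w\in\theta_\varepsilon^{j_0}$. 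But then
$$
I_\varepsilon(w)\ \ge\ \inf_{u\in\theta_\varepsilon^{j_0}}I_\varepsilon(u)\ =\ \beta_\varepsilon^{j_0}\ \ge\ \min_{1\le j\le l}\beta_\varepsilon^j\ >\ I_\varepsilon(w),
$$
which is absurd. Hence $c_\varepsilon=\min_{1\le j\le l}\beta_\varepsilon^j=\beta_\varepsilon^{j_1}$ for some $j_1$, so $(v_\varepsilon^{j_1},\lambda_\varepsilon^{j_1})$ attains $c_\varepsilon$; undoing the scalings, the corresponding couple $(u_\varepsilon^{j_1},\lambda_\varepsilon^{j_1})$ — one of the $l$ couples of Theorem \ref{maintheorem2}, positive, with $\lambda_\varepsilon^{j_1}<0$ and the exponential decay of Lemma \ref{decay} — is a ground state solution of \eqref{mainequation1}-\eqref{mainequation1a} whenever $a>\hat a^*$ and $\varepsilon\in(0,\hat\varepsilon^*)$.

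The whole difficulty is thus pushed onto the concentration statement of Lemma \ref{PS1}, which is what lets the putative ground state $w$ be trapped in some $\theta_\varepsilon^{j_0}$ and is where the hypothesis on the global minima of $V$ enters; granted that, the comparison $I_\varepsilon(w)\ge\beta_\varepsilon^{j_0}$ is immediate. A route that dispenses with the contradiction step is to observe that a minimizing sequence $\{w_n\}$ of solutions for $c_\varepsilon$ is a $(PS)_{c_\varepsilon}$ sequence for $I_\varepsilon|_{S(a)}$ with $c_\varepsilon<\mathcal{I}_{0,a}+\Upsilon$ (Lemma \ref{PS2}), so that Theorem \ref{PScondition} produces a strong limit $w\in S(a)$ attaining $c_\varepsilon$, to which the same trapping argument applies.
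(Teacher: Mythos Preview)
The paper does not prove Theorem \ref{comments1}. In Section \ref{comments} the authors explicitly state that they ``cannot give an affirmative answer to the above question at present'' and the subsequent Remark begins ``If Theorem \ref{comments1} could be proved successfully\ldots''. So there is no proof in the paper to compare against; your argument, if sound, would settle the question the authors leave open.

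Your argument is correct. The decisive observation is that Lemma \ref{PS1} applies to \emph{any} $u\in S(a)$ with $I_\varepsilon(u)\le\mathcal{I}_{0,a}+\delta^*$, not only to the particular sequences built in Section \ref{semiclassical}. Hence every constrained critical point $w$ with energy below that threshold is trapped in some $\theta_\varepsilon^{j_0}$, giving $I_\varepsilon(w)\ge\beta_\varepsilon^{j_0}\ge\min_j\beta_\varepsilon^j$; combined with the trivial bound $c_\varepsilon\le\min_j\beta_\varepsilon^j$ and the fact (already in the proof of Lemma \ref{PS2}, via \eqref{9}) that $\beta_\varepsilon^j<\mathcal{I}_{0,a}+\tfrac14\delta^*$ for all $\varepsilon\in(0,\varepsilon^*)$, this forces $c_\varepsilon=\min_j\beta_\varepsilon^j$, attained by some $v_\varepsilon^{j_1}$.

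Two minor points. First, your invocation of the ``additional hypothesis'' is misplaced: the pairwise disjointness of the balls $\overline{B_{\rho_0/2}(x^i)}$ is fixed right after Corollary \ref{limitcorollary} from $(V_2)$ alone (the $x^i$ are finitely many distinct points), so your proof never actually uses the extra assumption on strict minima---which suggests the hypothesis may be superfluous. Second, no further shrinking of $\varepsilon^*$ is needed: the inequality $\min_j\beta_\varepsilon^j<\mathcal{I}_{0,a}+\delta^*$ already holds on $(0,\varepsilon^*)$ by the proof of Lemma \ref{PS2}, so one may simply take $\hat\varepsilon^*=\varepsilon^*$ and $\hat a^*=\tilde a$.
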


\begin{remark} If Theorem \ref{comments1} could be proved successfully, one can observe that
the ground state solution
 inherits the properties of concentrating behavior and
exponential decay stated in Theorem \ref{maintheorem2}. Moreover, the positive parameters
${\hat{a}}^*$ is larger than $a^*$ and ${\hat{\varepsilon}}^*$ is smaller than $\varepsilon^*$,
respectively.
\end{remark}

On the other hand,
it would be interesting to handle the existence of solutions
for Problem \eqref{mainequation4} perturbed by
a mass-critical nonlinearity.
More precisely, let us
consider the problem below
\begin{equation}\label{mainequation6}
 \left\{
   \begin{array}{ll}
   \displaystyle -  \Delta_p u =\lambda |u|^{p-2}u+|u|^{p-2}u\log|u|^p+\mu|u|^{\bar{p}-2}u
~\text{in}~\R^N, \\
    \displaystyle     \int_{\R^N}|u|^pdx=a^p.
   \end{array}
 \right.
\end{equation}
where $\mu>0$ is a parameter and $\bar{p}=p+\frac{p^2}{N}$.
It seems difficult to construct a nontrivial solution for Problem \eqref{mainequation6} in our $p$-Laplacian setting so far
because we require the sufficiently small mass $a$ to make sure that the variational functional
is coercive and bounded from below on $S(a)$.
Whereas, we cannot prove that $m(a)\leq0$ in this situation
and it is the essential difference from the classic $p$-Laplacian problems,
namely the logarithmic nonlinearity vanishes in Problem \eqref{mainequation6}.
We conjecture that \eqref{mainequation6} could be solved if one chooses a suitable work
space. Speaking it clearly, motivated by \cite{Cazenave},
there may exist a work space $Z\subset W^{1,p}(\R^N)$
such that $\int_{\R^N}|u|^p|\log |u|^p|dx<\infty$ for all $u\in Z$
and the imbedding $Z\hookrightarrow L^p(\R^N)$ is compact.
If it holds true, one would generalize all the main results in
\cite{SY} to the $p$-Laplacian settings.
What's more, the remained case $1<p<2$ for the results in this paper
would be supplemented.

\appendix
\section{Some technical stuff}

\noindent\textbf{Lemma A.1.} Let $\Omega\subset\R^N$ be an open set,
$\mathcal{L}:\Omega\times\R\times \R^N
\to\R$ is a $C^1$
function and $f\in L^\infty_{\text{loc}}(\Omega)$. If
$\xi\mapsto \mathcal{L}(x,s,\xi)$ is strictly convex for each $(x,s)\in\Omega\times\R$
and $u:\Omega\to\R$ is a locally Lipschitz solution of
\[
-\text{div}\{\nabla_\xi \mathcal{L}(x,u,\nabla u)\}+D_s\mathcal{L}(x,u,\nabla u)=f~\text{in}~\mathcal{D}^\prime(\Omega).
\]
Then
\begin{equation}\label{Appendix1}
\begin{gathered}
\sum_{i,j=1}^N\int_{\Omega}D_ih_j D_{\xi_i}\mathcal{L}(x,u,\nabla u)D_judx
-\int_{\R^N}\big[(\text{div}h)\mathcal{L}(x,u,\nabla u)+h\cdot \nabla_x\mathcal{L}(x,u,\nabla u)\big]dx\hfill\\
\ \ \ \ \ \ \ \  =\int_{\R^N} (h\cdot \nabla u)f(u)dx,~\forall  h\in \mathcal{C}_c^1(\Omega,\R^N). \hfill\\
\end{gathered}
\end{equation}
\vskip3mm
\noindent\textbf{Lemma A.2.}
 Let $(\bar{v}^j_\varepsilon,\lambda^j_\varepsilon)\in X\times\R$
be a couple of weak solution to the Problem
 \eqref{decay1},
 then $$
 |\bar{v}_\varepsilon^j|_\infty\geq \rho^j~\text{and}
 ~\lim_{|x|\to+\infty}\bar{v}_\varepsilon^j(x)=0~\text{uniformly in}~\varepsilon\in(0,\varepsilon^*).
 $$
where $\rho^j>0$ is independent of $\varepsilon\in(0,\varepsilon^*)$.

 \begin{proof}
If the first part is false, we suppose that $|\bar{v}_\varepsilon^j|_\infty\to0$
as $\varepsilon\to0^+$ in the sense of a subsequence. Then, it is simple to
verify that $\bar{v}_\varepsilon^j\to0$ in $X$
which is absurd and thus we just show the second part in detail.
For every $R>0$ and $0<r\leqslant\frac{R}{2}$,
we choose a cutoff function $\eta\in \mathcal{C}_0^{\infty}(\mathbb{R}^{N},[0,1])$
such that $\eta(x)=1$ if $|x|\geqslant R$, and $\eta(x)=0$ if
$|x|\leqslant R-r$ as well as $|\nabla\eta|\leqslant\frac{2}{r}$.
Given $\varepsilon\in(0,\varepsilon^*)$ and $L>1$, define
$$
\left.\bar{v}_{\varepsilon,L}^j(x)
=\left\{\begin{array}{ll}\bar{v}_{\varepsilon}^j(x),&\bar{v}_{\varepsilon}^j(x)\leqslant L,\\
L,&\bar{v}_{\varepsilon}^j(x)\geqslant L,\end{array}\right.\right.
$$
and
$$
\bar{z}^j_{\varepsilon,L}=\eta^{p}(\bar{v}_{\varepsilon,L}^j)^{p(\vartheta-1)}\bar{v}^j_{\varepsilon}~\mathrm{and}~
\bar{w}^j_{\varepsilon,L}=\eta \bar{v}^j_{\varepsilon}(\bar{v}_{\varepsilon,L}^j)^{\vartheta-1}
$$
 with $\vartheta>1$ to be determined later. Taking $\bar{z}_{\varepsilon,L}^j$ as a test function
 in \eqref{decay1}, we obtain
 \begin{align*}
 \int_{\mathbb{R}^{N}}\eta^{p}(\bar{v}_{\varepsilon,L}^j)^{p(\vartheta-1)}|\nabla \bar{v}^j_{\varepsilon}|^{p}dx &
  =-p(\vartheta-1)\int_{\mathbb{R}^{N}}(\bar{v}_{\varepsilon,L}^j)^{p\vartheta-p-1}\eta^{p}\bar{v}^j_{\varepsilon}|\nabla
\bar{v}^j_{\varepsilon}|^{p-2}\nabla \bar{v}^j_{\varepsilon}\nabla \bar{v}_{\varepsilon,L}^j  dx\\
  &\ \ \ \ -p\int_{\mathbb{R}^{N}}\eta^{p-1}(\bar{v}_{\varepsilon,L}^j)^{p(\vartheta-1)}
  \bar{v}_{\varepsilon}^j|\nabla\bar{v}_{\varepsilon}^j|^{p-2}\nabla\bar{v}_{\varepsilon}^j\nabla\eta dx\\
  &\ \ \ \ +\int_{\mathbb{R}^{N}}f(\bar{v}_{\varepsilon}^j)\eta^{p}(\bar{v}_{\varepsilon,L}^j)^{p(\vartheta-1)}\bar{v}^j_{\varepsilon}dx
  -\int_{\mathbb{R}^{N}}
V_\varepsilon(x)|\bar{v}_{\varepsilon}^j|^{p}\eta^{p}
(\bar{v}_{\varepsilon,L}^j)^{p(\vartheta-1)}dx,
\end{align*}
 where $V_\varepsilon(x)=V(\varepsilon x+ \varepsilon x_\varepsilon^j)$ and
 $$
 f(\bar{v}_{\varepsilon}^j)=\lambda_\varepsilon^j |\bar{v}^j_\varepsilon|^{p-2}\bar{v}^j_\varepsilon+
   |\bar{v}^j_\varepsilon|^{p-2}\bar{v}^j_\varepsilon\log|\bar{v}^j_\varepsilon|^p
   =(\lambda_\varepsilon^j-1) |\bar{v}^j_\varepsilon|^{p-2}\bar{v}^j_\varepsilon+F_2^\prime(\bar{v}^j_\varepsilon)-
   F_1^\prime(\bar{v}^j_\varepsilon).
 $$
It follows from property-$(\textbf{P}_1)$ and property-$(\textbf{P}_2)$ with $\tilde{q}\in(p,p^*)$ that
$$
 f(\bar{v}_{\varepsilon}^j)\bar{v}_{\varepsilon}^j
   \leq  (\lambda_\varepsilon^j -1) |\bar{v}^j_\varepsilon|^{p } +pC_{\tilde{q}}|\bar{v}^j_\varepsilon|^{\tilde{q}}
$$
which indicates that
$$
\begin{gathered}
\int_{\mathbb{R}^{N}}\eta^{p}(\bar{v}_{\varepsilon,L}^j)^{p(\vartheta-1)}|\nabla \bar{v}^j_{\varepsilon}|^{p}dx
  \leq
  p\int_{\mathbb{R}^{N}}\eta^{p-1}(\bar{v}_{\varepsilon,L}^j)^{p(\vartheta-1)}
  \bar{v}_{\varepsilon}^j|\nabla\bar{v}_{\varepsilon}^j|^{p-1}|\nabla\eta| dx\hfill\\
 \ \ \ \ +pC_{\tilde{q}}\int_{\mathbb{R}^{N}}\eta^{p}(\bar{v}_{\varepsilon,L}^j)^{p(\vartheta-1)}\bar{v}^j_{\varepsilon}
  |\bar{v}^j_\varepsilon|^{\tilde{q}}dx
  +(\lambda_\varepsilon^j -1-V_0)\int_{\mathbb{R}^{N}}
|\bar{v}_{\varepsilon}^j|^{p}\eta^{p}
(\bar{v}_{\varepsilon,L}^j)^{p(\vartheta-1)}dx. \hfill\\
\end{gathered}
$$
Using the Young's inequality, it holds that
$$\begin{gathered}
 \int_{\mathbb{R}^{N}}\left[\eta^{p}(\bar{v}_{\varepsilon,L}^j)^{p(\vartheta-1)}|\nabla \bar{v}^j_{\varepsilon}|^{p}
-(\lambda_\varepsilon^j -1-V_0)|\bar{v}_{\varepsilon}^j|^{p}\eta^{p}
(\bar{v}_{\varepsilon,L}^j)^{p(\vartheta-1)} \right]dx
\hfill\\
   \ \ \ \
\leq  C_p\int_{\mathbb{R}^{N}}(\bar{v}_{\varepsilon,L}^j)^{p(\vartheta-1)}
 | \bar{v}_{\varepsilon}^j|^p|\nabla\eta|^p dx+pC_pC_{\tilde{q}}\int_{\mathbb{R}^{N}}\eta^{p}
 (\bar{v}_{\varepsilon,L}^j)^{p(\vartheta-1)}
  |\bar{v}^j_\varepsilon|^{\tilde{q}}dx\hfill\\
\end{gathered}
$$
In view of the proof of Theorem \ref{PScondition}, we obtain that $\lambda_\varepsilon^j\leq (\lambda^*)^j<0$
for all $\varepsilon\in(0,\varepsilon^*)$. Exploiting $(\hat{V}_1)$,
there holds $V_0+1\geq0$. Moreover, some simple calculations show that
$$
|\nabla\bar{w}^j_{\varepsilon,L}|^p\leq C_p\vartheta^p\left(
\eta^{p}(\bar{v}_{\varepsilon,L}^j)^{p(\vartheta-1)}|\nabla \bar{v}^j_{\varepsilon}|^{p}
+|\nabla\eta|^{p}(\bar{v}_{\varepsilon,L}^j)^{p(\vartheta-1)}|\bar{v}^j_{\varepsilon}|^{p}\right).
$$
The above facts together with the Sobolev inequality imply that
\begin{align*}
  \left(\int_{\mathbb{R}^{N}}|\bar{w}^j_{\varepsilon,L}|^{p^*}dx\right)^{\frac{p}{p^*}} &
  \leq \tilde{C}_p\vartheta^p\left(
\int_{\mathbb{R}^{N}}(\bar{v}_{\varepsilon,L}^j)^{p(\vartheta-1)}
 | \bar{v}_{\varepsilon}^j|^p|\nabla\eta|^p dx+\int_{\mathbb{R}^{N}}\eta^{p}
 (\bar{v}_{\varepsilon,L}^j)^{p(\vartheta-1)}
  |\bar{v}^j_\varepsilon|^{\tilde{q}}dx
\right)\\
    & \leq
    \tilde{C}_{p,r}\vartheta^p\left(
\int_{R-r\leq|x|\leq R}
 | \bar{v}_{\varepsilon}^j|^{p\vartheta} dx+\int_{|x|\geq R-r}
 (\bar{v}_{\varepsilon}^j)^{p(\vartheta-1)}
  |\bar{v}^j_\varepsilon|^{\tilde{q}}dx
\right).
\end{align*}
Hereafter, we shall fix $t=\sqrt{r}$, $p^*>\frac{pt}{t-1}$
and $\chi=\frac{p^*(t-1)}{pt}>1$. As a consequence,
\begin{align*}
  \left(\int_{\mathbb{R}^{N}}|\bar{w}^j_{\varepsilon,L}|^{p^*}dx\right)^{\frac{p}{p^*}} &
  \leq \tilde{C}_p\vartheta^p\Bigg\{  \left(
\int_{R-r\leq|x|\leq R}
 | \bar{v}_{\varepsilon}^j|^{\frac{p\vartheta t}{t-1}} dx
  \right)^{\frac{t-1}{t}}  \left(  \int_{R-r\leq|x|\leq R}dx \right)^{\frac{1}{t}}\\
  &\ \ \ \ \ \ \ \  \ \  +
  \left(
\int_{|x|\geq R-r}
 | \bar{v}_{\varepsilon}^j|^{\frac{p\vartheta t}{t-1}} dx
  \right)^{\frac{t-1}{t}}  \left(  \int_{|x|\geq R-r}| \bar{v}_{\varepsilon}^j|^{(\tilde{q}-p)t}dx \right)^{\frac{1}{t}}
 \Bigg\}
\end{align*}
Choosing $\tilde{q}=\frac{p(1+t)}{t}$ and applying the Sobolev inequality, we must have that
$$
 \left(\int_{\mathbb{R}^{N}}|\bar{w}^j_{\varepsilon,L}|^{p^*}dx\right)^{\frac{p}{p^*}}
  \leq \tilde{C}_{p,a}\vartheta^p
\left(
\int_{|x|\geq R-r}
 | \bar{v}_{\varepsilon}^j|^{\frac{p\vartheta t}{t-1}} dx
  \right)^{\frac{t-1}{t}}.
$$
From which, since $\bar{w}^j_{\varepsilon,L}=\eta \bar{v}^j_{\varepsilon}(\bar{v}_{\varepsilon,L}^j)^{\vartheta-1}$,
 we can derive that
$$\begin{gathered}
\left(\int_{|x|\geq R}|\bar{v}^j_{\varepsilon,L}|^{p^*\vartheta}dx\right)^{\frac{p}{p^*}}
\leq\left(\int_{|x|\geq R}\eta^{p^*}|\bar{v}^j_{\varepsilon}|^{p^*}
|\bar{v}^j_{\varepsilon,L}|^{p^*(\vartheta-1)}dx\right)^{\frac{p}{p^*}} \hfill\\
\ \ \ \  \leq
\left(\int_{\mathbb{R}^{N}}|\bar{w}^j_{\varepsilon,L}|^{p^*}dx\right)^{\frac{p}{p^*}}
  \leq \tilde{C}_{p,a}\vartheta^p
\left(
\int_{|x|\geq R-r}
 | \bar{v}_{\varepsilon}^j|^{\frac{p\vartheta t}{t-1}} dx
  \right)^{\frac{t-1}{t}}. \hfill\\
\end{gathered}$$
Letting $L\to+\infty$ in the above inequality, there holds
$$
\left(\int_{|x|\geq R}|\bar{v}^j_{\varepsilon}|^{p^*\vartheta}dx\right)^{\frac{p}{p^*}}
\leq \tilde{C}_{p,a}\vartheta^p
\left(
\int_{|x|\geq R-r}
 | \bar{v}_{\varepsilon}^j|^{\frac{p\vartheta t}{t-1}} dx
  \right)^{\frac{t-1}{t}}. $$
Setting $\chi=\frac{p^*(t-1)}{pt}$ and $s=\frac{pt}{t-1}$,
we are derived from the above inequality that
 $$
|\bar{v}^j_{\varepsilon}|_{\chi^{m+1}{s(|x|\geqslant R)}}\leqslant
\tilde{C}_{p,a}^{\sum_{i=1}^{m}\chi^{-i}}\chi^{\sum_{i=1}^{m}i\chi^{-i}}|\bar{v}^j_{\varepsilon}|_{p^{*}(|x|\geqslant R-r)}
$$
and so
$$
 |\bar{v}^j_{\varepsilon} |_{L^\infty(|x|\geqslant R)}\leqslant
 \tilde{C}_{p,a}^{\sum_{i=1}^{m}\chi^{-i}}\chi^{\sum_{i=1}^{m}i\chi^{-i}}|\bar{v}^j_{\varepsilon}|_{p^*(|x|\geqslant R-r)}.
$$
Since $\bar{v}^j_{\varepsilon}\to\bar{v}^j$ in $X$, the last inequality completes the proof.
 \end{proof}

\noindent\textbf{Lemma A.3.}
Let $(\bar{v}^j_\varepsilon,\lambda^j_\varepsilon)\in X\times\R$
be a couple of weak solution to the Problem
 \eqref{decay1}, then there are $C_{0}^j,c_{0}^j>0$ such that
$$
\bar{v}^j_\varepsilon\leq C^j_{0}\exp(-c^j_{0}|x|)
$$
 for all $\varepsilon\in(0,\varepsilon^*)$ and $x\in\R^N$.

\begin{proof}
Since we have derived that $\lambda_\varepsilon^j\to \lambda^j$ as $\varepsilon\to0^+$
and $\bar{v}^j_\varepsilon>0$ for every $x\in\R^N$,
we apply Lemma A.2 to deduce that
$$
\lim_{|x|\to+\infty}\frac{\lambda_\varepsilon^j |\bar{v}^j_\varepsilon|^{p-2}\bar{v}^j_\varepsilon+
   |\bar{v}^j_\varepsilon|^{p-2}\bar{v}^j_\varepsilon\log|\bar{v}^j_\varepsilon|^p}{|\bar{v}^j_\varepsilon|^{p-1}}
   =-\infty~\text{uniformly in}~\varepsilon\in(0,\varepsilon^*).
$$
So, there is an $R>0$ which is independent of $\varepsilon\in(0,\varepsilon^*)$ such that
$$
\lambda_\varepsilon^j |\bar{v}^j_\varepsilon|^{p-2}\bar{v}^j_\varepsilon+
   |\bar{v}^j_\varepsilon|^{p-2}\bar{v}^j_\varepsilon\log|\bar{v}^j_\varepsilon|^p
   \leq \frac{V_0-2}{2}|\bar{v}^j_\varepsilon|^{p-1},~\forall \varepsilon\in(0,\varepsilon^*)~ \text{and}~|x|\geq R.
$$
Denoting the constant $\hat{V}_0=V_0+2\in[1,+\infty)$, then for all $|x|\geq R$, there holds
\begin{align*}
-  \Delta_p \bar{v}^j_\varepsilon+\frac{\hat{V}_0}{2}|\bar{v}^j_\varepsilon|^{p-2}\bar{v}^j_\varepsilon &
=\lambda_\varepsilon^j |\bar{v}^j_\varepsilon|^{p-2}\bar{v}^j_\varepsilon+
   |\bar{v}^j_\varepsilon|^{p-2}\bar{v}^j_\varepsilon\log|\bar{v}^j_\varepsilon|^p-
 \left[V(\varepsilon x+ \varepsilon x_\varepsilon^j)-\frac{V_0+2}{2}\right]
   |\bar{v}^j_\varepsilon|^{p-2}\bar{v}^j_\varepsilon \\
       & \leq \lambda_\varepsilon^j |\bar{v}^j_\varepsilon|^{p-2}\bar{v}^j_\varepsilon+
   |\bar{v}^j_\varepsilon|^{p-2}\bar{v}^j_\varepsilon\log|\bar{v}^j_\varepsilon|^p-
 \frac{V_0-2}{2}
   |\bar{v}^j_\varepsilon|^{p-2}\bar{v}^j_\varepsilon\\
   &\leq0.
   \end{align*}
   Let $\psi^j(x)=C_{0}^j\exp(-c_{0}^j|x|)$
   with $C_{0}^j,c_{0}^j>0$ such that $(c^j_{0})^{p}(p-1)<\frac{\hat{V}}{2}$
   and $\bar{v}^j_\varepsilon(x)\leq C_{0}^j\exp(-c_{0}^jR)$ for all $|x|=R$.
   It follows from some simple calculations that
$$
 -\Delta_{p}\psi^j +\frac{\hat{V} }{2} (\psi^j)^{p-1}
 =(\psi^j)^{p-1}\left(\frac{\hat{V} }{2}-(c^j_{0})^{P}(p-1)+\frac{N-1}{|x|}(c^j_{0})^{p-1}\right) >0,
 ~\text{for all}~|x|\geq R.
$$
Define $\Sigma=\{|x|\geq R\}\cap\{\bar{v}^j_\varepsilon>\psi^j\}$, adopting the following inequality
 $$
 (|x|^{s-2}x-|y|^{s-2}y)\cdot(x-y)\geq0\text{ for all }s>1\text{ and }x,y\in\mathbb{R}^N
 $$
 and choosing $\phi=\max\{\bar{v}^j_\varepsilon-\psi^j,0\}\in W_0^{1,p}(\mathbb{R}^N\backslash B_R)\cap X$
 as a test function in
 $$
  -\Delta_{p}(\bar{v}^j_\varepsilon-\psi^j) +\frac{\hat{V} }{2}\big[(\bar{v}^j_\varepsilon)^{p-1}- (\psi^j)^{p-1} \big]\leq0,
 ~\text{for all}~|x|\geq R
 $$
 to conclude that
$$
0 \geq\int_{\Sigma} (|\nabla \bar{v}^j_\varepsilon|^{p-2}\nabla \bar{v}^j_\varepsilon-|\nabla\psi^j|^{p-2}\psi) \nabla\phi dx
 +\frac{\hat{V} }{2}\int_{\Sigma}\left[(\bar{v}^j_\varepsilon)^{p-1}-(\psi^j)^{p-1}) \right]\phi dx \\
 \geq0.
$$
Therefore, the set $\Sigma\equiv\emptyset$. From which, we know that $\bar{v}^j_\varepsilon\leq\psi^j(x)$ for all $|x|\geq R$ and
$$
\bar{v}^j_\varepsilon\leq\psi^j(x)=C^j_{0}\exp(-c^j_{0}|x|) \text{ for all }|x|\geq R.
$$
Exploiting Lemma A.2 again, $|\bar{v}^j_\varepsilon|_\infty\leq C$ and so
the above inequality holds true for the whole space $\R^N$ by increasing $C_0^j$ to be
large. The proof is completed.
\end{proof}



\bigskip


\begin{thebibliography}{99}

\bibitem{Agueh}
M. Agueh, Sharp Gagliardo-Nirenberg inequalities via $p$-Laplacian type equations, {\it NoDEA
Nonlinear Differential Equations Appl.,} {\bf15} (2008), 457--472.

\bibitem{Alves}
C.O. Alves, On existence of multiple normalized solutions to a class of elliptic problems in whole
$\R^N$, {\it Z. Angew. Math. Phys.,} {\bf73} (2022), no. 3, Paper No. 97, 17 pp.

\bibitem{Alves2}
C.O. Alves, I.S. da Silva, Existence of multiple solutions for a Schr\"{o}dinger logarithmic
equation via Lusternik-Schnirelmann category, {\it Anal. Appl. (Singap.)}
{\bf21} (2023), no. 6, 1477--1516.

\bibitem{Alves3}
C.O. Alves, I.S. da Silva, Existence of a positive solution for a class of Schr\"{o}dinger logarithmic equations on exterior domains,
To appear in
{\it Z. Angew. Math. Phys.,}  (2024).

\bibitem{AJ1}
C.O. Alves, C. Ji, Existence and concentration of positive solutions for a logarithmic
Schr\"{o}dinger equation via penalization method, {\it Calc. Var. Partial Differential Equations,}
{\bf59} (2020), art. 21, 27 pp.

\bibitem{AJ2}
C.O. Alves, C. Ji, Existence of a positive solution for a logarithmic Schr\"{o}dinger equation
with saddle-like potential, {\it Manuscripta Math.,} {\bf164} (2021), 555--575.

\bibitem{AJ3}
 C.O. Alves, C. Ji, Multi-bump positive solutions for a logarithmic Schr\"{o}dinger equation
with deepening potential well, {\it Sci. China Math.,} {\bf65} (2022), no. 8, 1577--1598.

\bibitem{Alves1}
C.O. Alves, C. Ji, Multiple normalized solutions to a logarithmic
Schr\"{o}dinger equation via Lusternik-Schnirelmann
category, arXiv:2307.01127.

\bibitem{AS}
C.O. Alves, L. Shen,
On existence of normalized solutions to a class of elliptic problems with $L^{2}$-supercritical growth,
{\it submitted.}

\bibitem{AT}
C.O. Alves, N.V. Thin, On existence of multiple normalized solutions to a class of elliptic
problems in whole $\R^N$ via Lusternik-Schnirelmann category, {\it SIAM J. Math. Anal.,} {\bf55}
(2023), no. 2, 1264--1283.

\bibitem{ABC}
 A. Ambrosetti, M. Badiale, S. Cingolani, Semiclassical states of nonlinear Schr\"{o}dinger equations,
 {\it Arch. Ration.
Mech. Anal.,} {\bf140} (1997), 285--300.

\bibitem{AM}
 A. Ambrosetti, A. Malchiodi, Concentration phenomena for nonlinear Schr\"{o}dinger equations: recent results and new
perspectives, in: H. Berestycki, M. Bertsch, F.E. Browder, L. Nirenberg, L.A. Peletier, L. V\'{e}ron (Eds.), Perspectives
in Nonlinear Partial Differential Equations, in: Contemp. Math., vol. 446, Amer. Math. Soc., Providence, RI, 2007,
pp. 19--30.

\bibitem{BartschSoave}
T. Bartsch, N. Soave, A natural constraint approach to normalized solutions of nonlinear
Schr\"{o}dinger equations and systems, {\it J. Funct. Anal.,} {\bf272} (2017), 4998--5037.
 
\bibitem{BF}
  V. Benci, D. Fortunato, Variational Methods in Nonlinear Field Equations. Solitary Waves, Hylomorphic Solitons
and Vortices, Springer Monographs in Mathematics, Springer, Cham, 2014, xviii+250 pp.

\bibitem{BC}
H. Berestycki, T. Cazenave, Instabilit\'{e} des \'{e}tats stationnaires dans les \'{e}quations de
Schr\^{o}dinger et de Klein-Gordon non lin\'{e}aires, {\it C. R. Acad. Sci., S\'{e}r. 1 Math.,} {\bf293}
(1981), 489--492.

\bibitem{Cazenave}
T. Cazenave, Stable solutions of the logarithmic Schr\"{o}dinger equation, {\it Nonlinear Anal.,}
{\bf7} (1983), 1127--1140.

\bibitem{CP}
T. Cazenave, P.-L. Lions, Orbital stability of standing waves for some nonlinear
Schr\"odinger equations, {\it Comm. Math. Phys.,} {\bf85} (1982), 549--561.

\bibitem{d'Avenia}
P. d'Avenia, E. Montefusco, M. Squassina, On the logarithmic Schr\"{o}dinger equation,
{\it Commun. Contemp. Math.,} {\bf16} (2014), no. 2, 1350032, 15 pp.

\bibitem{zenari}
P. d'Avenia, M. Squassina, M. Zenari,
On fractional logarithmic Schr\"odinger equations
{\it Math. Methods Appl. Sci.,} {\bf 38} (2015), 5207--5216.

\bibitem{DMS}
M. Degiovanni, A. Musesti, M. Squassina, On the regularity of solutions in the Pucci-Serrin identity,
{\it Calc. Var. Partial Differential Equations,} {\bf18} (2003), no. 3, 317--334.

\bibitem{ddb}
M. del Pino, J. Dolbeault, The optimal Euclidean $L^p$-Sobolev logarithmic
inequality, {\it J. Funct. Anal.,} {\bf 197} (2003), 151--161.

\bibitem{DF1}
M. del Pino, P. Felmer, Multipeak bound states of nonlinear Schr\"{o}dinger equations, {\it Ann. Inst. H. Poincar\'{e} Anal.
Non Lin\'{e}aire,} {\bf15} (1998), 127--149.

\bibitem{DF2}
  M. del Pino, P. Felmer, Local Mountain Pass for semilinear elliptic problems in unbounded domains, {\it  Calc. Var.
Partial Differential Equations,} {\bf4} (1996), 121--137.

\bibitem{DW1}
S. Deng, Q. Wu, Normalized solutions for $p$-Laplacian equation with critical Sobolev exponent and mixed nonlinearities,
arXiv:2306.06709.

\bibitem{DW2}
S. Deng, Q. Wu, Normalized solutions for $p$-Laplacian equations with potential, arXiv:2310.10510.

\bibitem{DiBenedetto}
 E. DiBenedetto, $C^{1,\gamma}$ local regularity of weak solutions of degenerate elliptic equations,
{\it Nonlinear Anal.,} {\bf7} (1985), 827--850.

\bibitem{Ding1}
Y.H. Ding, F.H. Lin, Solutions of perturbed Schr\"{o}dinger equations with critical nonlinearity, {\it Calc. Var. Partial
Differential Equations,} {\bf30} (2007), 231--249.

\bibitem{Ding2}
Y.H. Ding, X.Y. Liu, Semiclassical solutions of Schr\"{o}dinger equations with magnetic fields and critical nonlinearities,
{\it Manuscripta Math.,} {\bf140} (2013), 51--82.

\bibitem{Diaz}
J.I. D\'{i}az,  Nonlinear Partial Differential Equations and Free Boundaries: I. Elliptic Equations
Research Notes in Mathematics vol 106 (1985), Boston, MA: Pitman.

\bibitem{Ferriere}
G. Ferriere, Convergence rate in Wasserstein distance and semiclassical limit for the defocusing logarithmic Schr\"{o}dinger equation,
{\it Anal. PDE,} {\bf14} (2021), no. 2, 617--666.

\bibitem{Fibich}
  G. Fibich, The Nonlinear Schr\"{o}dinger Equation: Singular Solutions and Optical Collapse Applied Mathematical
Sciences, (2015), Berlin, \emph{Springer}.

\bibitem{FW}
 A. Floer, A. Weinstein, Nonspreading wave pachets for the packets for the cubic Schr\"{o}dinger with a bounded
potential, {\it J. Funct. Anal.,} {\bf69} (1986), 397--408.

\bibitem{Gilbarg}
D. Gilbarg, N.S. Trudinger, Elliptic Partial Differential Equations of Second Order, (2001), Berlin:
Springer.

\bibitem{Ikoma}
N. Ikoma, K. Tanaka, Z.-Q. Wang, C. Zhang, Semi-classical states for logarithmic Schr\"{o}dinger equations,
{\it Nonlinearity,} {\bf34} (2021), no. 4, 1900--1942.

\bibitem{Jeanjean1997}
L. Jeanjean, Existence of solutions with prescribed norm for semilinear elliptic equations, \textit{Nonlinear Anal.}, \textbf{28} (1997), 1633--1659.

\bibitem{JeanjeanLu1}
L. Jeanjean, S. Lu, Nonradial normalized solutions for nonlinear scalar field equations, {\it Nonlinearity,}
 {\bf 32} (2019), no. 12, 4942--4966.

\bibitem{JeanjeanLu2}
L. Jeanjean, S. Lu, A mass supercritical problem revisited,
{\it Calc. Var. Partial Differential Equations,} {\bf59} (2020), no. 5, Paper No. 174, 43 pp.

\bibitem{JeanjeanLu3}
L. Jeanjean, S. Lu, Normalized solutions with positive energies for a coercive problem and
application to the cubic-quintic nonlinear Schr\"{o}dinger equation, {\it Math. Models Methods Appl. Sci.,} {\bf32} (2022), no. 8, 1557--1588.

\bibitem{JT}
L. Jeanjean, K. Tanaka, Singularly perturbed elliptic problems with superlinear or asymptotically linear nonlinearities,
{\it Calc. Var. Partial Differential Equations,} {\bf21} (2004), 287--318.

\bibitem{LiXinfu}
X. Li, Existence of normalized ground states for the Sobolev critical Schr\"{o}dinger equation with
combined nonlinearities, {\it Calc. Var. Partial Differential Equations,} {\bf60} (2021), 1--14.

\bibitem{LiebLoss}
 E.H. Lieb, M. Loss, Analysis, in: Graduate Studies in Mathematics, AMS, Providence, Rhode island, 2001.

\bibitem{Mastorakis}
N. Mastorakis, H. Fathabadi,  On the solution of $p$-Laplacian for non-Newtonian fluid flow,
{\it WSEAS Trans. Math.,} {\bf8} (2009), 238--245.

\bibitem{Mederski}
M. Jaros{\l}aw, J. Schino,
Normalized solutions to Schr\"{o}dinger equations in the strongly sublinear regime,
arXiv:2306.06015.

\bibitem{Rabinowitz}
P. Rabinowitz, On a class of nonlinear Schr\"{o}dinger equations, {\it Z. Angew. Math. Phys.,} {\bf43} (1992), 270--291.

\bibitem{RR}
M.N. Rao, Z.D. Ren, Theory of Orlicz Spaces, Marcel Dekker, New York, 1985.

\bibitem{Shuai}
W. Shuai, Two sequences of solutions for the semilinear elliptic equations with logarithmic nonlinearities,
{\it J. Differential Equations,} {\bf343} (2023), 263--284.

\bibitem{SY}
W. Shuai, X. Yang, Normalized solutions for logarithmic Schr\"{o}dinger equation
with a perturbation of power law nonlinearity, arXiv:2304.08237.

\bibitem{Soave1}
 N. Soave, Normalized ground states for the NLS equation with combined nonlinearities,
 {\it  J. Differential Equations,} {\bf269} (2020), no. 9, 6941-6987.

\bibitem{Soave2}
 N. Soave, Normalized ground states for the NLS equation with combined nonlinearities: the Sobolev critical case,
 {\it J. Funct. Anal.,} {\bf279} (2020), no. 6, 108610, 43 pp.

\bibitem{Squassina}
M. Squassina, A. Szulkin, Multiple solutions to logarithmic Schr\"{o}dinger equations with periodic potential,
{\it Calc. Var. Partial Differential Equations,} {\bf54} (2015), no. 1, 585--597.

\bibitem{TVZ}
 T. Tao, M. Visan, X. Zhang, The nonlinear Schr\"{o}dinger equation with combined power-type nonlinearities,
{\it Commun. PDE,} {\bf32} (2007), 1281--1343.

\bibitem{Vazquez}
J.-L. V\'{a}zquez, A strong maximum principle for some quasilinear elliptic equations, {\it Appl. Math.
Optim.,} {\bf12} (1984), 191--202.

\bibitem{WLZL}
W. Wang, Q. Li, J. Zhou, Y. Li, Normalized solutions for $p$-Laplacian equations with a $L^2$-supercritical growth,
{\it  Ann. Funct. Anal.,} {\bf12} (2021), no. 1, Paper No. 9, 19 pp.

\bibitem{Wang}
  X. Wang, On concentration of positive bound states of nonlinear Schr\"{o}dinger equations, {\it Comm. Math. Phys.,} {\bf53}
(1993), 229--244.

\bibitem{WangZhang}
Z.-Q. Wang, C.X. Zhang, Convergence from power-law to logarithmic-law in nonlinear
scalar field equations, {\it Arch. Ration. Mech. Anal.,} {\bf231} (2019), 45--61.

\bibitem{WeiWu}
J. Wei, Y. Wu, Normalized solutions for Schr\"{o}dinger equations with critical Sobolev exponent
and mixed nonlinearities, {\it J. Funct. Anal.,} {\bf283} (2022), 1--46.

\bibitem{Weinstein}
M.I. Weinstein, Nonlinear Schr\"{o}dinger equations and sharp interpolation estimates, {\it Comm.
Math. Phys.,} {\bf87} (1982/83), 567--576.

\bibitem{Willem}
 M. Willem, Minimax Theorems, Birkh\"{a}user Verlag 1996.

\bibitem{ZLL}
J. Zhang, C. Lei, J. Lei, The existence and nonexistence of normalized solutions for a $p$-Laplacian equation,
{\it Appl. Math. Lett.,} {\bf148} (2024), 108890.

\bibitem{Zhang}
 Z. Zhang, Variational, Topological and Partial Order Methods with their Applications, (2013) Berlin, \emph{Springer}.

 \bibitem{ZZ}
 Z.X. Zhang, Z.T. Zhang, Normalized solutions to $p$-Laplacian equations with combined nonlinearities, {\it Nonlinearity,}
{\bf35} (2022), 5621--5663.

\bibitem{Zloshchastiev}
K.G. Zloshchastiev, Logarithmic nonlinearity in the theories of quantum gravity: origin
of time and observational consequences, {\it Grav. Cosmol.,} {\bf16} (2010), 288--297.


\end{thebibliography}
\end{document}